\theoremstyle{definition}
\newtheorem{theorem}{Theorem}[section]
\newtheorem{lemma}[theorem]{Lemma}
\newtheorem{corollary}[theorem]{Corollary}
\newtheorem{proposition}[theorem]{Proposition}
\newtheorem{definition}[theorem]{Definition}
\newtheorem{example}[theorem]{Example}
\newtheorem{remark}[theorem]{Remark}
\newcommand{\rr}{\mathbb{R}}
\newcommand{\rp}{\mathbb{R}P^1}
\newcommand{\cc}{\mathbb{C}}
\newcommand{\cp}{\mathbb{C}P^1}
\providecommand{\keywords}[1]
{
  \small	
  \textbf{\textit{Keywords:}} #1
}
\title{On the Algebraic Classification of Non-singular\\Flexible Kokotsakis Polyhedra}
\author{Yang Liu$^{1}$, Yi Ouyang$^{2}$, Dominik L. Michels$^{1}$  \\
        \small $^{1}$KAUST Computational Sciences Group \\
        \small \{yang.liu.4, dominik.michels\}@kaust.edu.sa \\
        \small $^{2}$USTC School of Mathematical Sciences \\
        \small yiouyang@ustc.edu.cn \\
        }
\date{January 24, 2024}
\begin{document}

\maketitle
\begin{abstract}
\noindent
Across various scientific and engineering domains, a growing interest in flexible and deployable structures is becoming evident. These structures facilitate seamless transitions between distinct states of shape and find broad applicability ranging from robotics and solar cells to meta-materials and architecture. In this contribution, we study a class of mechanisms known as Kokotsakis polyhedra with a quadrangular base. These are $3\times3$ quadrilateral meshes whose faces are rigid bodies and joined by hinges at the common edges. Compared to prior work, the quadrilateral faces do not have to be planar. In general, such meshes are not flexible, and the problem of finding and classifying the flexible ones is old, but until now largely unsolved. It appears that the tangent values of the dihedral angles between different faces are algebraically related through polynomials. Specifically, by fixing one angle as a parameter, the others can be parameterized algebraically and hence belong to an extended rational function field of the parameter. We use this approach to characterize shape restrictions resulting in flexible polyhedra.
\end{abstract}

\keywords{Factorization of resultant, field extension, flexible meshes, kinematic geometry, Kokotsakis polyhedron, spherical linkage.}\\

\section{Introduction}
Flexible geometric structures play a crucial role in various fields, including engineering, architecture, and material science, owing to their adaptability and versatility \cite{hoberman2005,dieleman2020,Callens2018,metamaterials-survey,ciang-2019-cf,Dudte2016,konakovic2016,konakovic2018,jamali2022,xiao2022}. These structures can deform and change shape in response to external forces, enabling them to navigate complex environments or absorb impacts. In engineering, flexible geometric structures are employed in robotics and machinery, enhancing the efficiency and agility of machines. In architecture, they offer innovative solutions for dynamic and responsive structures that can adapt to changing environmental conditions. Additionally, in material science, the development of flexible materials with geometric patterns opens up new possibilities for creating lightweight and durable materials with applications in aerospace, transportation, and beyond. The relevance of flexible geometric structures lies in their ability to provide adaptable solutions to diverse challenges, fostering advancements in technology and design.

Flexible and deployable structures may contain non-rigid parts \cite{flectofin,strelitzia12,Pillwein2020,Pillwein2020a,soriano2019g,panetta2019x,guseinov2017,malomo2018}. However, in this paper, we are only dealing with
mechanisms in the classical sense. They are formed by
rigid bodies connected by hinges. Our interest is in
flexible quadrilateral meshes, whose faces are rigid and joined by hinges at the common edges. It is well known
that such a quadrilateral mesh is a mechanism if and only if
any $3 \times 3$ sub-mesh is flexible. This has been
shown by Schief et al. \cite{Schief2008}, who formulate
the problem for planar faces, but planarity does not enter
their proof. Therefore, the first key step in the determination of all flexible quadrilateral meshes is the {\it classification
of all flexible $3 \times 3$ quadrilateral meshes}, which is the
main contribution of the present paper.

The problem of flexible polyhedra can be traced back to the last century. Very
similar mechanisms to the ones studied in our paper have 
been introduced by Kokotsakis \cite{kokotsakis33}. His
mechanisms possess a central planar, but not necessarily quadrilateral face, surrounded by a belt of planar faces.
The work of Kokotsakis inspired further research on such structures \cite{stachel2010}, in particular with a focus on 
origami \cite{Tac09a,tachi-2011-onedof,tachi-2013-composite}. 

A breakthrough in this area has been made by Izmestiev \cite{izmestiev-2017} who derived a complete classification of flexible $3 \times 3$ quadrilateral meshes with planar faces. It is, however, still unknown
which of these types can be parts of larger flexible $n \times m$ quadrilateral meshes. Larger flexible quadrilateral meshes with planar faces have
been studied by Schief et al. \cite{Schief2008} as discrete
integrable systems and  counterparts to isometric deformations of
conjugate nets on smooth surfaces.
Their results are largely rooted in second-order infinitesimal flexibility.
A wealth of results on first-order infinitesimal flexibility
 is found in the book
by Sauer \cite{sauer:1970}. There, we also find a detailed 
study of special quadrilateral meshes with planar faces that are
mechanisms. They come in two types. One type,
the so-called
Voss meshes, are discrete counterparts to surfaces
with a conjugate net of geodesics, first described
by Voss \cite{voss1888uber} (see also \cite{Schief2008}).
The other type is the
T-nets, first described by Graf and Sauer \cite{sauer-graf} (for a recent description
in English see \cite{t-nets-IASS}), are discrete counterparts to an
affine generalization of moulding surfaces. A very special case of T-nets is the famous Miura origami
structures \cite{miura-1980}. T-nets are also capable
of forming tubular flexible structures and flat-foldable metamaterials \cite{nawratil-t-nets2023}. The isometric deformations of T-nets
and their smooth counterparts have recently been studied by Izmestiev et al. \cite{izmestiev-t-nets}.

The first non-trivial example of a flexible Kokotsakis polyhedron with skew quadrilateral faces was constructed by Nawratil in 2022 \cite{nawratil-2022}. The question of existence has already been posed by Sauer \cite{sauer:1970}, but remained unsolved for a long time. 

In this paper, we are going to systematically study Kokotsakis polyhedra with a quadrangular base and arbitrary,
not necessarily planar faces. We refer to these geometric objects as skew-quadrilateral Kokotsakis meshes, but point out that planarity of faces is
included as a special case. We utilize the classical approach proposed by Bricard \cite{bricard1897}  to reformulate the flexibility problem in Euclidean space to one on the sphere. We can apply powerful methods from algebra to proceed with the polynomial system that arises after turning to new variables, namely tangents of adjacent dihedral half-angles. Algebraically, the condition for continuous flexion implies the existence of a one-parameter real-valued solution set of the polynomial system.

\subsection{Main approach and contributions}

Our main object is the mesh\footnote{In this article 'mesh' is always meant $3\times3$ quadrilateral mesh by default.} described in Fig.~\ref{mesh} left, which contains nine quadrilaterals linked by hinges that allow for linked units to rotate. The whole mesh is generally still rigid. We are dedicated to finding a classification for all flexible ones.  In fact, during flexion, the values of dihedral angles along hinges change dependently on each other. The motion can be described by zeros of polynomials and therefore, each mesh uniquely determines a polynomial ideal
$$
M=(\Tilde{g}^{(1)}(x_1,x_2), \Tilde{g}^{(2)}(x_2,x_3), \Tilde{g}^{(3)}(x_3,x_4), \Tilde{g}^{(4)}(x_4,x_1))\subset\cc[x_1,x_2,x_3,x_4]
$$
coming with a zero set
$$
Z(M):=\{(x_1,x_2,x_3,x_4)\in (\cp)^4: f(x_1,x_2,x_3,x_4)=0, \forall f \in M\}.\footnote{Kindly notice that $(\cp)^n\neq \mathbb{C} P^n$ in general. We consider projective varieties because $x_i$ actually stand for tangents of angles.}
$$
The flexibility of the mesh refers to an infinite $Z(M)$: a continuous trajectory of the motion. Notice that $Z(M)$ can be also obtained by the fiber product
\begin{equation}\label{fiber}
  Z(M)=Z(S_1)\times_{\{x_1, x_3\}} Z(S_2):=\{(x_i)_{i=1}^4: (x_1,x_2,x_3)\in Z(S_1), (x_1,x_4,x_3)\in Z(S_2)\}
\end{equation}
where the ideals $S_1=(\Tilde{g}^{(1)}, \Tilde{g}^{(2)})$ and $S_2=(\Tilde{g}^{(3)}, \Tilde{g}^{(4)})$.
The classification of flexible meshes (i.e. the ideal $M$) proceeds in three main steps:
\begin{itemize}
    \item [$\bullet$] Study the components of $Z(S_i)$ in Zariski topology;
    \item [$\bullet$] Classify $Z(S_i)$ according to its components;
    \item [$\bullet$] Investigate how to choose $S_i$ such that the fiber product in equation (\ref{fiber}) is an infinite set, hence the classification of flexible meshes can be conducted on $M$.
\end{itemize}

We only focus on non-singular meshes in which $Z(S_i)$ has no singular component. That is, regarding $Z(S_i)$ as a bunch of irreducible curves, none of them has a constant entry, e.g. 
$$
Z(x^2-xy)=Z(x)\cup Z(x-y)
$$
is singular since $Z(x)=\{(0,y): y\in \cc\}$ is singular. Non-singular meshes are the majority and in particular, each polynomial $\Tilde{g}^{(i)}$ of non-singular meshes has a very specific form. It is quadratic in both $x_i$ and $x_{i+1}$ and has no factors in $\cc[x_i]$ or $\cc[x_{i+1}]$. This means $Z(S_1)$ is a finite collection of irreducible curves (1-dimensional) and each curve can be \underline{locally} parameterized by one of the three coordinates, say $x_1$, so that $x_2, x_3$ are just algebraic functions of $x_1$ derived from $\Tilde{g}^{(1)}=\Tilde{g}^{(2)}=0$ and hence can be treated as algebraic elements of some extended field over the rational function field $K_i:=\cc(x_i)$. Given $\Tilde{g}^{(i)}$ quadratic, the 2-steps extension
$$
\xymatrix{
    K_1 \ar@{-}[r]^{\Tilde{g}^{(1)}} & K_1(x_2)\ar@{-}[r]^{\Tilde{g}^{(2)}} & K_1(x_2,x_3) & \text{or} & K_3 \ar@{-}[r]^{\Tilde{g}^{(2)}} & K_3(x_2)\ar@{-}[r]^{\Tilde{g}^{(1)}} & K_3(x_1,x_2)
    }
$$
only provides the following possibilities of extension degrees
$$
[K_1(x_2):K_1]\in \{1,2\},\,\,[K_1(x_3):K_1]\in\{1,2,4\},
$$
which leads to 6 situations:
\begin{itemize}
        \item \textbf{purely-rational}: if $[K_1(x_2):K_1]=[K_1(x_3):K_1]=1$, i.e. both $\Tilde{g}^{(i)}$ are reducible;
        \item \textbf{half-quadratic}: if $[K_1(x_2):K_1]=1$ and $[K_1(x_3):K_1]=2$, or, $[K_3(x_2):K_3]=1$ and $[K_3(x_1):K_3]=2$, i.e.
        only $\Tilde{g}^{(1)}$ is reducible or only $\Tilde{g}^{(2)}$ is reducible;
    \end{itemize} 
    Further, when both $\Tilde{g}^{(1)}, \Tilde{g}^{(2)}$ are irreducible,
    \begin{itemize}
        \item \textbf{involutive-rational}: if $[K_1(x_2):K_1]=2$ and $[K_1(x_3):K_1]=1$;
        \item \textbf{purely-quadratic}: if $[K_1(x_2):K_1]=[K_1(x_3):K_1]=2$ and $K_1(x_2)=K_1(x_3)$;
        \item \textbf{involutive-quadratic}: if $[K_1(x_2):K_1]=[K_1(x_3):K_1]=2$ but $K_1(x_2)\neq K_1(x_3)$;
        \item \textbf{quartic}: if $[K_1(x_3):K_1]=4$.
    \end{itemize} 
In the meanwhile, each component of $Z(S_1)$, since known as an irreducible algebraic curve, can be determined by its local behavior only! The above information therefore becomes a global invariant of the component and thus all components fall into one of 6 cases.

\begin{equation}\label{c-table}
    \begin{tabular}{ |c|c|c|c|c|c| }
        \hline
        \thead{purely- \\ rational} & \thead{half- \\ quadratic} & \thead{involutive- \\ rational} & \thead{purely- \\ quadratic} & \thead{involutive- \\ quadratic} & \thead{quartic} \\ 
        \hline
        \textbf{Case 1} & \textbf{Case 2} & \textbf{Case 3} & \textbf{Case 4} & \textbf{Case 5} & \textbf{Case 6}\\
        \hline
    \end{tabular} 
\end{equation}    
According to our theory, $S_i$ will fall into one of 7 classes, based on the components of $Z(S_i)$.
\begin{equation}\label{table}
\begin{NiceTabular}{|c|c|c|c|c|c|c|c|}[hvlines]
    \Block{2-1}{$S_i$} & \Block{2-1}{\thead{purely- \\ rational}} & \Block{2-1}{\thead{half- \\ quadratic}} & \Block{1-3}{equimodular} & & & \Block{2-1}{\thead{involutive- \\ quadratic}} & \Block{2-1}{\thead{quartic}} \\
    & & & \thead{involutive- \\ rational} & \thead{rational- \\ quadratic} & \thead{purely- \\ quadratic} & & \\
    \thead{comp. \\ types} & \textbf{Case 1} & \textbf{Case 2} & \textbf{Case 3} & \textbf{Case 3}, \textbf{4} & \textbf{Case 4} & \textbf{Case 5} & \textbf{Case 6}\\
    \end{NiceTabular}
\end{equation}
The term 'equimodular' was introduced by Izmestiev in \cite{izmestiev-2017}, which we adopted and generalized to non-planar meshes. It covers three classes of ours.
However, the classification of the mesh or $M$ is not simply developed by considering all combinations from table (\ref{table}). This is because the fiber product of $Z(S_1)$ and $Z(S_2)$ is simply a collection of the fiber products of their components, and to make $Z(M)$ an infinite set, some products of the components are finite and hence out of our interest. So the classification of the flexible mesh depends on the components $W_i$ of $Z(S_i)$ that we can choose to make $W_1\times_{\{x_1, x_3\}} W_2$ an infinite set, e.g. 
$$
Z(x^2-y^2)\times_{\{x,y\}}Z(x^2-3xy+2y^2)=(Z(x-y)\cup Z(x+y))\times_{\{x,y\}}(Z(x-y)\cup Z(x-2y))
$$
is infinite only because $Z(x-y)\times_{\{x,y\}}Z(x-y)$ is infinite.
In a nutshell, all flexible meshes are classified by those 'contributing' components of $Z(S_i)$.

\begin{theorem}[Main Theorem]\label{main}
Every $3\times3$ non-singular flexible mesh must belong to one of the following classes:
\begin{equation}\label{classes}
\textbf{simple classes}\left\{\begin{array}{l}
                                    \textbf{PR}: \text{purely-rational;} \\
                                    \textbf{HQ}: \text{half-quadratic;}  \\
                                    \textbf{IR}: \text{involutive-rational;}  \\
                                    \textbf{RQ}: \text{rational-quadratic;}  \\
                                    \textbf{PQ}: \text{purely-quadratic;} \\
                                    \textbf{IQ}: \text{involutive-quadratic;} \\
                                    \textbf{Q}: \text{quartic;} \\
                                \end{array}\right.
\textbf{hybrid classes}\left\{\begin{array}{l}
                                    \textbf{PR + IR};\\
                                    \textbf{HQ + IQ};\\
                                    \textbf{HQ + PQ};\\
                                    \textbf{PQ + IQ}.\\
                                \end{array}\right.
\end{equation}
\end{theorem}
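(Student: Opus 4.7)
The previous development reduces each irreducible component $W\subset Z(S_i)$ to one of the six cases of table~(\ref{c-table}), encoded by the field extension data $([K_1(x_2):K_1],[K_1(x_3):K_1])$. By the fiber product description~(\ref{fiber}), the mesh is flexible if and only if there exist components $W_1\subset Z(S_1)$ and $W_2\subset Z(S_2)$ whose fiber product $W_1\times_{\{x_1,x_3\}} W_2$ is infinite. The plan is to determine precisely which pairs of component-types $(T_1,T_2)$ can produce an infinite fiber product, and then to check that the resulting list matches the eleven classes in~(\ref{classes}) once the $S_i$-level description of table~(\ref{table}) is invoked.

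\textbf{Reduction to the $(x_1,x_3)$-projection.} Let $\pi$ denote the projection $(x_1,x_2,x_3)\mapsto(x_1,x_3)$. For a non-singular component $W$, $\pi|_W$ is a finite map onto an irreducible curve $\pi(W)\subset(\cp)^2$, and $W_1\times_{\{x_1,x_3\}}W_2$ is infinite exactly when $\pi(W_1)$ and $\pi(W_2)$ share an irreducible component. The $K_1$-degree of $\pi(W)$ equals $[K_1(x_3):K_1]$, which is $1$ in Cases~$1,3$; $2$ in Cases~$2,4,5$; and $4$ in Case~$6$. Two projected curves of distinct degrees cannot share a component, which immediately forbids every hybrid not listed in~(\ref{classes}), e.g.\ \textbf{PR + HQ}, \textbf{IR + IQ}, or any pairing with Case~$6$.

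\textbf{Case enumeration within matching degree.} Partition the admissible pairs by the common projection-degree. Degree~$1$ admits the same-type pairs $(1,1),(3,3)$ and the cross pair $(1,3)$, giving \textbf{PR}, \textbf{IR}, and the hybrid \textbf{PR + IR}. Degree~$2$ admits the same-type pairs $(2,2),(4,4),(5,5)$ giving \textbf{HQ}, \textbf{PQ}, \textbf{IQ}, together with the three cross pairs $(2,4),(2,5),(4,5)$ giving \textbf{HQ + PQ}, \textbf{HQ + IQ}, \textbf{PQ + IQ}. Degree~$4$ admits only $(6,6)$, giving \textbf{Q}. The simple class \textbf{RQ} arises from table~(\ref{table}): an equimodular $S_i$ can simultaneously carry Case~$3$ and Case~$4$ components, so a mesh inherits flexibility from both a degree-$1$ and a degree-$2$ matching at once and is neither pure \textbf{IR} nor pure \textbf{PQ}. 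Together these account for the seven simple and four hybrid classes.

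\textbf{Main obstacle.} The degree analysis supplies the necessary conditions; the bulk of the work is sufficiency, i.e.\ exhibiting a flexible mesh realising each listed class, together with ruling out ``accidental'' coincidences of $\pi(W_1)$ with $\pi(W_2)$ that would create an unlisted hybrid. The delicate step lies in the degree-$2$ block, where one must show that a Case~$2$ component can share its $(x_1,x_3)$-projection with a Case~$4$ or Case~$5$ component only under the precise coefficient restrictions that characterise the corresponding type in table~(\ref{table}); this typically requires a resultant-vanishing computation on the quadratic coefficients of $\tilde{g}^{(i)}$. The class \textbf{Q} should be the easiest, being isolated by degree, while the equimodular class \textbf{RQ} should be the subtlest, since one must construct a single $S_i$ carrying two distinct contributing component types at once.
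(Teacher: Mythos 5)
Your proposal is correct and takes essentially the same route as the paper: the paper likewise reduces flexibility to a common irreducible factor of the resultants $\Tilde{R}^{(1)},\Tilde{R}^{(2)}$ (Theorem \ref{flexequi}), observes via Theorem \ref{factor} and Corollary \ref{samedg} that each irreducible factor has equal degree $1$, $2$, or $4$ in $x_1$ and $x_3$ determined by the component case, and enumerates the degree-compatible pairings exactly as you do in Definition \ref{11class}. Your closing worry about ``unlisted hybrids'' is vacuous, since every within-degree pair already appears on the list, and realisability of each class is a separate matter not required by the theorem as stated.
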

By \textbf{simple classes} we mean $W_1, W_2$ belong to the same \textbf{Case} of table (\ref{c-table}), and by \textbf{hybrid classes} we mean the opposite. To avoid misunderstanding, we will use abbreviations such as \textbf{PR}, \textbf{PQ + IQ} etc. ONLY on meshes.

\subsection{Organization of the paper}

In Section \ref{setup} we introduce non-singular couplings. Roughly speaking, a coupling is just a polynomial ideal $S_1=(\Tilde{g}^{(1)}(x_1,x_2), \Tilde{g}^{(2)}(x_2,x_3))$ and also the building block of the mesh. We show how to convert a mesh to an ideal $M\subset \cc[x_1,x_2,x_3,x_4]$ such that
$$
M=(\Tilde{g}^{(1)}(x_1,x_2), \Tilde{g}^{(2)}(x_2,x_3))+(\Tilde{g}^{(3)}(x_3,x_4), \Tilde{g}^{(4)}(x_4,x_1))
$$
is the sum of two couplings. To study non-singular couplings, we propose a brand new method that is based on the decomposition of the zero set $Z(S_1)$. 

In Section \ref{Component}, the components are classified into 6 cases. We also provide a construction theorem of the simplest class of flexible meshes as a tutorial for our new classification method.

We further split the non-singular couplings into two types which are studied in Section \ref{pseudo} and \ref{General} respectively. This will explain how $S_i$ can be classified by table (\ref{table}).

In Section \ref{Res}, we use previous results to factorize the resultant $\text{Res}(\Tilde{g}^{(1)},\Tilde{g}^{(2)}; x_2)$ to get ready for the classification of flexible meshes.

Finally in Section \ref{back-to-com}, we consider the construction of flexible meshes. Starting from a given half $S_1=(\Tilde{g}^{(1)}, \Tilde{g}^{(2)})$, we find restrictions for the other half $S_2=(\Tilde{g}^{(3)}, \Tilde{g}^{(4)})$ in order to form a flexible mesh, i.e. to ensure that $M=S_1+S_2$ has an infinite zero set $Z(M)$. This will tell why Theorem \ref{main} covers all possibilities.

We delayed most of the proofs to Appendix \ref{techpf} so that the readers can have quick access to the theory. The remaining proofs are important to comprehending some main ideas. One can also download a Maple\texttrademark\,\,script\footnote{\url{https://drive.google.com/file/d/10yn_3bVrJNWD-RsXa-ltJqJvaDwgghD5/view?usp=sharing}} to verify the calculations of the proofs.

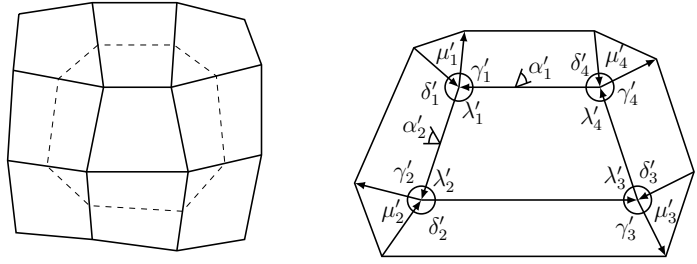
\begin{figure}
    \centering
    \scalebox{0.75}{
 \begin{tikzpicture}[>=latex]

        \coordinate (A1) at (0.2, 4.3);
        \coordinate (A2) at (0.1, 3.3);
        \coordinate (A3) at (0, 1.7);
        \coordinate (A4) at (0.15, 0.42);
        \coordinate (A5) at (1.5, 0.3);
        \coordinate (A6) at (3, 0.1);
        \coordinate (A7) at (4.2, 0.3);
        \coordinate (A8) at (4.5, 1.8);
        \coordinate (A9) at (4.5, 3.4);
        \coordinate (A10) at (4.2, 4.2);
        \coordinate (A11) at (3, 4.5);
        \coordinate (A12) at (1.5, 4.5);

        \coordinate (B1) at (1.7, 3);
        \coordinate (B2) at (1.4, 1.5);
        \coordinate (B3) at (3.2, 1.5);
        \coordinate (B4) at (2.9, 3);

        \coordinate (C1) at (1.6,3.75);
        \coordinate (C2) at (0.9,3.15);
        \coordinate (C3) at (0.7,1.6);
        \coordinate (C4) at (1.45,0.9);
        \coordinate (C5) at (3.1,0.8);
        \coordinate (C6) at (3.85,1.65);
        \coordinate (C7) at (3.7,3.2);
        \coordinate (C8) at (2.95,3.75);

        \draw[thick] (A1) -- (A2) -- (A3) --(A4) --(A5) --(A6) --(A7) --(A8) --(A9) --(A10) --(A11) --(A12) --(A1);
        \draw[thick] (B1) -- (B2) -- (B3) --(B4) --(B1);
        \draw[dashed] (C1) -- (C2) -- (C3) --(C4) --(C5) --(C6) --(C7) --(C8) --(C1);
        
        \draw[thick] (B1) -- (A12);
        \draw[thick] (B1) -- (A2);
        
        \draw[thick] (B2) -- (A3);
        \draw[thick] (B2) -- (A5);
        
        \draw[thick] (B3) -- (A6);
        \draw[thick] (B3) -- (A8);

        \draw[thick] (B4) -- (A9);
        \draw[thick] (B4) -- (A11);

        \coordinate (a3) at (11.166667, 1);
        \coordinate (a2) at (7.333333, 1);
        \coordinate (a1) at (8, 3);
        \coordinate (a4) at (10.5, 3);
        \coordinate (b3) at (12.166667, 1.5);
        \coordinate (c3) at (11.666667, 0);
        \coordinate (b2) at (6.633333, 0);
        \coordinate (c2) at (6.15, 1.3);
        \coordinate (b1) at (7.2, 3.7);
        \coordinate (c1) at (8.1, 4);
        \coordinate (b4) at (10.4, 4);
        \coordinate (c4) at (11.5, 3.5);

        \draw[thick] (b3) -- (c3) -- (b2) -- (c2) -- (b1) -- (c1) -- (b4) -- (c4) -- cycle;
        
        \draw[thick,<-] (a3) -- (a2);
        \draw[thick,<-] (a2) -- (a1);
        \draw[thick,<-] (a1) -- (a4);
        \draw[thick,<-] (a4) -- (a3);
        
        \draw[thick,<-] (a3) -- (b3);
        \draw[thick,->] (a3) -- (c3);
        
        \draw[thick,<-] (a2) -- (b2);
        \draw[thick,->] (a2) -- (c2);
        
        \draw[thick,<-] (a1) -- (b1);
        \draw[thick,->] (a1) -- (c1);

        \draw[thick,<-] (a4) -- (b4);
        \draw[thick,->] (a4) -- (c4);

        \draw[thick] (a3) circle (7pt);
        \draw (a3)+(-0.4, 0.4) node {$\lambda_3'$};
        \draw (a3)+(0.2, 0.5) node {$\delta_3'$};
        \draw (a3)+(0.5, -0.2) node {$\mu_3'$};
        \draw (a3)+(-0.2, -0.5) node {$\gamma_3'$};

        \draw[thick] (a2) circle (7pt);
        \draw (a2) + (0.4, 0.35) node {$\lambda_2'$};
        \draw (a2) + (-0.3, 0.5) node {$\gamma_2'$};
        \draw (a2) + (-0.5, -0.2) node {$\mu_2'$};
        \draw (a2) + (0.3, -0.5) node {$\delta_2'$};

        \draw[thick] (a1) circle (7pt);
        \draw (a1) + (0.25, -0.45) node {$\lambda_1'$};
        \draw (a1) + (0.4, 0.3) node {$\gamma_1'$};
        \draw (a1) + (-0.2, 0.6) node {$\mu_1'$};
        \draw (a1) + (-0.5, -0.1) node {$\delta_1'$};
        
        \draw[thick] (a4) circle (7pt);
        \draw (a4) + (-0.15, -0.55) node {$\lambda_4'$};
        \draw (a4) + (0.3, 0.55) node {$\mu_4'$};
        \draw (a4) + (-0.35, 0.4) node {$\delta_4'$};
        \draw (a4) + (0.5, -0.1) node {$\gamma_4'$};
        
         \coordinate (alpha1) at (9, 3);
        \draw[thick, rotate around = {-20:(alpha1)}] (alpha1) -- + (0, 0.35);
        \draw[thick, rotate around = {20:(alpha1)}] (alpha1) -- + (0.3, 0);
        \draw[thick] (alpha1) + (0.2, .07) arc (0:60:6pt);
        \draw (alpha1) + (.45, .35) node {$\alpha_1'$};
        
        \coordinate (psi3) at (7.66667, 2);
        \draw[thick, , rotate around = {30:(psi3)}] (psi3) -- + (0, 0.35);
        \draw[thick, , rotate around = {0:(psi3)}] (psi3) -- + (-0.3, 0);
        \draw[thick] (psi3) + (-0.2, 0) arc (180:141:10pt);
        \draw (psi3) + (-0.45, 0.3) node {$\alpha_2'$};
        
    \end{tikzpicture}
}    
    \caption{Left: Sketch of a $3\times3$ quadrilateral mesh, or equivalently, Kokotsakis polyhedron with a quadrangular base. The flexibility only relies on a smaller area marked in dashed lines; Right: A flexible mesh with fixed angles $\lambda_i', \gamma_i', \mu_i', \delta_i'$, which are well-defined by vector products and $\arccos$ function. The flexible angles $\alpha_i'$ and its complement (not shown) $\alpha_i=\pi-\alpha_i'$ are dihedral angles between the central planar panel and surrounding planar panels, which are rigorously defined in Appendix \ref{dfnang}.}
    \label{mesh}
\end{figure}

\section{Basic setup}\label{setup}

\subsection{From $\rr^3$ to sphere}

In this subsection, we introduce the classic approach which has been used extensively among peers.

\subsubsection{Bricard equation}

\begin{figure}
    \scalebox{0.75}{
    \begin{tikzpicture}[>=latex]           %
        \coordinate (A3) at (11.5, 4.5);
        \coordinate (A2) at (6.5, 4.5);
        \coordinate (A1) at (7.5, 7.5);
        \coordinate (A4) at (10.5, 7.5);
        \coordinate (B3) at (12.5, 5);
        \coordinate (C3) at (12, 3.5);
        \coordinate (B2) at (5.8, 3.5);
        \coordinate (C2) at (5.65, 4.8);
        \coordinate (B1) at (6.7, 8.2);
        \coordinate (C1) at (7.6, 8.5);
        \coordinate (B4) at (10.4, 8.5);
        \coordinate (C4) at (11.5, 8);

        \draw[thick,dashed] (A2) -- (C3) -- (B3) -- (A4) -- (C1) -- (B1) -- cycle;
        \draw[thick] (A1) -- (C2) -- (B2) -- (A3) -- (C4) -- (B4) -- cycle;
        \draw[thick] (B3) -- (C3) -- (B2) -- (C2) -- (B1) -- (C1) -- (B4) -- (C4) -- cycle;
        
        \draw[thick,<-] (A3) -- (A2);
        \draw[thick,<-] (A2) -- (A1);
        \draw[thick,<-] (A1) -- (A4);
        \draw[thick,<-] (A4) -- (A3);
        \draw[thick] (A1) -- (A3);
        \draw[thick,dashed] (A4) -- (A2);
        
        \draw[thick,<-] (A3) -- (B3);
        \draw[thick,->] (A3) -- (C3);
        
        \draw[thick,<-] (A2) -- (B2);
        \draw[thick,->] (A2) -- (C2);
        
        \draw[thick,<-] (A1) -- (B1);
        \draw[thick,->] (A1) -- (C1);

        \draw[thick,<-] (A4) -- (B4);
        \draw[thick,->] (A4) -- (C4);

        \draw[thick,dashed] (A3) circle (7pt);
        \draw (A3) + (-0.4, 0.6) node {$\lambda_3'$};
        \draw (A3) + (0.3, 0.6) node {$\delta_3'$};
        \draw (A3) + (0.5, -0.2) node {$\mu_3'$};
        \draw (A3) + (-0.2, -0.5) node {$\gamma_3'$};

        \draw[thick] (A2) circle (7pt);
        \draw (A2) + (0.4, 0.35) node {$\lambda_2'$};
        \draw (A2) + (-0.3, 0.5) node {$\gamma_2'$};
        \draw (A2) + (-0.5, -0.2) node {$\mu_2'$};
        \draw (A2) + (0.3, -0.5) node {$\delta_2'$};

        \draw[thick,dashed] (A1) circle (7pt);
        \draw (A1) + (0.2, -0.45) node {$\lambda_1'$};
        \draw (A1) + (0.35, 0.4) node {$\gamma_1'$};
        \draw (A1) + (-0.2, 0.6) node {$\mu_1'$};
        \draw (A1) + (-0.5, -0.1) node {$\delta_1'$};
        
        \draw[thick] (A4) circle (7pt);
        \draw (A4) + (-0.1, -0.6) node {$\lambda_4'$};
        \draw (A4) + (0.3, 0.55) node {$\mu_4'$};
        \draw (A4) + (-0.45, 0.5) node {$\delta_4'$};
        \draw (A4) + (0.6, -0.2) node {$\gamma_4'$};

        \draw[thick, dashed] (6.97, 5.9) -- (6.6, 6);
        \draw[thick] (6.97, 5.9) -- (6.9, 6.2);
        \draw[thick] (6.72, 5.97) arc(170:132:10pt);
        \draw (7.2, 5.9) node {$\zeta_1$};

        \draw[thick] (7.17, 6.5) -- (7.5, 6.7);
        \draw[thick, dashed, rotate around ={-25:(7.17, 6.5)} ] (7.17, 6.5) -- (7.5, 6.5);
        \draw[thick] (7.4, 6.4) arc (-20:20:10pt);
        \draw (7.7, 6.5) node {$\tau_1$};

        \coordinate (x3) at (18, 4);
        \coordinate (x2) at (13, 4);
        \coordinate (x1) at (14, 7);
        \coordinate (x4) at (17, 7);
        \coordinate (y3) at (19, 4.9);
        \coordinate (z3) at (18.1, 4.7);
        \coordinate (y2) at (12.3, 3);
        \coordinate (z2) at (12.15, 4.3);
        \coordinate (y1) at (13.2, 7.7);
        \coordinate (z1) at (14.1, 8);
        \coordinate (y4) at (16.9, 8);
        \coordinate (z4) at (18, 7.5);

        \draw[thick] (x3) -- (x2) -- (x1) -- (x4) -- cycle;
        \draw[thick,dashed] (y3) -- (x4);
        \draw[thick,dashed] (z3) -- (x2);
        \draw[thick,dashed] (y1) -- (x2);
        \draw[thick,dashed] (z1) -- (x4);
        \draw[thick,dashed] (x2) -- (x4);
        
        \draw[thick] (y3) -- (z3);
        \draw[thick] (y1) -- (z1);
        \draw[thick] (x3) -- (y3);
        \draw[thick] (x3) -- (z3);
    
        \draw[thick] (x1) -- (y1);
        \draw[thick] (x1) -- (z1);

        \coordinate (phi1) at (17.5, 4);
        \draw[thick, rotate around = {30:(phi1)}] (phi1) -- + (-0.3,0);
        \draw[thick] (phi1) -- + (0, 0.3);
        \draw[thick] (phi1) + (-0.15, -0.07) arc (210:95:5pt);
        \draw (phi1) + (-0.4,0.25) node {$\alpha_3'$};

        \coordinate (phi2) at (17.6, 5.2);
        \draw[thick, rotate around = {60:(phi1)}] (phi2) -- + (0.4, 0);
        \draw[thick] (phi2) -- + (0.4, 0);
        \draw[thick] (phi2)+(0.3,0) arc (0:51:10pt);
        \draw (phi2) + (0.5,0.3) node {$\beta_3'$};

        \coordinate (phi4) at (14.5, 7);
        \draw[thick, rotate around = {20:(phi4)}] (phi4) -- + (0.4, 0);
        \draw[thick, rotate around = {70:(phi4)}] (phi4) -- + (0.5, 0);
        \draw[thick] (phi4) -- + (0.5, 0);
        \draw[thick] (phi4)+(0.3,0.1) arc (0:100:5pt);
        \draw (phi4) + (0.4,0.45) node {$\alpha_1'$};
        
        \draw[thick,dashed] (x1) circle (7pt);

        \coordinate (phi3) at (13.77, 6.3);
        \draw[thick, rotate around = {-70:(phi3)}] (phi3) -- + (-0.4,0);
        \draw[thick] (phi3) -- + (-0.35, 0);
        \draw[thick] (phi3) + (-0.25, 0) arc (180:125:10pt);
        \draw (phi3) + (-0.4,0.4) node {$\beta_1'$};

        \coordinate (a3) at (25, 4);
        \coordinate (a2) at (20, 4);
        \coordinate (a1) at (21, 7);
        \coordinate (a4) at (24, 7);
        \coordinate (b3) at (25, 4.5);
        \coordinate (c3) at (25.5, 3);
        \coordinate (b2) at (21, 4.7);
        \coordinate (c2) at (19.4, 5.2);
        \coordinate (b1) at (20.2, 7.7);
        \coordinate (c1) at (21.1, 8);
        \coordinate (b4) at (23.9, 8);
        \coordinate (c4) at (25, 7.5);

        \draw[thick] (a3) -- (a2) -- (a1) -- (a4) -- cycle;
        
        \draw[thick] (a1) -- (a3);
        \draw[thick] (a2) -- (b2);
        \draw[thick] (a2) -- (c2);
        \draw[thick] (a4) -- (b4);
        \draw[thick] (a4) -- (c4);
        \draw[thick] (a3) -- (c4); 
        \draw[thick] (a3) -- (b2);
        \draw[thick] (c2) -- (a1);
        \draw[thick] (b4) -- (a1);
        \draw[thick] (b2) -- (c2);
        \draw[thick] (b4) -- (c4);

        \coordinate (ps2) at (20.7, 4);
        \draw[thick, , rotate around = {-40:(ps2)}] (ps2) -- + (0, 0.35);
        \draw[thick, rotate around = {-40:(ps2)}] (ps2) -- + (0.35, 0);
        \draw[thick] (ps2) + (0.2, -0.15) arc (-40:50:7pt);
        \draw (ps2) + (0.55, 0.25) node {$\beta_2'$};

        \coordinate (ps3) at (20.4, 5.2);
        \draw[thick, , rotate around = {30:(ps3)}] (ps3) -- + (0, 0.35);
        \draw[thick, , rotate around = {0:(ps3)}] (ps3) -- + (-0.3, 0);
        \draw[thick] (ps3) + (-0.2, 0) arc (180:141:10pt);
        \draw (ps3) + (-0.45, 0.3) node {$\alpha_2'$};

        \coordinate (ps1) at (24.2, 6.4);
        \draw[thick, rotate around = {-50:(ps1)}] (ps1) -- + (0, 0.35);
        \draw[thick, rotate around = {-30:(ps1)}] (ps1) -- + (0.3, 0);
        \draw[thick] (ps1) + (0.2, -0.1) arc (-30:36:7pt);
        \draw (ps1) + (.55, .1) node {$\alpha_4'$};

        \coordinate (ps4) at (23, 7);
        \draw[thick, rotate around = {-20:(ps4)}] (ps4) -- + (0, 0.35);
        \draw[thick, rotate around = {20:(ps4)}] (ps4) -- + (0.3, 0);
        \draw[thick] (ps4) + (0.2, .07) arc (0:60:6pt);
        \draw (ps4) + (.45, .35) node {$\beta_4'$};

        \draw[thick] (a4) circle (7pt);
     \end{tikzpicture}
     }
    \caption{A non-planar flexible mesh (left) and its decomposition (middle and right). $(\lambda_i', \gamma_i', \mu_i', \delta_i')$ are angles between corresponding oriented edges; $\tau_i, \zeta_i$ are fixed dihedral angles along hinges of the central tetrahedron and the attached tetrahedron respectively; $\alpha_i', \beta_i'$ are flexible dihedral angles (in planar case $\alpha_{i+1}'=\beta_i'$). All angles can be defined in a similar way (see Appendix \ref{dfnang}).}
    \label{sqmesh}
\end{figure}
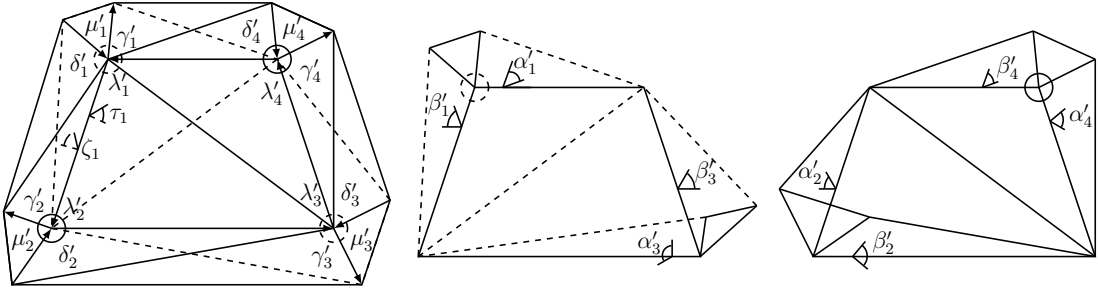

\begin{figure}[t]
    \begin{minipage}{0.5\textwidth}
          \scalebox{0.55}{  
      \begin{tikzpicture}[>=latex]
     \draw[ultra thick] (0.5, 4) arc[start angle=210, end angle = 330, x radius = 3cm, y radius = 0.7cm]  node[below, midway] {$\lambda_1$};
        \draw[ultra thick] (1.35, 7) arc [start angle=140, end angle=220,x radius = 2cm, y radius = 2.5cm] node[left, midway] {$\gamma_1$};
        \draw[ultra thick] (4.7, 7) arc [start angle=40, end angle=-55,x radius = 2cm, y radius = 2.5cm] node[right, pos = 0.4] {$\delta_1$};
        \draw[ultra thick] (4.7,7) arc [start angle=50, end angle=130,x radius =2.6cm, y radius = 2cm] node[above, midway] {$\mu_1$};
        \draw[ultra thick] (4.85, 4.1) arc[start angle = 95, end angle = 165,radius=0.5cm] node[above left,pos=0.1] {$\beta_1$};
        \draw[ultra thick] (1.2, 4.1) arc[start angle = 80, end angle = 18,radius=0.5cm] node[right, pos = 0.3] {$\alpha_1$};

        \draw[ultra thick, rotate around={-10:(4.7,3.75)}] (4.65, 3.75) arc [start angle=210, end angle = 330, x radius = 3cm, y radius = 0.7cm] node[above, midway] {$\lambda_2$};
        \draw[ultra thick, rotate around = {19:(6,1.2)}] (6,1.2) arc [start angle=-134, end angle=-205,x radius = 2cm, y radius = 2.5cm] node[left, pos = 0.35] {$\gamma_2$};
        \draw[ultra thick, rotate around = {0:(8.35,3.1)}] (8.35,3.1) arc [start angle=40, end angle=-20,x radius = 1.5cm, y radius = 2cm] node[right, pos = 0.55] {$\delta_2$};
        \draw[ultra thick, rotate around = {-2:(6,-0.8)}] (5.92,1.2) arc [start angle=230, end angle=312,x radius = 2cm, y radius = 1cm] node[below, midway] {$\mu_2$};
        \draw[thick] (5.15, 3.55) arc[start angle = 0, end angle = 9,radius=2cm] node[right, pos = 0.5] {$\tau_1$};
        \draw[thick] (4.7, 3.15) arc[start angle = -110, end angle = -135, radius=1cm] node[below, pos = 0.7] {$\zeta_1$};
        \draw[ultra thick] (4.7, 3.3) arc[start angle = 270, end angle =325 ,radius=0.5cm] node[below right, pos=0.1] {$\alpha_2$};
    \end{tikzpicture}
}
    \end{minipage}
    \begin{minipage}{0.8\textwidth}
    \begin{overpic}[width=0.4\textwidth]{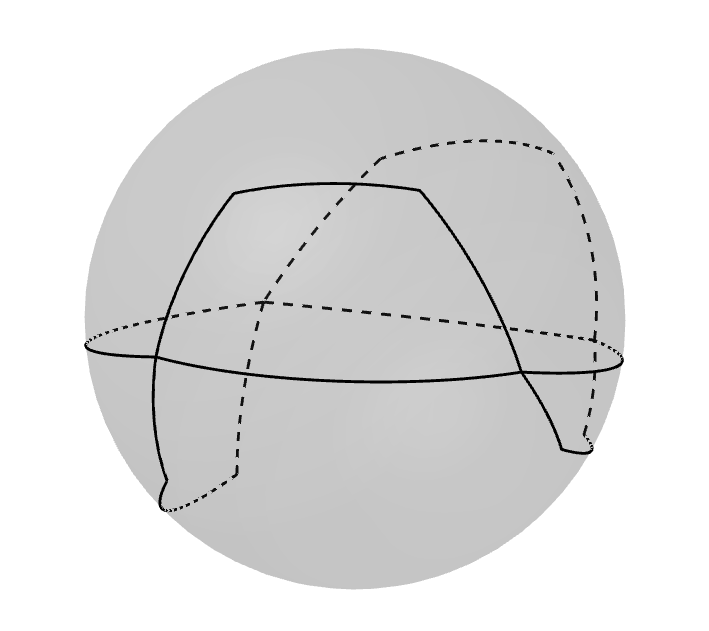}
    \put(41,30){\footnotesize\contour{white}{$\lambda_1$}}
    \put(38,64.5){\footnotesize\contour{white}{$\mu_1$}}
    \put(19,49.5){\footnotesize\contour{white}{$\gamma_1$}}    
    \put(61.5,49){\footnotesize\contour{white}{$\delta_1$}}
    \put(62,70.5){\footnotesize\contour{white}{$\mu_3$}}
    \put(71,28){\footnotesize\contour{white}{$\gamma_2$}}
    \put(16,26){\footnotesize\contour{white}{$\delta_4$}}
    \put(88,36){\footnotesize\contour{white}{$\lambda_2$}}
    \put(52,40){\footnotesize\contour{white}{$\lambda_3$}}
    \put(5.5,39){\footnotesize\contour{white}{$\lambda_4$}}
    \end{overpic}
    \end{minipage}
    \caption{Left: Partial and whole spherical linkage of a mesh in Fig.~\ref{sqmesh}, $(\lambda_i, \gamma_i, \mu_i, \delta_i,\alpha_i,\beta_i)$ and $(\lambda_i', \gamma_i', \mu_i', \delta_i',\alpha_i',\beta_i')$ are complementary to $\pi$ respectively, the gap between $\beta_1$ and $\alpha_2$ is caused by $\tau_1$ and $\zeta_1$.}
    \label{squads}
\end{figure}

From Fig.~\ref{mesh} left, it is obvious that the flexibility of a mesh is only determined by the structure of a neighborhood of the central quadrilateral. So without loss of generality, we can trim the mesh such that its corner panels are just triangles. Fig.~\ref{mesh} right is an illustration of a flexible mesh with 9 planar quadrilaterals where $(\lambda_i', \gamma_i', \mu_i', \delta_i')$ are fixed angles determined by rigid quadrilaterals and $\alpha_i'$ are flexible (dihedral) angles between the quadrilaterals while the mesh is deforming. Similarly, if the mesh contains 9 non-planar quadrilaterals, each quadrilateral can be treated as a rigid tetrahedron (consider a diagonally folded quadrilateral vs. a tetrahedron with a missing edge). Hence Fig.~\ref{mesh} right can be generalized to Fig.~\ref{sqmesh} left with a $3\times 3$ tetrahedron mesh.

The classical approach to analyze the flexibility is to consider a spherical quadrilateral at each vertex of the central face, which is defined by oriented edges: In Fig.~\ref{sqmesh}, by collecting all vectors at the origin, the directions (of vectors) intersect the unit sphere and determine the \textbf{spherical linkage} of the mesh, see Fig.~\ref{squads} right, which contains 4 spherical quads.\footnote{To distinguish the planar quadrilateral, we will call spherical quadrilateral just quad.} Every 'edge' of each quad is an arc of a great circle. Particularly, each quad of the lingkage, say $(\lambda_1, \gamma_1, \mu_1, \delta_1)$, is associated with the corner surrounded by $(\lambda_1', \gamma_1', \mu_1', \delta_1')$ in Fig.~\ref{sqmesh}. It is easy to see that $(\lambda_i, \gamma_i, \mu_i, \delta_i)$ and $(\lambda_i', \gamma_i', \mu_i', \delta_i')$ are complementary to $\pi$ respectively. Angles $\tau_i,\zeta_i$ are (interior) dihedral angles in the corresponding tetrahedrons. All these angles will be rigorously defined in Appendix \ref{dfnang}. In summary, the mesh vs. its spherical linkage, angles between edges are transformed into arcs of the quads, angles between planes are transformed into angles between arcs, and flexible hinges on adjacent quadrilaterals are transformed into flexible joints between adjacent arcs. Hence the flexibility of the mesh is equivalent to the flexibility of the spherical linkage with the deformation rules:
\begin{itemize}
    \item [$\bullet$] All arc lengths of each quad are fixed;
    \item [$\bullet$] All angles between $\lambda_i$ and $\lambda_{i+1}$ are fixed;
    \item [$\bullet$] All angles between $\delta_i$ and $\gamma_{i+1}$ are fixed.
\end{itemize}

According to Fig.~\ref{squads} left, by setting
$$
x_i=\tan\left(\frac{\alpha_i}{2}\right),\,\, y_i=\tan\left(\frac{\beta_i}{2}\right),\,\,F_i=\tan\left(\frac{\zeta_{i}+\tau_{i}}{2}\right)
$$ 
we have an invertible relation between $y_i$ and $x_{i+1}$: $y_i=\frac{F_i+x_{i+1}}{1-F_ix_{i+1}}$, or
$$
H^{(i)}(y_i,x_{i+1})=y_i(1-F_ix_{i+1})-(F_i+x_{i+1})=0
$$
where $i=1, 2, 3, 4$ and $x_5=x_1$. Since we are dealing with angles, it might occur $\zeta_{i}+\tau_{i}=\pi$ and hence $y_i=-\frac{1}{x_{i+1}}$. In such case, we regard $F_i=\infty$ and set 
$$
H^{(i)}(y_i,x_{i+1})=y_i x_{i+1} + 1=0.
$$
When $\lambda_i, \gamma_i, \mu_i, \delta_i\in(0,\pi)$, the arc lengths and angles of the quad in Fig.~\ref{squads} left satisfy a polynomial equation during its deformation. That means when the quad changes its shape while all arcs are attaching the sphere and remaining the same lengths, the valves of the interior angles $\alpha_i, \beta_i$ must obey the so-called Bricard equation \cite{bricard1897} (the proof can be also found in \cite{stachel2010}).

\begin{equation}\label{bricard}
\left\{\begin{array}{l}
    A_ix_i^2 y_i^2  + B_i x_i^2 + C_iy_i^2 + D_ix_iy_i + E_i = 0, \text{ or} \\
    g^{(i)}(x_i,y_i)=a_ix_i^2 y_i^2  + b_i x_i^2 + c_i y_i^2 + x_iy_i + e_i=0 \\
    \end{array}\right. \text{ where }
    \left\{\begin{array}{l}
    A_i=\cos(\lambda_i+ \gamma_i + \delta_i ) - \cos(\mu_i), \\
    B_i=\cos(\lambda_i+ \gamma_i - \delta_i ) - \cos(\mu_i), \\
    C_i=\cos(\lambda_i- \gamma_i + \delta_i ) - \cos(\mu_i), \\
    D_i=4\sin(\gamma_i)\sin(\delta_i), \\
    E_i=\cos(\lambda_i- \gamma_i - \delta_i ) - \cos(\mu_i), \\
    (a_i, b_i, c_i, e_i)=\left(\frac{A_i}{D_i},\frac{B_i}{D_i},\frac{C_i}{D_i},\frac{E_i}{D_i}\right).\\
    \end{array}\right.
\end{equation}

Now all flexible angles $\alpha_i, \beta_i$ (as known as $x_i,y_i$) are related by eight polynomial equations $\{H^{(i)}=g^{(i)}=0\}_{i=1}^4$. However, $y_i$ becomes redundant after the elimination
$$
\Tilde{g}^{(i)}(x_i,x_{i+1})=\text{Res}(g^{(i)},H^{(i)}; y_i)=0.
$$

\begin{remark}\label{recover}
    The coefficients $(a_i, b_i, c_i, e_i)$ could be mapped back to the angles. Since
    $$
    b_i+c_i-a_i-e_i=\cos(\lambda_i),\,\,b_i-c_i-a_i+e_i=\sin(\lambda_i) \cot(\gamma_i),\,\,c_i+e_i-a_i-b_i=\sin(\lambda_i) \cot(\delta_i),
    $$
    we can get $\lambda_i, \gamma_i, \delta_i$ immediately and $\mu_i$ can be recovered from the definition of $A_i$. In general, there is no guarantee that the angles exist for arbitrary coefficients. Thus, we will develop our theory disregarding reality and embeddability into $\rr^3$.
\end{remark}

We would like to immediately show a physical model of a non-planar flexible mesh.
\begin{example}\label{physical}
  Consider the following system of angles
  $$
    \left\{\begin{array}{l}
    (a_1, b_1, c_1, e_1,F_1)=\left(\frac{3}{2},1,n_0,-\frac{2}{3},0\right), \\
    (a_2, b_2, c_2, e_2,F_2)=\left(\frac{3}{2},n_0,1,-\frac{2}{3},0\right), \\
    (a_3, b_3, c_3, e_3,F_3)=\left(\frac{1}{2},-\frac{1}{2},\frac{(n_0+1)^2}{3},-\frac{(n_0+1)^2}{3},1\right), \\
    (a_4, b_4, c_4, e_4,F_4)=\left(\frac{3}{8n_0^2+12n_0+12},\frac{(n_0+1)^2}{4n_0^2+6n_0+6},-\frac{3}{8n_0^2+12n_0+12},-\frac{(n_0+1)^2}{4n_0^2+6n_0+6},0\right), \\
    \end{array}\right.
  $$
  where $n_0$ is the only root of $(n+1)^4+n=0$ lying in $(-1,0)$. A numerical solution could be
  $$
    \left\{\begin{array}{l}
    (\lambda_1, \gamma_1, \mu_1, \delta_1,\tau_1,\zeta_1)=(1.679854, 2.301666, 1.973198, 2.860453, 1.558808, -1.558808), \\
    (\lambda_2, \gamma_2, \mu_2, \delta_2,\tau_2,\zeta_2)=(1.679854, 2.860453, 1.973198, 2.301666, 0.694319, -0.694319), \\
    (\lambda_3, \gamma_3, \mu_3, \delta_3,\tau_3,\zeta_3)=(2.278478, 2.628901, 1.570796, 1.570796, 1.164528, 0.406268), \\
    (\lambda_4, \gamma_4, \mu_4, \delta_4,\tau_4,\zeta_4)=(2.003527, 1.570796, 1.570796, 2.335389, 0.907881, -0.907881), \\
    \end{array}\right.
  $$
  where the values of $\mu_3,\delta_3,(\tau_3+\zeta_3),\gamma_4,\mu_4$ are actually $\frac{\pi}{2}$. Visualization is in Fig.~\ref{Visual} or through the link.\footnote{\url{https://www.geogebra.org/calculator/xaurstfu}} \footnote{Please note that a numerical model is not exactly flexible. We had to set an extra free parameter $\zeta_3$ in the simulation. However, its value is almost fixed during the animation.}One can verify later that the above mesh is indeed flexible (Theorem \ref{flexequi}) and belongs to our most complicated \textbf{PQ + IQ} class (Section \ref{back-to-com}).
\end{example}

\begin{figure}[t]
    \centering
    \begin{overpic}[width=0.8\textwidth]{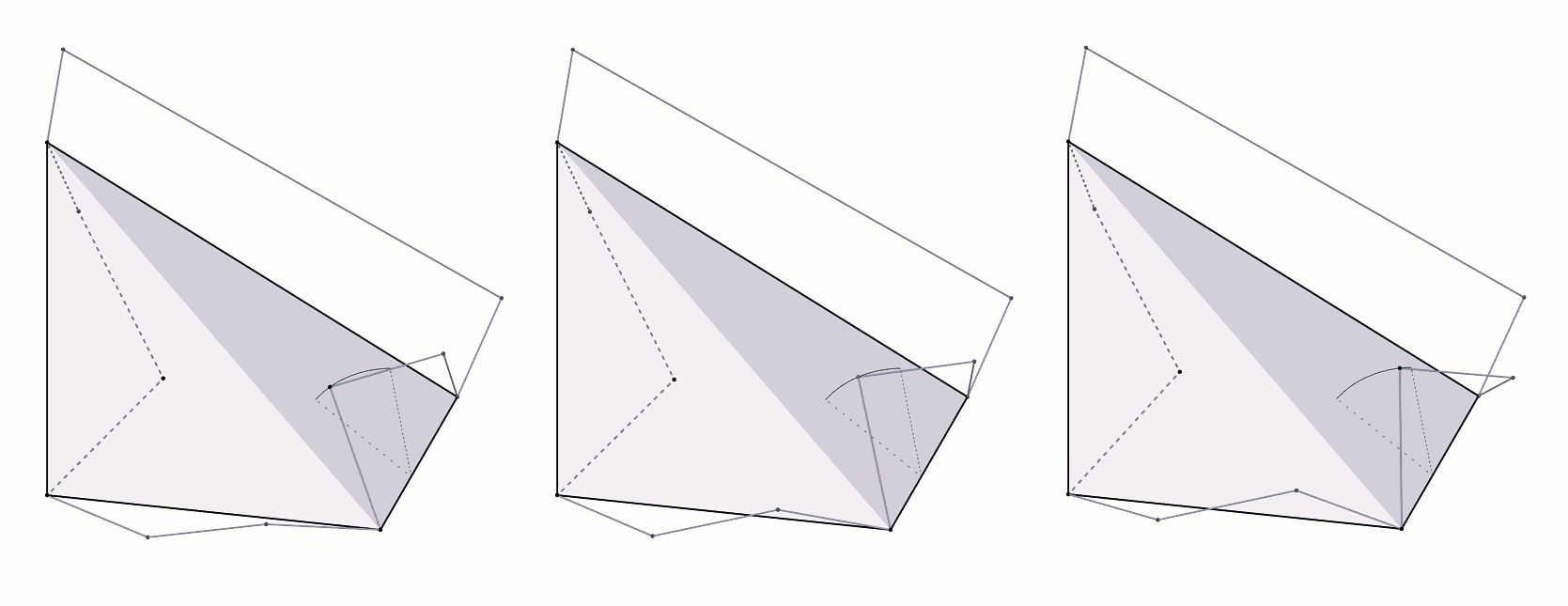}
    \put(23,2){\footnotesize\contour{white}{$V_1$}}
    \put(1,4.5){\footnotesize\contour{white}{$V_2$}}
    \put(0,29){\footnotesize\contour{white}{$V_3$}}
    \put(29,11){\footnotesize\contour{white}{$V_4$}}    
    \put(18.5,12){\footnotesize\contour{white}{$\rho$}}
    \put(19.5,15){\footnotesize\contour{white}{$X_1$}}
    \put(16,3){\footnotesize\contour{white}{$Y_1$}}
    \put(8,2){\footnotesize\contour{white}{$X_2$}}
    \put(10,12){\footnotesize\contour{white}{$Y_2$}}
    \put(6,24){\footnotesize\contour{white}{$X_3$}}
    \put(1,35){\footnotesize\contour{white}{$Y_3$}}
    \put(30.5,21){\footnotesize\contour{white}{$X_4$}}
    \put(27.5,17){\footnotesize\contour{white}{$Y_4$}}
    
    \put(56,2){\footnotesize\contour{white}{$V_1$}}
    \put(34,4.5){\footnotesize\contour{white}{$V_2$}}
    \put(32,29){\footnotesize\contour{white}{$V_3$}}
    \put(62,11){\footnotesize\contour{white}{$V_4$}}    
    \put(51,12){\footnotesize\contour{white}{$\rho$}}
    \put(53,15.7){\footnotesize\contour{white}{$X_1$}}
    \put(49,7){\footnotesize\contour{white}{$Y_1$}}
    \put(41,2){\footnotesize\contour{white}{$X_2$}}
    \put(43,12){\footnotesize\contour{white}{$Y_2$}}
    \put(38,25){\footnotesize\contour{white}{$X_3$}}
    \put(33,35){\footnotesize\contour{white}{$Y_3$}}
    \put(63.5,21){\footnotesize\contour{white}{$X_4$}}
    \put(60.5,16.5){\footnotesize\contour{white}{$Y_4$}}

    \put(89,2){\footnotesize\contour{white}{$V_1$}}
    \put(67,4.5){\footnotesize\contour{white}{$V_2$}}
    \put(64.6,29){\footnotesize\contour{white}{$V_3$}}
    \put(94,11){\footnotesize\contour{white}{$V_4$}}    
    \put(83.5,12){\footnotesize\contour{white}{$\rho$}}
    \put(87.5,16){\footnotesize\contour{white}{$X_1$}}
    \put(82,8.5){\footnotesize\contour{white}{$Y_1$}}
    \put(73,3){\footnotesize\contour{white}{$X_2$}}
    \put(76,13){\footnotesize\contour{white}{$Y_2$}}
    \put(70.3,25){\footnotesize\contour{white}{$X_3$}}
    \put(65.6,35){\footnotesize\contour{white}{$Y_3$}}
    \put(97,20.5){\footnotesize\contour{white}{$X_4$}}
    \put(96.5,15){\footnotesize\contour{white}{$Y_4$}}
\end{overpic}
    \caption{A flexible mesh with central panel $V_1V_2V_3V_4$ and attached panels $V_iY_iX_{i+1}V_{i+1}$. While undetermined, one can always choose a properly folded corner panel, containing the polyline $X_iV_iY_i$, to avoid self-intersection. This image is generated by rotating $X_1$ along arc $\rho$, in which dashed lines mean the corresponding panel is beneath the central one. }
    \label{Visual}
\end{figure}

\subsubsection{Shape restrictions of the quads}

Several types of spherical quads play an important role in flexibility problems. Here we follow the definitions from \cite{izmestiev-2017}.

\begin{definition}\label{(anti)iso}
    Quad $Q_i$ with arc lengths $(\lambda_i, \delta_i, \mu_i,\gamma_i)$ is called \textbf{(anti)isogonal} or an \textbf{(anti)isogram} if it satisfies one of the corresponding conditions: 
    \begin{itemize}
        \item [(i)] $\lambda_i= \mu_i, \gamma_i=\delta_i$ (isogram);\quad (ii) $\lambda_i+\mu_i=\gamma_i+\delta_i=\pi$ (antiisogram).
    \end{itemize} 
\end{definition}

Geometrically speaking, an isogram is a quad whose opposite arc lengths are equal, and an antiisogram is a quad whose opposite arc lengths are complementary to $\pi$. When similar conditions hold for adjacent arc lengths, we have

\begin{definition}\label{(anti)del}
    Quad $Q_i$ with arc lengths $(\lambda_i, \delta_i, \mu_i,\gamma_i)$ is called an \textbf{(anti)deltoid} if it satisfies one of the corresponding conditions:
    \begin{itemize}
        \item [(iii)] $\lambda_i= \gamma_i, \mu_i=\delta_i$ (deltoid);\quad (iv) $\lambda_i+\gamma_i=\mu_i+\delta_i=\pi$ (antideltoid);
        \item [(v)] $\lambda_i= \delta_i, \gamma_i=\mu_i$ (deltoid);\quad (vi) $\lambda_i+\delta_i=\gamma_i+\mu_i=\pi$ (antideltoid).
   \end{itemize} 
\end{definition}

Obviously, a single quad $Q_i$ is in general flexible. Its shape is able to continuously change while all arc lengths $\lambda_i, \gamma_i, \mu_i, \delta_i$ are fixed. Here we do not consider the case where 0 or $\pi$ appears to be one of the lengths, which means the corresponding quadrilateral in Fig.~\ref{sqmesh} degenerates into a triangle. It is therefore reasonable to assume that:
\begin{itemize}
        \item [$\bullet$] General assumption: $\boxed{\lambda_i, \delta_i, \mu_i,\gamma_i \in (0,\pi), \forall i=1,2,3,4}$
\end{itemize}
since they are defined by the $\arccos$ function, so Bricard equation (\ref{bricard}) is applicable to all spherical linkages.

\subsection{From sphere to polynomial ideals}

We have seen that each quad uniquely determines a polynomial (\ref{bricard}) which reflects the pattern of its deformation on the sphere. We want to describe the constraints on $(\lambda_i, \delta_i, \mu_i,\gamma_i)$ in terms of $(a_i, b_i, c_i, e_i)$ since we are not interested in polynomials with arbitrary coefficients but with a geometric background.

\subsubsection{Couplings and matchings}

Since all of $\lambda_i, \gamma_i, \mu_i, \delta_i$ lie in $(0,\pi)$, we have

\begin{equation}\label{n0d}
    \sin(\lambda_i)\sin(\delta_i)\sin(\gamma_i)\sin(\mu_i)> 0.
\end{equation}

Simple calculations can show the following.

\begin{lemma}\label{i-vi}
    The conditions for (anti)isograms and (anti)deltoids are equivalent to:
    \begin{itemize}
        \item [(i)] $\Leftrightarrow b_i=c_i=0$;\quad (ii) $\Leftrightarrow a_i=e_i=0$;\quad (iii) $\Leftrightarrow c_i=e_i=0$;
        \item [(iv)] $\Leftrightarrow a_i=b_i=0$;\quad (v) $\Leftrightarrow b_i=e_i=0$;\quad (vi) $\Leftrightarrow a_i=c_i=0$.
    \end{itemize}
\end{lemma}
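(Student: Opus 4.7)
The plan is to reduce each of the six equivalences to a routine sum-to-product trigonometric computation, organized by pairing the coefficients $(A_i, B_i, C_i, E_i)$ defined in equation~(\ref{bricard}). Since the general assumption $\gamma_i, \delta_i \in (0,\pi)$ forces $D_i = 4\sin(\gamma_i)\sin(\delta_i) > 0$, the normalization is harmless: the vanishing of any pair among $\{a_i, b_i, c_i, e_i\}$ is equivalent to the vanishing of the corresponding pair among $\{A_i, B_i, C_i, E_i\}$. So I would work exclusively with the uppercase coefficients.

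The key observation is that the six two-element subsets of $\{A_i, B_i, C_i, E_i\}$ are in natural bijection with the six shape conditions (i)--(vi). For each such pair, I would compute its sum and its difference and apply the identities $\cos P + \cos Q = 2\cos(\tfrac{P+Q}{2})\cos(\tfrac{P-Q}{2})$ and $\cos P - \cos Q = -2\sin(\tfrac{P+Q}{2})\sin(\tfrac{P-Q}{2})$. For instance, for case (i), the pair $(B_i, C_i)$ gives
\[
B_i + C_i = 2\cos(\lambda_i)\cos(\gamma_i - \delta_i) - 2\cos(\mu_i), \qquad B_i - C_i = -2\sin(\lambda_i)\sin(\gamma_i - \delta_i),
\]
and since $\sin(\lambda_i) > 0$, the vanishing of the difference forces $\gamma_i = \delta_i$ (the unique root of $\sin(\gamma_i - \delta_i) = 0$ inside $(-\pi,\pi)$), after which the vanishing of the sum reduces to $\cos(\lambda_i) = \cos(\mu_i)$, hence $\lambda_i = \mu_i$. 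The five remaining pairings $(A_i, E_i)$, $(C_i, E_i)$, $(A_i, B_i)$, $(B_i, E_i)$, $(A_i, C_i)$ are treated by the same template, and in each the difference yields a relation between two of $\lambda_i, \gamma_i, \delta_i$, while the sum yields the relation involving $\mu_i$.

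The converse direction is immediate: in each case, substituting the stated shape identity back into the definitions of the relevant pair among $A_i, B_i, C_i, E_i$ collapses both expressions to zero. So I would not treat the two directions separately, but simply present each case as an equivalence chain driven by the sum/difference decomposition.

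The only real place for confusion is ensuring that the general assumption $\lambda_i, \gamma_i, \mu_i, \delta_i \in (0,\pi)$ is used correctly to select the geometrically meaningful root: for example, in case (iv) the equation $\sin(\lambda_i + \gamma_i) = 0$ admits both $\lambda_i + \gamma_i = 0$ and $\lambda_i + \gamma_i = \pi$, but only the latter is consistent with the open interval. Once a small table listing the six pairings and the two roots each produces is laid out carefully, the lemma is essentially mechanical. I would deliver the argument by spelling out one case in detail, tabulating the sum/difference formulas for the other five, and invoking the common root-selection argument uniformly.
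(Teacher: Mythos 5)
Your proposal is correct: the paper omits this proof entirely (``Simple calculations can show the following''), and your sum/difference decomposition of the six pairs among $\{A_i,B_i,C_i,E_i\}$, combined with the root selection forced by $\lambda_i,\gamma_i,\mu_i,\delta_i\in(0,\pi)$ and the positivity of $D_i$, is exactly the routine computation being alluded to. All six pairings check out (e.g.\ $A_i-B_i=-2\sin(\lambda_i+\gamma_i)\sin(\delta_i)$ forces $\lambda_i+\gamma_i=\pi$ in case (iv), as you note), so nothing further is needed.
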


Clearly, a given mesh uniquely determines an ideal $M=(\Tilde{g}^{(1)},\Tilde{g}^{(2)},\Tilde{g}^{(3)},\Tilde{g}^{(4)})$ and when the mesh deforms, the 'trajectory' lies in $Z(M)\subset (\cp)^4$.

Now we are ready to convert all geometry concepts to their algebraic versions. The following notations are the most important ones hence they must be specifically defined and stick to their meanings throughout the article.

\begin{definition}\label{dfnpoly}
    $g^{(i)}\in \rr[x_i,y_i]$ is a polynomial in the form 
    $$
    g^{(i)}(x_i,y_i):=a_ix_i^2 y_i^2  + b_i x_i^2 + c_i y_i^2 + x_iy_i + e_i 
    $$
    where the coefficients satisfy\footnote{Recall inequality (\ref{n0d}), $(1-4a_ie_i-4b_ic_i)^2> 64a_ib_ic_ie_i \Leftrightarrow (\sin(\lambda_i)\sin(\delta_i)\sin(\gamma_i)\sin(\mu_i))^2>0$, $(a_i-b_i)(e_i-c_i)< \frac{1}{4}$ and $(a_i-c_i)(e_i-b_i)< \frac{1}{4}$ are both referred to $(\sin(\lambda_i))^2 > 0$.}
    $$
    (1-4a_ie_i-4b_ic_i)^2> 64a_ib_ic_ie_i,\,\, (a_i-b_i)(e_i-c_i)< \frac{1}{4},\,\, (a_i-c_i)(e_i-b_i)< \frac{1}{4}.
    $$
    $H^{(i)}\in \rr[y_i,x_{i+1}]$ is a polynomial determined by a constant $F_i\in \rp$ such that
    $$
    H^{(i)}(y_i,x_{i+1}):=\left\{
    \begin{array}{l}
        y_i(1-F_ix_{i+1})-(F_i+x_{i+1}) \text{ if } F_i\in \rr, \\
        y_ix_{i+1}+1 \text{ if } F_i=\infty. \\
    \end{array}
    \right.
    $$
    Finally, $\Tilde{g}^{(i)}\in \rr[x_i,x_{i+1}]$ and $R_i\in \rr[x_i,y_{i+1}]$ are the polynomials
    $$
    \Tilde{g}^{(i)}(x_i,x_{i+1}):=\text{Res}(g^{(i)},H^{(i)}; y_i),\,\, R_i(x_i,y_{i+1}):=\text{Res}(\Tilde{g}^{(i)},g^{(i+1)}; x_{i+1})
    $$
    where $\text{Res}(\cdot,\cdot;\ast)$ stands for the resultant respect to $'\ast'$.

    The index is always considered modulo 4, e.g. $x_5=x_1$, $H^{(8)}=H^{(4)}\in\rr[y_4,x_1]$.
\end{definition}

\begin{definition}\label{dfncom}
    For a coupling $S$, the irreducible components of $Z(S)$ in Zariski topology are called \textbf{components} of $Z(S)$ (or $S$). The components are unique up to permutations.
\end{definition}

\begin{definition}\label{coupling}
    An ideal in the form
    $$
    S=(\Tilde{g}^{(i)}, g^{(i+1)}, H^{(i)}, H^{(i+1)}) \subset \cc[x_i,x_{i+1},x_{i+2},y_i,y_{i+1}]
    $$
    is called a \textbf{coupling}. An ideal in the form 
    $$
    M=(\Tilde{g}^{(1)}(x_1,x_2), \Tilde{g}^{(2)}(x_2,x_3), \Tilde{g}^{(3)}(x_3,x_4), \Tilde{g}^{(4)}(x_4,x_1))\subset \cc[x_1,x_2,x_3,x_4]
    $$
    is called a \textbf{matching} if the zero set
    $$
    Z(M):=\{(x_1,x_2,x_3,x_4)\in (\cp)^4: f(x_1,x_2,x_3,x_4)=0, \forall f \in M\}
    $$
    is an infinite set.\footnote{We are going to apply the notation $Z(\cdot)$ on different ideals, the dimension depends on the number of variables contained by the ideal, e.g. $Z(S)\subset (\cp)^5$, $Z(g^{(i)})\subset (\cp)^2$, etc...}
\end{definition}

Through Bricard equation (\ref{bricard}), every mesh uniquely determines an ideal with the same format as a matching. However, a matching not only represents a mesh but also a flexible one. Geometrically speaking, a coupling $S=(\Tilde{g}^{(1)}, g^{(2)}, H^{(1)}, H^{(2)})$ is a pair of quads $(Q_1, F_1, Q_2, F_2)$ inserted by two constant gaps $(\zeta_{i}+\tau_{i})=2\arctan(F_i)$ at the opposite angles of $\beta_i$ for $i=1,2$ respectively, as shown in Fig.~\ref{squads} left, which is always flexible, i.e. $Z(S)$ is an infinite set. A matching is a proper pair of couplings $(Q_1,F_1,Q_2,F_2)$ and $(Q_3,F_3,Q_4,F_4)$ such that the spherical linkage $(Q_i,F_i)_{i=1}^{4}$ (Fig.~\ref{squads} right) is flexible, i.e. $Z(\Tilde{g}^{(1)}, \Tilde{g}^{(2)}, \Tilde{g}^{(3)}, \Tilde{g}^{(4)})$ is an infinite set.

\subsubsection{Non-singular properties}\label{nonsglpro}

\begin{definition}\label{dfnsingular}
    $g^{(i)}$ (or $\Tilde{g}^{(i)}$) is said to be \textbf{singular} if $a_ie_i=b_ic_i=0$, it is \textbf{non-singular} otherwise. Similarly, a coupling (or matching) is said to be \textbf{singular} if it contains a singular $g^{(i)}$ (or $\Tilde{g}^{(i)}$), it is \textbf{non-singular} otherwise.
\end{definition}

In this article, we are only interested in non-singular couplings and matching. It is not hard to check that when $g^{(i)}$ is non-singular, it is quadratic in both $x_i, y_i$ and has no factors in $\cc[x_i]$ or $\cc[y_i]$. In addition, we have the following.

\begin{lemma}\label{rational}
    A non-singular polynomial $g^{(i)}$ is reducible if and only if $a_i=e_i=0$ or $b_i=c_i=0$. Moreover, when $g^{(i)}$ is reducible, the factorization must be in the form  
    $$
    g^{(i)}=\left\{
    \begin{array}{l}
        c_i(kx_i-y_i)(k'x_i-y_i) \text{ if } a_i=e_i=0, \\
        a_i(x_iy_i-k)(x_iy_i-k') \text{ if } b_i=c_i=0, \\
    \end{array}
    \right.
    $$ 
where $kk'\neq 0$.
\end{lemma}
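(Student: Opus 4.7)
The plan is to view $g^{(i)}$ as a quadratic polynomial in $x_i$ with coefficients in $\cc[y_i]$, namely
$$g^{(i)} = (a_i y_i^2 + b_i)\,x_i^2 + y_i\,x_i + (c_i y_i^2 + e_i),$$
and to analyze its reducibility through the discriminant. First, I would show that non-singularity forbids any non-constant factor of $g^{(i)}$ lying entirely in $\cc[x_i]$ or $\cc[y_i]$: such a factor in $\cc[y_i]$ must divide the linear coefficient $y_i$ and hence equals $c\,y_i$ up to a scalar, but then $y_i$ dividing both $a_iy_i^2+b_i$ and $c_iy_i^2+e_i$ forces $b_i=e_i=0$, contradicting non-singularity; the symmetric argument rules out factors in $\cc[x_i]$. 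Combined with Gauss's lemma, this makes reducibility of $g^{(i)}$ in $\cc[x_i,y_i]$ equivalent to its splitting as a quadratic in $\cc(y_i)[x_i]$, i.e.\ to the discriminant
$$\Delta(y_i) := y_i^2 - 4(a_iy_i^2+b_i)(c_iy_i^2+e_i) = -4a_ic_i\,y_i^4 + (1-4a_ie_i-4b_ic_i)\,y_i^2 - 4b_ie_i$$
being a perfect square in $\cc[y_i]$.

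Next, I would set $\Delta(y_i) = (\alpha y_i^2 + \beta y_i + \gamma)^2$ and match coefficients term by term. The vanishing of the $y_i^3$ and $y_i$ coefficients forces $\alpha\beta = \beta\gamma = 0$, splitting the problem into two branches: either $\beta = 0$ or $\alpha = \gamma = 0$. The main obstacle is ruling out the first branch. In the $\beta = 0$ case, one obtains $\alpha^2 = -4 a_i c_i$, $\gamma^2 = -4 b_i e_i$, and $2\alpha\gamma = 1 - 4a_ie_i - 4b_ic_i$; squaring the last relation and substituting the other two yields the \emph{equality} $(1 - 4a_ie_i - 4b_ic_i)^2 = 64\,a_ib_ic_ie_i$, which directly contradicts the strict inequality $(1-4a_ie_i-4b_ic_i)^2 > 64\,a_ib_ic_ie_i$ baked into the non-singular hypothesis. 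This is precisely where that strict inequality (coming from $\sin\lambda_i\sin\delta_i\sin\gamma_i\sin\mu_i > 0$) enters essentially.

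The remaining branch $\alpha = \gamma = 0$ gives $a_ic_i = 0$ and $b_ie_i = 0$; of the four resulting subcases, the two corresponding to $\{a_i=0,b_i=0\}$ or $\{c_i=0,e_i=0\}$ again violate non-singularity, leaving exactly $a_i = e_i = 0$ (with $b_ic_i\neq 0$) or $b_i = c_i = 0$ (with $a_ie_i\neq 0$). Finally, to obtain the explicit factorizations, in the first case $g^{(i)} = b_ix_i^2 + x_iy_i + c_iy_i^2$ is a binary homogeneous quadratic in $(x_i, y_i)$ that factors as $c_i(kx_i - y_i)(k'x_i - y_i)$ with $k,k'$ the roots of $c_it^2 + t + b_i = 0$, so that $kk' = b_i/c_i \neq 0$; in the second case the substitution $z = x_iy_i$ reduces $g^{(i)}$ to $a_iz^2 + z + e_i$, which factors as $a_i(x_iy_i - k)(x_iy_i - k')$ with $kk' = e_i/a_i \neq 0$. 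Both factorizations are visibly non-trivial in $\cc[x_i,y_i]$, simultaneously verifying the ``if'' direction.
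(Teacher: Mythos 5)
Your proof is correct and follows essentially the same route as the paper: both reduce reducibility of $g^{(i)}$ to the discriminant $-4a_ic_iy_i^4+(1-4a_ie_i-4b_ic_i)y_i^2-4b_ie_i$ being a perfect square in $\cc[y_i]$ (after ruling out factors in $\cc[x_i]$ or $\cc[y_i]$ via non-singularity), and both extract the contradiction from the strict inequality $(1-4a_ie_i-4b_ic_i)^2>64a_ib_ic_ie_i$ of Definition~\ref{dfnpoly}. Your coefficient-matching on $(\alpha y_i^2+\beta y_i+\gamma)^2$ is just a slightly more systematic packaging of the paper's case split on $a_ic_i$, so there is nothing substantive to add.
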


Combine Lemma \ref{i-vi} and \ref{rational} we immediately have

\begin{corollary}\label{redu=iso}
    Given a quad $Q_i=(\lambda_i, \gamma_i, \mu_i, \delta_i)$ which determines a polynomial $g^{(i)}$ through equation (\ref{bricard}). Then $Q_i$ is an (anti)deltoid if and only if $g^{(i)}$ is singular. Further, suppose $g^{(i)}$ is non-singular, $g^{(i)}$ is reducible if and only if $Q_i$ is (anti)isogonal if and only if $\{a_i,b_i,c_i,e_i\}$ has more than one 0.
\end{corollary}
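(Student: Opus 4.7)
The plan is essentially bookkeeping that unpacks the definitions against Lemmas \ref{i-vi} and \ref{rational}, via a case analysis on which elements of $\{a_i, b_i, c_i, e_i\}$ vanish. No new geometric or algebraic content is needed beyond what those two lemmas already provide; the whole statement is a corollary in the genuine sense.

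For the first equivalence (singular iff (anti)deltoid), I would start from the definition: $g^{(i)}$ is singular iff $a_ie_i = b_ic_i = 0$, which says at least one element of each of the pairs $(a_i, e_i)$ and $(b_i, c_i)$ must vanish. Enumerating the four such patterns $\{a_i, b_i\}$, $\{a_i, c_i\}$, $\{e_i, b_i\}$, $\{e_i, c_i\}$ and lining them up with conditions (iv), (vi), (v), (iii) of Lemma \ref{i-vi} identifies them exactly with the four flavors of (anti)deltoid. Conversely, every (anti)deltoid condition (iii)-(vi) kills both products $a_ie_i$ and $b_ic_i$, so the corresponding $g^{(i)}$ is singular. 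This gives the claimed iff.

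For the chain of equivalences in the non-singular case, Lemma \ref{rational} already states that non-singular $g^{(i)}$ is reducible iff $a_i=e_i=0$ or $b_i=c_i=0$, and Lemma \ref{i-vi}(i)-(ii) identifies those two conditions precisely with $Q_i$ being an isogram or an antiisogram. It remains to replace this by the cardinality condition. Non-singularity forces $a_ie_i \neq 0$ or $b_ic_i \neq 0$: in the first case both $a_i, e_i$ are nonzero, so any two zeros among the four coefficients must be concentrated in $\{b_i, c_i\}$, forcing $b_i=c_i=0$; the other case is symmetric. Hence among non-singular $g^{(i)}$, "more than one zero among $\{a_i,b_i,c_i,e_i\}$" is equivalent to exactly one of $\{a_i=e_i=0\}, \{b_i=c_i=0\}$, closing the chain.

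The main obstacle is essentially none; the only minor care needed is keeping the two halves of the corollary disjoint. The (anti)deltoid cases (iii)-(vi) each vanish one element from \emph{each} of the pairs $(a_i,e_i), (b_i,c_i)$ and are thus exactly the singular side, while the (anti)isogram cases (i)-(ii) kill both elements of \emph{one} pair and leave the other product nonzero, placing them cleanly on the non-singular/reducible side. No case overlaps, so the statement splits cleanly and the proof is a couple of lines of case-matching once Lemmas \ref{i-vi} and \ref{rational} are in hand.
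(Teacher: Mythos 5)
Your proposal is correct and matches the paper's approach: the paper states this corollary as an immediate combination of Lemma \ref{i-vi} and Lemma \ref{rational} without writing out the case-matching, and your elaboration (pairing the four singular patterns with conditions (iii)--(vi), and using non-singularity to show that ``more than one zero'' concentrates the zeros in a single pair) is exactly the intended bookkeeping.
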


\begin{proposition}\label{2g_i}
    A non-singular polynomial $\Tilde{g}^{(i)}$ shares the same reducibility with $g^{(i)}$. In particular, 
    \begin{itemize}
        \item [\textbullet] $\Tilde{g}^{(i)}$ is quadratic in both $x_i, x_{i+1}$ and has no factors in $\cc[x_i]$ or $\cc[x_{i+1}]$;
        \item [\textbullet] $f^{(i)}$ is a factor of $g^{(i)}$ if and only if $\Tilde{f}^{(i)}=\text{Res}(f^{(i)},H^{(i)}; y_i)$ is a factor of $\Tilde{g}^{(i)}$.
    \end{itemize}
\end{proposition}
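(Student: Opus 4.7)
Plan: Everything hinges on the observation that $H^{(i)}$ is linear in $y_i$, so the resultant reduces to a substitution: for $F_i\in\rr$,
\[
\Tilde{g}^{(i)}(x_i,x_{i+1}) \;=\; (1-F_ix_{i+1})^{2}\,g^{(i)}\!\left(x_i,\tfrac{F_i+x_{i+1}}{1-F_ix_{i+1}}\right),
\]
with $\Tilde{g}^{(i)}=x_{i+1}^{2}\,g^{(i)}(x_i,-1/x_{i+1})$ in the limiting case $F_i=\infty$. The map $\phi(x_{i+1})=(F_i+x_{i+1})/(1-F_ix_{i+1})$ is a Möbius bijection $\cp\to\cp$, so the passage $g^{(i)}\leadsto\Tilde{g}^{(i)}$ is essentially a birational change of the second variable. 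Expanding, the coefficient of $x_i^{2}$ in $\Tilde{g}^{(i)}$ reads $(a_i+F_i^{2}b_i)x_{i+1}^{2}+2F_i(a_i-b_i)x_{i+1}+(a_iF_i^{2}+b_i)$, which vanishes identically only if $a_i=b_i=0$; since non-singularity demands $a_ie_i\ne 0$ or $b_ic_i\ne 0$, this is impossible. The coefficient of $x_{i+1}^{2}$ equals $(a_i+F_i^{2}b_i)x_i^{2}-F_ix_i+(c_i+F_i^{2}e_i)$: for $F_i\ne 0$ its $x_i$-term is nonzero, and for $F_i=0$ it reduces to $a_ix_i^{2}+c_i$, which likewise fails to vanish by non-singularity. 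The $F_i=\infty$ formula is handled identically. Hence $\Tilde{g}^{(i)}$ has degree exactly $2$ in each variable.

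Next I would rule out factors in $\cc[x_i]$ and $\cc[x_{i+1}]$. Suppose $p(x_i)\in\cc[x_i]$ divides $\Tilde{g}^{(i)}$ nontrivially; for any root $x_i^{*}$ of $p$, $\Tilde{g}^{(i)}(x_i^{*},\cdot)$ vanishes identically on $\cp$, so by the substitution formula (valid away from $x_{i+1}=1/F_i$) $g^{(i)}(x_i^{*},y_i)$ vanishes on all of $\cc$, hence identically. Regarded as a polynomial in $y_i$ over $\cc[x_i]$ we have $g^{(i)}=(a_ix_i^{2}+c_i)y_i^{2}+x_iy_i+(b_ix_i^{2}+e_i)$, so $p$ must divide each of its three $\cc[x_i]$-coefficients; the middle one being $x_i$ forces $p\mid x_i$, and then $x_i\mid(a_ix_i^{2}+c_i)$, $x_i\mid(b_ix_i^{2}+e_i)$ give $c_i=e_i=0$, contradicting non-singularity. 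The symmetric argument for an alleged factor $q(x_{i+1})\in\cc[x_{i+1}]$ evaluates at a root $x_{i+1}^{*}$, sets $y_i^{*}=\phi(x_{i+1}^{*})$, and forces $y_i^{*}=0$, $b_i=e_i=0$, again impossible; the borderline case $x_{i+1}^{*}=1/F_i$ is excluded by the direct computation $\Tilde{g}^{(i)}(x_i,1/F_i)=\tfrac{(F_i^{2}+1)^{2}}{F_i^{2}}(a_ix_i^{2}+c_i)\not\equiv 0$.

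The factorization correspondence follows. Forward: multiplicativity of the resultant in the first argument gives $\Tilde{g}^{(i)}=\text{Res}(fh,H^{(i)};y_i)=\text{Res}(f,H^{(i)};y_i)\cdot\text{Res}(h,H^{(i)};y_i)=\Tilde{f}\,\Tilde{h}$ whenever $g^{(i)}=fh$, and the previous paragraph ensures neither factor is a unit. Conversely, given $\Tilde{g}^{(i)}=\Tilde{A}\cdot\Tilde{B}$ with $\deg_{x_{i+1}}\Tilde{A}+\deg_{x_{i+1}}\Tilde{B}=2$, invert the Möbius substitution via the identity
\[
(1+F_i^{2})^{2}\,g^{(i)}(x_i,y_i)\;=\;(1+F_iy_i)^{2}\,\Tilde{g}^{(i)}\!\left(x_i,\tfrac{y_i-F_i}{1+F_iy_i}\right),
\]
and distribute the factor $(1+F_iy_i)^{2}$ between the pull-backs of $\Tilde{A}$ and $\Tilde{B}$ in proportion to their partial $x_{i+1}$-degrees, producing a genuine factorization of $g^{(i)}$ in $\cc[x_i,y_i]$. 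The main obstacle will be this last piece of degree bookkeeping—verifying that each pull-back emerges as an honest polynomial rather than a Laurent polynomial in $y_i$, which forces the exponent split to match the partial degrees of $\Tilde{A}$ and $\Tilde{B}$—together with rerunning the argument through $y_i=-1/x_{i+1}$ in the degenerate case $F_i=\infty$.
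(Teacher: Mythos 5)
Your proposal is correct and follows essentially the same route as the paper: the paper's (much terser) proof rests on exactly the observation that the resultant with the $y_i$-linear polynomial $H^{(i)}$ is an invertible Möbius substitution in the second variable, recorded there as $g^{(i)}=c\cdot\text{Res}(\Tilde{g}^{(i)},H^{(i)};x_{i+1})$, which is precisely your displayed inverse identity. Your degree checks and the exclusion of factors in $\cc[x_i]$ or $\cc[x_{i+1}]$ via non-singularity are the details the paper leaves to the reader, and they are carried out correctly.
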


\begin{proof}
    Since $H^{(i)}=0$ provides an rational relation between $y_i$ and $x_{i+1}$, one should realize the factorization of $g^{(i)}$ induces the factorization of $\Tilde{g}^{(i)}$ and vice versa. In fact, it is easy to check that $\exists c\neq 0$ such that $g^{(i)}=c \cdot\text{Res}(\Tilde{g}^{(i)},H^{(i)}; x_{i+1})$.
\end{proof}

\begin{corollary}\label{r_ij}
    Given $\Tilde{g}^{(i)}, g^{(i+1)}$ non-singular and at least one of them is reducible, for any irreducible factors $\Tilde{f}^{(i)},f^{(i+1)}$ of $\Tilde{g}^{(i)}, g^{(i+1)}$ respectively, $\text{Res}(\Tilde{f}^{(i)},f^{(i+1)};x_{i+1})$ is irreducible.
\end{corollary}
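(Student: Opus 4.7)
My plan is to split into cases by which of $\Tilde{g}^{(i)}, g^{(i+1)}$ is reducible. In each case, Proposition \ref{2g_i} together with Lemma \ref{rational} pins down the shape of the irreducible factors: any irreducible factor $\Tilde{f}^{(i)}$ of a reducible non-singular $\Tilde{g}^{(i)}$ is of bidegree $(1,1)$ in $(x_i,x_{i+1})$, so it may be written as $A(x_i)x_{i+1}+B(x_i)$ with $A,B$ linear in $x_i$; symmetrically, any irreducible factor $f^{(i+1)}$ of a reducible $g^{(i+1)}$ takes the form $C(y_{i+1})x_{i+1}+D(y_{i+1})$ with $C,D$ linear in $y_{i+1}$. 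Non-singularity moreover forces $\gcd(A,B)=1$ in $\cc[x_i]$ (respectively $\gcd(C,D)=1$ in $\cc[y_{i+1}]$): otherwise the common factor would lie in $\cc[x_i]$ (respectively $\cc[y_{i+1}]$) and divide $\Tilde{f}^{(i)}$ (respectively $f^{(i+1)}$), contradicting the exclusion of such factors in a non-singular polynomial.

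When both $\Tilde{g}^{(i)}$ and $g^{(i+1)}$ are reducible, a direct Sylvester computation gives $\text{Res}(\Tilde{f}^{(i)},f^{(i+1)};x_{i+1}) = AD-BC$, a bidegree $(1,1)$ polynomial in $(x_i,y_{i+1})$. Such a polynomial is reducible exactly when it splits as a product of a polynomial in $x_i$ and one in $y_{i+1}$; expanding $A=A_1x_i+A_0$ and similarly for $B,C,D$, the splitting condition collapses to $(A_1B_0-A_0B_1)(C_1D_0-C_0D_1)=0$. The first factor vanishes only when $A,B$ are proportional over $\cc$, contradicting $\gcd(A,B)=1$; the second vanishes only when $C,D$ are proportional, contradicting $\gcd(C,D)=1$. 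Hence $AD-BC$ is irreducible.

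In the mixed case (say $\Tilde{g}^{(i)}$ reducible and $g^{(i+1)}$ irreducible; the reverse case is symmetric), linearity of $\Tilde{f}^{(i)}$ in $x_{i+1}$ allows the substitution $x_{i+1}=\phi(x_i):=-B(x_i)/A(x_i)$, a M\"obius function in $x_i$, so that $G:=\text{Res}(\Tilde{f}^{(i)},f^{(i+1)};x_{i+1})=A(x_i)^2\,f^{(i+1)}(\phi(x_i),y_{i+1})$. Since $\phi$ is a bijection of $\cp$, the assignment $x_{i+1}\mapsto\phi(x_i)$ induces a field isomorphism $\cc(x_{i+1})\cong\cc(x_i)$; applying Gauss's lemma to the primitive polynomial $f^{(i+1)}\in\cc[x_{i+1}][y_{i+1}]$ (primitive since $g^{(i+1)}$ has no factor in $\cc[y_{i+1}]$) yields its irreducibility in $\cc(x_{i+1})[y_{i+1}]$, which transports through the isomorphism to irreducibility of $G$ in $\cc(x_i)[y_{i+1}]$. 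A second application of Gauss then promotes this to irreducibility in $\cc[x_i,y_{i+1}]$, provided we check that $G$ has trivial content in $\cc[x_i]$. This primitivity check is the step I expect to be the most delicate: the three coefficients of $G$ viewed as a polynomial in $y_{i+1}$ are $a_{i+1}B^2+c_{i+1}A^2$, $-AB$, and $b_{i+1}B^2+e_{i+1}A^2$, so any common $\cc[x_i]$-divisor must divide $AB$, and then $\gcd(A,B)=1$ together with the non-singularity of $g^{(i+1)}$ (which forbids the singular configurations $a_{i+1}=b_{i+1}=0$ or $c_{i+1}=e_{i+1}=0$) forces the divisor to be a constant.
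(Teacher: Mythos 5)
Your proof is correct and follows essentially the same route as the paper's: the paper's two-line argument is exactly your mixed case (the bilinear factor $\Tilde{f}^{(i)}$ gives a M\"obius substitution $x_{i+1}=\phi(x_i)$ under which any factorization of the resultant would induce a factorization of the irreducible cofactor), and you simply make the Gauss-lemma/primitivity bookkeeping explicit and settle the doubly-reducible case by the direct $2\times 2$ determinant criterion. One cosmetic point: $A_1B_0-A_0B_1=0$ also covers the case where $A,B$ are both nonzero constants, which does not contradict $\gcd(A,B)=1$; that case is instead excluded because $\Tilde{f}^{(i)}$ would then be a factor of $\Tilde{g}^{(i)}$ lying in $\cc[x_{i+1}]$, which Proposition~\ref{2g_i} forbids.
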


\begin{corollary}\label{noncons}
    Given $\Tilde{g}^{(i)}, g^{(i+1)}$ non-singular, $R_i(x_i,y_{i+1})$ has no factor in $\cc[x_i]$ or $\cc[y_{i+1}]$.
\end{corollary}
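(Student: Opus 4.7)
The plan is to argue by contradiction, converting a putative one-variable factor of $R_i$ into a contradiction with either Proposition \ref{2g_i} or with the characterization of non-singularity for $g^{(i+1)}$. Suppose $R_i(x_i,y_{i+1})$ has a nonconstant factor in $\cc[x_i]$; picking a linear factor, we may assume it to be $x_i-\alpha$ for some $\alpha\in\cc$, so that $R_i(\alpha,y_{i+1})\equiv 0$ in $y_{i+1}$. By the standard specialization property of the resultant, for every $y_{i+1}\in\cc$ outside the finite zero set of the $x_{i+1}$-leading coefficient $a_{i+1}y_{i+1}^2+b_{i+1}$ of $g^{(i+1)}$, the specialized polynomials $\Tilde{g}^{(i)}(\alpha,x_{i+1})$ and $g^{(i+1)}(x_{i+1},y_{i+1})$ must share a common root in $x_{i+1}$.

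Since Proposition \ref{2g_i} asserts that $\Tilde{g}^{(i)}$ has no factor in $\cc[x_i]$, the univariate polynomial $\Tilde{g}^{(i)}(\alpha,x_{i+1})$ is not identically zero; hence its roots $\{\beta_1,\beta_2\}\subset\cc$ form a set of size at most two. A pigeonhole argument then yields some $j\in\{1,2\}$ for which $g^{(i+1)}(\beta_j,y_{i+1})=0$ holds at infinitely many $y_{i+1}$, and therefore identically in $y_{i+1}$. This forces $(x_{i+1}-\beta_j)\mid g^{(i+1)}$ in $\cc[x_{i+1},y_{i+1}]$, contradicting the fact (recorded in the paragraph preceding Lemma \ref{rational}) that a non-singular $g^{(i+1)}$ has no factor in $\cc[x_{i+1}]$. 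Ruling out a factor $y_{i+1}-\alpha$ is entirely symmetric: one specializes $y_{i+1}=\alpha$, uses non-singularity of $g^{(i+1)}$ to observe that $g^{(i+1)}(x_{i+1},\alpha)$ is a nonzero polynomial in $x_{i+1}$ with at most two roots, and pigeonholes to conclude that some $(x_{i+1}-\beta_j)$ divides $\Tilde{g}^{(i)}$, contradicting Proposition \ref{2g_i}.

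The delicate point is the case where the $x_{i+1}$-leading coefficient of $\Tilde{g}^{(i)}(\alpha,x_{i+1})$ vanishes, so that a root of this polynomial is naturally located at $\infty\in\cp$ rather than in $\cc$, and the naive affine specialization of the Sylvester resultant requires more care. The cleanest remedy is to phrase the whole argument projectively in the $x_{i+1}$-variable, i.e.\ to homogenize $\Tilde{g}^{(i)}$ and $g^{(i+1)}$ in $x_{i+1}$ and use the projective resultant: the count of projective roots of $\Tilde{g}^{(i)}(\alpha,\cdot)$ in $\cp$ is still bounded by two, and the pigeonhole step is insensitive to whether $\beta_j$ is finite or at $\infty$, since the analogue ``$g^{(i+1)}(\infty,y_{i+1})\equiv 0$'' also forces the leading coefficient $a_{i+1}y_{i+1}^2+b_{i+1}$ to vanish identically---another manifestation of a $\cc[x_{i+1}]$-factor obstruction ruled out by non-singularity. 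With this projective interpretation the argument goes through uniformly and the same device dispenses with the analogous edge case in the symmetric half.
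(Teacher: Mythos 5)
Your proof is correct and follows essentially the same route as the paper's: both arguments specialize one variable of $R_i$ at a root of the putative univariate factor, invoke the common-root characterization of a vanishing resultant, and then count roots to force a single-variable factor of $\Tilde{g}^{(i)}$ or $g^{(i+1)}$, contradicting Proposition \ref{2g_i} and non-singularity. The only cosmetic difference is that you pigeonhole over the at most two roots of the frozen polynomial whereas the paper observes that the roots of the unfrozen one sweep out infinitely many values; your projective handling of the degree-drop edge case is sound and in fact more careful than the paper's footnote.
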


\begin{proposition}\label{eqcoup}
    A non-singular coupling $S$ has the following equivalent representations:
    $$
    S=(\Tilde{g}^{(i)}, g^{(i+1)}, H^{(i)}, H^{(i+1)})=(g^{(i)}, g^{(i+1)}, H^{(i)}, H^{(i+1)})=(\Tilde{g}^{(i)}, \Tilde{g}^{(i+1)}, H^{(i)}, H^{(i+1)}).
    $$
    The components have the following equivalent representations:
\begin{itemize}
        \item [$\bullet$] If at least one of $\Tilde{g}^{(i)}, g^{(i+1)}$ is reducible,
            \begin{equation}\label{rr}
            Z(S)=\bigcup_{j,k}W_{jk}=\bigcup_{j,k}Z(\Tilde{f}^{(i)}_j, f^{(i+1)}_k, H^{(i)}, H^{(i+1)})=\bigcup_{j,k}Z(\Tilde{f}^{(i)}_j, f^{(i+1)}_k, r_{jk}, H^{(i)}, H^{(i+1)})    
            \end{equation}
            where $\Tilde{f}^{(i)}_j, f^{(i+1)}_k$ travel all distinct irreducible factors of $\Tilde{g}^{(i)}, g^{(i+1)}$ respectively, $r_{jk}:=\text{Res}(\Tilde{f}^{(i)}_j,$ 
            $f^{(i+1)}_k;x_{i+1})$ is irreducible and $r_{jk} | R_i(x_i,y_{i+1})$;
        \item [$\bullet$] If none of $\Tilde{g}^{(i)}, g^{(i+1)}$ is reducible,
            \begin{equation}\label{irr}
                Z(S)=\bigcup_{k}W_{k}=\bigcup_{k}Z(\Tilde{g}^{(i)}, g^{(i+1)}, r_k, H^{(i)}, H^{(i+1)})=\bigcup_{k}Z(g^{(i)}, g^{(i+1)}, r_k, H^{(i)}, H^{(i+1)})    
            \end{equation}
            where $r_k$ travels all distinct irreducible factors of $R_i(x_i,y_{i+1})$ (notice that $R_i\in S$).
\end{itemize}
    Moreover, a matching $M=(\Tilde{g}^{(1)}, \Tilde{g}^{(2)}, \Tilde{g}^{(3)}, \Tilde{g}^{(4)})$ can be regarded as the sum of two couplings since $Z(S)\cong Z(\Tilde{g}^{(i)}, \Tilde{g}^{(i+1)})$.
\end{proposition}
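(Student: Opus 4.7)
The plan is to prove the four assertions in turn. First I would establish the equivalence of the three ideal presentations by invoking the defining identity $\Tilde{g}^{(i)}=\text{Res}(g^{(i)},H^{(i)};y_i)$, which expresses $\Tilde{g}^{(i)}$ as an explicit polynomial combination of $g^{(i)}$ and $H^{(i)}$, together with its converse $g^{(i)}=c\cdot\text{Res}(\Tilde{g}^{(i)},H^{(i)};x_{i+1})$ (nonzero $c$) from Proposition \ref{2g_i}. Together these give $(g^{(i)},H^{(i)})=(\Tilde{g}^{(i)},H^{(i)})$ inside $\cc[x_i,y_i,x_{i+1}]$, and applying the swap independently at both indices $i$ and $i+1$ yields all three presentations.

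Next, for the set-theoretic decomposition, I would observe that $H^{(i)}$ and $H^{(i+1)}$ define birational correspondences $y_i\leftrightarrow x_{i+1}$ and $y_{i+1}\leftrightarrow x_{i+2}$, so projecting $Z(S)$ onto the coordinates $(x_i,x_{i+1},y_{i+1})$ yields an isomorphism onto $Z(\Tilde{g}^{(i)},g^{(i+1)})$ cut out in affine three-space. If $\Tilde{g}^{(i)}=\prod_j\Tilde{f}^{(i)}_j$ or $g^{(i+1)}=\prod_k f^{(i+1)}_k$ factors, the union decomposition (\ref{rr}) is immediate from distributing the product. When both are irreducible, $R_i\in S$ lies in $\cc[x_i,y_{i+1}]$, and factoring $R_i=\prod_k r_k$ into irreducibles gives (\ref{irr}) because any zero of $S$ must vanish on some $r_k$.

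The hard part is proving irreducibility of each alleged component. For the reducible case, I would consider the projection $\pi: W_{jk}\to Z(r_{jk})$ sending $(x_i,x_{i+1},y_{i+1})$ to $(x_i,y_{i+1})$. By Lemma \ref{rational}, whichever of $g^{(i)},g^{(i+1)}$ is reducible forces the corresponding factor ($\Tilde{f}^{(i)}_j$ via Proposition \ref{2g_i}, or $f^{(i+1)}_k$) to be linear in $x_{i+1}$; hence the generic fiber of $\pi$ is a single point and $\pi$ is birational. Since $r_{jk}$ is irreducible by Corollary \ref{r_ij}, irreducibility of $W_{jk}$ follows. For the fully irreducible case, the analogous projection $W_k\to Z(r_k)$ has fibers equal to the common $x_{i+1}$-roots of $\Tilde{g}^{(i)}(x_i,\cdot)$ and $g^{(i+1)}(\cdot,y_{i+1})$; pairwise coprimality of the irreducible factors $r_k$ of $R_i$ guarantees that over a generic point of $Z(r_k)$ there is exactly one such common root (no multiple common roots appear), so $\pi$ is again birational and irreducibility transfers upstairs. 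The delicate technical point to verify is this coprimality, which should follow from separability of the field extension generated by a single component together with the non-singular assumption and Corollary \ref{noncons}.

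Finally, the matching claim follows by using the ideal equivalence established in the first step to put each coupling $S_\ell$ in the form $(\Tilde{g}^{(2\ell-1)},\Tilde{g}^{(2\ell)},H^{(2\ell-1)},H^{(2\ell)})$; the sum $S_1+S_2$ then contains all four $\Tilde{g}^{(i)}$, and eliminating the $y_i$'s via the four $H$-relations sets up the graph isomorphism $Z(S_1+S_2)\cong Z(\Tilde{g}^{(1)},\Tilde{g}^{(2)},\Tilde{g}^{(3)},\Tilde{g}^{(4)})=Z(M)$, as desired.
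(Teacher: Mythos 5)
Your treatment of the ideal identities, of the set-theoretic union, of the reducible case (where one of $\Tilde{f}^{(i)}_j,f^{(i+1)}_k$ is linear in $x_{i+1}$, so the fiber over $Z(r_{jk})$ is generically a single point and irreducibility descends from Corollary \ref{r_ij}), and of the final claim $Z(S)\cong Z(\Tilde{g}^{(i)},\Tilde{g}^{(i+1)})$ is correct and runs along essentially the same lines as the paper's proof.

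The gap is in the case where both $\Tilde{g}^{(i)}$ and $g^{(i+1)}$ are irreducible. Your assertion that over a generic point of $Z(r_k)$ the quadratics $\Tilde{g}^{(i)}(x_i,\cdot)$ and $g^{(i+1)}(\cdot,y_{i+1})$ share exactly one root in $x_{i+1}$, so that $\pi\colon W_k\to Z(r_k)$ is birational, is false, and pairwise coprimality of the distinct irreducible factors of $R_i$ cannot deliver it: coprimality says nothing about the multiplicity with which a single factor occurs in $R_i$, and a repeated factor is precisely the signature of a two-point fiber. The paper itself supplies counterexamples. In the involutive-rational situation of Lemma \ref{n-is} one has $(x_3-kx_1)^2\mid R$, and over a generic point of the line $x_3=kx_1$ the two quadratics in $x_2$ determine the same minimal polynomial, hence are proportional and share both of their roots; in the involutive-quadratic situation of Lemma \ref{samemini}, $R_1$ is a nonzero constant times the square of a single irreducible quadratic, and again the generic fiber of $W\to Z(r)$ consists of two points. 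In both cases the component is nevertheless irreducible, but this cannot be read off from the projection onto $(x_i,y_{i+1})$ alone: when the covering is two-to-one one must in addition show that the quadratic cutting out the fiber does not split over the function field of $Z(r_k)$ (equivalently, that $x_{i+1}\notin K_i(y_{i+1})$ on that component), which is what the paper's argument extracts by invoking all three irreducible plane projections $Z(\Tilde{g}^{(i)})$, $Z(g^{(i+1)})$, and $Z(r_k)$ simultaneously rather than only the last one. As written, your proof establishes irreducibility of $W_k$ only when the generic fiber over $Z(r_k)$ is a single point, so the involutive classes are not covered.
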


\begin{proof}
    The proof of Proposition \ref{2g_i} shows $(g^{(i)},H^{(i)})$ and $(\Tilde{g}^{(i)},H^{(i)})$ are identical ideals. Moreover, it is easy to check that $Z(g^{(i)})\cong Z(g^{(i)}, H^{(i)})=Z(\Tilde{g}^{(i)}, H^{(i)})\cong Z(\Tilde{g}^{(i)})$ hence a matching can be regarded as the sum of two couplings. As for the components, suppose at least one of $\Tilde{g}^{(i)}, g^{(i+1)}$ is reducible. In equation (\ref{rr}) we have $W_{jk}\cong Z(\Tilde{f}^{(i)}_j, f^{(i+1)}_k, r_{jk})\subset (\cp)^3$, the projections of which in $(\cp)^2$ are irreducible curves $\Tilde{f}^{(i)}_j=0, f^{(i+1)}_k=0, r_{jk}=0$ (see Corollary \ref{r_ij}), thus $W_{jk}$ is an (irreducible) component. Finally, notice that $\forall x_1, y_2 \in \cc$,
    $$
    (r_{jk}=0) \Rightarrow (\exists x_2, \Tilde{f}^{(i)}_j=f^{(i+1)}_k=0) \Rightarrow (\exists x_2, \Tilde{g}^{(i)}=g^{(i+1)}=0) \Rightarrow (R_i=0).
    $$
    Due to irreducibility, $r_{jk}|R_i=0$ must hold. The same logic applies to $W_k$ of equation (\ref{irr}).
\end{proof}

Equivalent representations make it more convenient to analyze the differential conditions on $Z(S)$.

\begin{proposition}\label{mani}
    Given a non-singular coupling $S=(\Tilde{g}^{(1)}, g^{(2)}, H^{(1)}, H^{(2)})$. For every component $W$ of $Z(S)$, there is a finite set $W_0$ such that $\forall p \in W-W_0$ and in a neighborhood of $p$, $W$ is a curve and also a function of any of the coordinates, i.e. $W\subset (\cp)^5$ is 1-dimensional and can be locally parameterized by any variable of $\{x_1,x_2,x_3,y_1,y_2\}$. 
\end{proposition}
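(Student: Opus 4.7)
The plan is to use the birational equivalence induced by $H^{(1)}, H^{(2)}$ to reduce the problem to a variety in $(\cp)^3$, establish that each component $W$ is one-dimensional, and then apply the implicit function theorem to each of the five coordinates. Since $H^{(i)}$ sets up a rational (birational) correspondence between $y_i$ and $x_{i+1}$, the component $W$ is birational to its projection $\overline{W}\subset (\cp)^3$ onto the $(x_1,x_2,y_2)$-coordinates, and $\overline{W}$ is a component of $Z(\Tilde{g}^{(1)}, g^{(2)})$. By Proposition~\ref{eqcoup}, the projections of $\overline{W}$ onto each of the three coordinate planes $(x_1,x_2)$, $(x_2,y_2)$, $(x_1,y_2)$ are irreducible plane curves, cut out respectively by an irreducible factor $\Tilde{f}^{(1)}$ of $\Tilde{g}^{(1)}$, an irreducible factor $f^{(2)}$ of $g^{(2)}$, and an irreducible factor $r$ of $R_1$. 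This shows $W$ is a one-dimensional irreducible variety.

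Next, to parameterize $W$ locally by $x_1$, I would first verify that $x_1$ is non-constant on $W$: if $x_1\equiv c$ on $W$, then $(x_1-c)\in\cc[x_1]$ would divide $\Tilde{f}^{(1)}$, contradicting the no-univariate-factor clause of Proposition~\ref{2g_i}. Since $\Tilde{f}^{(1)}$ is quadratic in $x_2$, the derivative $\partial \Tilde{f}^{(1)}/\partial x_2$ has degree $\leq 1$ in $x_2$, hence is not divisible by the irreducible $\Tilde{f}^{(1)}$; consequently it vanishes on the curve $\{\Tilde{f}^{(1)}=0\}$ at only finitely many points. Outside this finite set, the implicit function theorem expresses $x_2$ as an analytic function of $x_1$. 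Applying the same reasoning to $g^{(2)}(x_2,y_2)=0$ (quadratic in $y_2$ with no univariate factor, by Proposition~\ref{2g_i}) yields $y_2$ as an analytic function of $x_2$ away from a finite set, and the birational relations $H^{(1)}(y_1,x_2)=0$, $H^{(2)}(y_2,x_3)=0$ determine $y_1, x_3$ rationally with only finitely many exceptional points. Unioning these finite exceptional sets gives a finite subset of $W$ outside which $W$ is locally an analytic graph over $x_1$.

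The argument is symmetric for each of $x_2, x_3, y_1, y_2$: the same defining polynomials ($\Tilde{g}^{(1)}, g^{(2)}$, their irreducible factors, $r$, and $H^{(i)}$) are bidegree $(2,2)$ (or $(1,1)$ for $H^{(i)}$) with no univariate factors, by Lemma~\ref{rational}, Proposition~\ref{2g_i}, and Corollary~\ref{noncons}. Hence, replacing $\Tilde{f}^{(1)}$ in the argument above with whichever bivariate relation sits between the chosen parameter and its neighbour, one sees the corresponding partial derivative is not divisible by the relevant irreducible polynomial and vanishes only finitely often on the curve. Taking the union of the five resulting finite exceptional sets yields the single finite $W_0$ required.

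The main obstacle is ensuring that \emph{no} coordinate collapses to a constant on $W$, which would make parameterization by that coordinate impossible. This is exactly what non-singularity buys us: by Proposition~\ref{2g_i} and Corollary~\ref{noncons}, none of the defining polynomials has a univariate factor, so each coordinate's projection from $W$ to $\cp$ is dominant, and the implicit function theorem applies generically. All other parts of the proof are standard applications of irreducibility and the implicit function theorem; the non-singularity hypothesis is doing essentially all the work.
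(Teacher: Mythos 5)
Your proposal is correct and follows essentially the same route as the paper's proof: both establish one-dimensionality via the irreducible plane-curve projections from Proposition~\ref{eqcoup}, exclude constant coordinates using the no-univariate-factor property, and obtain local parameterization by any coordinate from the implicit function theorem away from the finite locus where a partial derivative meets the (coprime) irreducible defining polynomial. The only cosmetic difference is that you chain implicit functions through the birational reduction to $(\cp)^3$, whereas the paper defines $W_0$ directly as a union of derivative loci in $(\cp)^5$.
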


\subsection{Main problem (reformulated)}\label{fundamental-lemma}

It is necessary to reclaim the main problem of this article through our new definitions.

The ultimate goal is to classify all flexible meshes without (anti)deltoids, known as non-singular matchings. Since every matching is just two properly paired couplings, we first need to study the couplings. One will see in Section \ref{Component} that for any non-singular coupling, every component of its zero set must lie in 1 of 6 cases. In Section \ref{pseudo} and \ref{General}, we will demonstrate that all non-singular couplings, according to their components, fall into 7 classes. In the meanwhile, some examples are given to help the readers understand how to construct a matching by two couplings. Later in Section \ref{Res}, we will describe the classification of non-singular couplings in a different but equivalent way to serve the construction of non-singular matchings. Finally in Section \ref{back-to-com}, we will discover more restrictions on two given couplings so that they can form a matching. Therefore the final classification of flexible meshes is done.

We remind the readers that we admit the flexibility of a mesh even if the corresponding matching has an infinite zero set in $\cc$ but not necessarily in $\rr$.
\section{Classification of the components and couplings}\label{Component}

\subsection{Extension diagrams}

A brand new algebraic perspective to study the components of non-singular couplings starts here. According to Proposition \ref{eqcoup}, each component $W$ of coupling $(\Tilde{g}^{(1)}, g^{(2)}, H^{(1)}, H^{(2)})$ is obtained by solving a system with irreducible polynomials 

\begin{equation}\label{compo}
    W=\{(x_1,y_1,x_2,y_2,x_3)\in (\cp)^5 : \Tilde{f}^{(1)}=f^{(2)}=r=H^{(1)}=H^{(2)}=0\}
\end{equation}
where $\Tilde{f}^{(1)},f^{(2)}, r$ are irreducible factors of $\Tilde{g}^{(1)}, g^{(2)}, R_1$ respectively. If we fix $x_1$ as a parameter of system (\ref{compo}) and adapt the notations from Proposition \ref{mani}, locally on $W-W_0$, the rest variables are functions of $x_1$ hence can be regarded as algebraic elements of some extended fields over $K_1$ where
$$
K_i:=\cc(x_i).
$$
Similarly, if we switch the parameter to $x_2$, (locally on $W-W_0$) $x_1$ and $y_2$ can be regarded as algebraic elements on $K_2$. A formal definition goes as follows.

\begin{definition}\label{dfndia-l}
    Given a non-singular coupling $(\Tilde{g}^{(1)}, g^{(2)}, H^{(1)}, H^{(2)})$ and its component in equation (\ref{compo}), at some $p\in W$ there exist locally defined algebraic functions $y_i=Y_{i}(x_1), x_{i+1}=X_{i+1}(x_1)$ for $i=1,2$ such that
    $$
    \Tilde{f}^{(1)}(x_1,X_{2,p})=f^{(2)}(X_{2,p},Y_{2,p})=r(x_1,Y_{2,p})=H^{(1)}(Y_{1,p},X_{2,p})=H^{(2)}(Y_{2,p},X_{3,p})\equiv0.
    $$
    The \textbf{extension diagram of $W$ with respect to $x_1$ at $p$} is given by

    $$
    \scalebox{0.75}{\xymatrix{
        & K_1(X_{2,p},Y_{2,p}) & \\
        K_1(Y_{1,p})=K_1(X_{2,p}) \ar@{-}[ur]^{f^{(2)}} & & K_1(Y_{2,p})=K_1(X_{3,p})\ar@{-}[ul]\\
        & K_1\ar@{-}[ul]^{\Tilde{f}^{(1)}} \ar@{-}[ur]_{r}& \\
    }}   
    $$
\end{definition}

Similarly, we can define the extension diagrams with respect to $x_2, x_3$, etc...

Although the functions are locally defined, they are still enough to determine the component since irreducible algebraic curves can be uniquely determined by their local behaviors. So if we do not distinguish isomorphic field extensions, the extension diagram in Definition \ref{dfndia-l} is globally well-defined (on $W-W_0$) and hence uniquely characterizes $W$. In this regard, we prefer to slightly abuse the notations and simplify Definition \ref{dfndia-l} to:

\begin{definition}\label{dfndia}
    Given a non-singular coupling $(\Tilde{g}^{(1)}, g^{(2)}, H^{(1)}, H^{(2)})$ and its component in equation (\ref{compo}). The \textbf{extension diagram of $W$} (with respect to $x_1$) is given by
    \begin{equation}\label{diagram}
        \scalebox{0.75}{\xymatrix{
            & K_1(x_2,y_2) & \\
            K_1(y_1)=K_1(x_2) \ar@{-}[ur]^{f^{(2)}} & & K_1(y_2)=K_1(x_3)\ar@{-}[ul]\\
            & K_1\ar@{-}[ul]^{\Tilde{f}^{(1)}} \ar@{-}[ur]_{r}& \\
        }}   
    \end{equation}
    In particular, if we want to emphasize a trivial extension, e.g. $K_1(x_2)=K_1$, we will change the line type to
    $$
    \scalebox{0.75}{\xymatrix{K_1(x_2)& K_1\ar@{=}[l]\\}}
    $$
\end{definition}

This simplification provides us with a huge convenience when changing the parameters since locally on $W-W_0$, one can use any $x_i$ or $y_j$ to express the rest variables. And since $H^{(i)}=0$ determines a rational relation between $y_i$ and $x_{i+1}$, we formally have
$$
K_{i+1}=\cc(y_i).
$$

\begin{theorem}\label{flexequi}
    Given an ideal $M=(\Tilde{g}^{(1)}, \Tilde{g}^{(2)}, \Tilde{g}^{(3)}, \Tilde{g}^{(4)})\subset \cc[x_1,x_2,x_3,x_4]$ with all $\Tilde{g}^{(i)}$ non-singular, $M$ is a matching if and only if 
    $$
    \gcd(\text{Res}(\Tilde{g}^{(1)},\Tilde{g}^{(2)};x_2),\text{Res}(\Tilde{g}^{(3)},\Tilde{g}^{(4)};x_4))\neq 1
    $$
    if and only if for couplings $S_1=(\Tilde{g}^{(1)}, \Tilde{g}^{(2)}, H^{(1)}, H^{(2)})$ and $S_2=(\Tilde{g}^{(3)}, \Tilde{g}^{(4)}, H^{(3)}, H^{(4)})$, there exist components $W_1, W_2$ of $S_1, S_2$ respectively such that in the extension diagrams of $W_1$ and $W_2$, the minimal polynomials of $x_3$ on $K_1$ are identical.
\end{theorem}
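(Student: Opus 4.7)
The plan is to interpret all three conditions as assertions about projections of irreducible components of $Z(S_i)$ onto the $(x_1,x_3)$-plane, and then chain them together. First, by Proposition~\ref{eqcoup} we identify $Z(S_i)\cong Z(\Tilde{g}^{(i)},\Tilde{g}^{(i+1)})$, and by the fiber product description~(\ref{fiber}),
$$
Z(M)=\bigcup_{W_1,W_2}W_1\times_{\{x_1,x_3\}}W_2,
$$
with $W_1,W_2$ ranging over components of $S_1,S_2$. Hence $M$ is a matching precisely when at least one such fiber product is infinite.

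Next I would analyze each fiber product via the natural projection $\pi\colon (\cp)^3\to(\cp)^2_{(x_1,x_3)}$ forgetting the middle variable. By Proposition~\ref{mani} each component $W_i$ is an irreducible curve locally parameterized by $x_1$, so its image $C_i:=\pi(W_i)$ is an irreducible plane curve cut out by a unique (up to scalar) irreducible polynomial $\varphi_i(x_1,x_3)\in\cc[x_1,x_3]$; viewed in $K_1[x_3]$ and normalized, $\varphi_i$ is exactly the minimal polynomial of $x_3$ over $K_1$ in the extension diagram of $W_i$. The map $\pi\colon W_i\to C_i$ is generically finite (the forgotten variable is algebraic of degree at most $2$ over $K_1$ by Proposition~\ref{2g_i}). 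A dimension count then yields the key dichotomy: if $C_1=C_2$, a generic point of the common curve has finite non-empty $\pi$-fibers on both sides, so $W_1\times_{\{x_1,x_3\}}W_2$ dominates the curve and is infinite; if $C_1\neq C_2$, these distinct irreducible plane curves intersect in finitely many points, and finite fibers over finitely many points give only finitely many points in total. Combining this with the first step, $M$ is a matching iff some $\varphi_1$ equals some $\varphi_2$ up to scalar.

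The final step is to identify the non-trivial irreducible factors of $\text{Res}(\Tilde{g}^{(1)},\Tilde{g}^{(2)};x_2)$ with the polynomials $\varphi_1$ (and symmetrically for the second resultant and the $\varphi_2$'s). By standard resultant theory, the vanishing locus of $\text{Res}(\Tilde{g}^{(1)},\Tilde{g}^{(2)};x_2)$ in $(\cp)^2_{(x_1,x_3)}$ agrees with $\bigcup_{W_1}C_1=\pi(V(\Tilde{g}^{(1)},\Tilde{g}^{(2)}))$ outside the locus where the leading $x_2$-coefficients of $\Tilde{g}^{(1)}$ and $\Tilde{g}^{(2)}$ vanish simultaneously. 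But by Proposition~\ref{2g_i} the former is a polynomial in $x_1$ alone and the latter in $x_3$ alone, so their common zero set is finite, contributing no extra $1$-dimensional component. Hence the irreducible factors of the resultant are exactly the $\varphi_1$'s, and analogously on the $S_2$-side. Therefore the gcd of the two resultants is non-trivial iff some $\varphi_1=\varphi_2$, which by the previous step is equivalent to $M$ being a matching. The extension-diagram formulation is an immediate restatement, since $\varphi_i$ is by construction the minimal polynomial of $x_3$ over $K_1$ on $W_i$.

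The main obstacle is the resultant-factorization step, where one must rule out ``phantom'' factors coming from degree-drops; the non-singularity hypothesis enters exactly here, via the shape information in Proposition~\ref{2g_i}. Everything else is a clean exercise in the geometry of plane-curve projections and fiber products of irreducible curves.
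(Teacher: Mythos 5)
Your proposal is correct and follows essentially the same route as the paper: identify $Z(M)$ with the fiber product $Z(S_1)\times_{\{x_1,x_3\}}Z(S_2)$, project each side to the $(x_1,x_3)$-plane where the image is the zero set of the corresponding resultant with finite fibers (degree at most $2$ by Proposition~\ref{2g_i}), and conclude that $Z(M)$ is infinite iff the two resultants share a factor iff two components share the same minimal polynomial of $x_3$ over $K_1$. Your treatment is somewhat more careful than the paper's at the resultant step, explicitly ruling out phantom components from simultaneous degree drops, but the argument is the same.
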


\begin{remark}
    Each class in Theorem \ref{main} actually stands for a particular case of
    $$
    \gcd(\text{Res}(\Tilde{g}^{(1)},\Tilde{g}^{(2)};x_2),\text{Res}(\Tilde{g}^{(3)},\Tilde{g}^{(4)};x_4)),
    $$
    which means the classification of flexible meshes is essentially the classification of matchings by the greatest common divisor of the resultants.
\end{remark}

Following Definition \ref{dfndia}, we further define
$$
[K_i(x_j):K_i]:=\text{extension degree of } K_i(x_j)/K_i,\quad [K_i(y_j):K_i]:=\text{extension degree of } K_i(y_j)/K_i.
$$
Clearly, $[K_i(x_{j+1}):K_i]=[K_i(y_j):K_i]$ by previous discussion. For each component,
$$
[K_1(x_2):K_1]=[K_1(y_1):K_1] \in \{1,2\},\,\, [K_1(x_2,y_2):K_1] \in \{1,2,4\}
$$
due to the quadratic nature of $g^{(i)}$ or $\Tilde{g}^{(i)}$, which also implies
$$
[K_i(x_{i+1}):K_i]=2 \Leftrightarrow [K_{i+1}(x_i):K_{i+1}]=2 \Leftrightarrow \Tilde{g}^{(i)} \text{ is irreducible.}
$$
And for $K_1(y_2)$, as a subfield of $K_1(x_2,y_2)$, we must have
$$
[K_1(y_2):K_1] \in \{1,2,4\}.
$$
The above argument together with Definition \ref{dfndia} separates the components of non-singular couplings into 6 cases, all of which are equivalent to those we mentioned above table (\ref{c-table}).
\begin{definition}\label{casedefn}
Given $W$ a component of a non-singular coupling. According to its extension diagram, we say $W$ is of 
    \begin{itemize}
        \item [\textbf{Case 1}:] \textbf{purely-rational} if $[K_1(x_2):K_1]=[K_1(y_2):K_1]=1$;
        \item [\textbf{Case 2}:] \textbf{half-quadratic} if $[K_1(x_2):K_1]=1$ and $[K_1(y_2):K_1]=2$, or if $[K_3(x_2):K_3]=1$ and $[K_3(x_1):K_3]=2$;
        \item [\textbf{Case 3}:] \textbf{involutive-rational} if $[K_1(y_2):K_1]=1$ and $[K_1(x_2):K_1]=2$;
        \item [\textbf{Case 4}:] \textbf{purely-quadratic} if $[K_1(x_2,y_2):K_1]=[K_1(y_2):K_1]=[K_2(y_2):K_2]=[K_1(x_2):K_1]=2$;
        \item [\textbf{Case 5}:] \textbf{involutive-quadratic} if $[K_1(y_2):K_1]=[K_1(x_2):K_1]=2$ and $[K_1(x_2,y_2):K_1]=4$;
        \item [\textbf{Case 6}:] \textbf{quartic} if $[K_1(y_2):K_1]=4$.
    \end{itemize} 
\end{definition}

This definition shows that, depending on the reducibility of $g^{(i)}$ and $R_i$, the components from Proposition \ref{eqcoup} are not just represented in different ways but are essentially different in their own algebraic natures. Table (\ref{table}) tells that a given non-singular coupling cannot have components from different cases except for rational-quadratic class. This means only the components of \textbf{Case 3} and \textbf{Case 4} can coexist in one coupling during the decomposition. 

\begin{definition} \label{dfn-ps}
    Non-singular couplings $S$ are characterized as table (\ref{table}). More precisely, we say $S$ is
    \begin{itemize}
        \item [$\bullet$] \textbf{purely-rational} if $S$ only admits components of \textbf{Case 1};
        \item [$\bullet$] \textbf{half-quadratic} if $S$ only admits components of \textbf{Case 2};   
        \item [$\bullet$] \textbf{equimodular} if $S$ only admits components of \textbf{Case 3} or \textbf{Case 4}, moreover, $S$ is
        \begin{itemize}
            \item [$\bullet$] \textbf{involutive-rational} if $S$ only admits components of \textbf{Case 3};
            \item [$\bullet$] \textbf{rational-quadratic} if $S$ only admits components of \textbf{Case 3} and \textbf{Case 4};
            \item [$\bullet$] \textbf{purely-quadratic} if $S$ only admits components of \textbf{Case 4}; 
        \end{itemize}
        \item [$\bullet$] \textbf{involutive-quadratic} if $S$ only admits components of \textbf{Case 5};
        \item [$\bullet$] \textbf{quartic} if $S$ only admits components of \textbf{Case 6}.
    \end{itemize} 
\end{definition}
\underline{The definition only leads to a classification once we prove its completeness}, which will be a direct consequence of Theorem \ref{main-ps} and \ref{main-g}.

\subsection{First application}\label{PRproof}

Now we are ready to deal with the classification problem. As a starter, we tackle the simplest class of flexible meshes whose spherical linkages only contain (anti)isograms. It requires no advanced methods but is a good chance to show how the new algebraic definitions work in practice. This kind of mesh has a very unique algebraic structure and is called \textbf{PQ} (purely-rational) class. An equivalent definition of such a class can be found later in Definition \ref{11class}.

\begin{theorem}\label{PR}
    Given a mesh with notations in Fig.~\ref{sqmesh} and Fig.~\ref{squads} such that $\forall i\in \{1,2,3,4\}$, $(\lambda_i, \delta_i, \mu_i,\gamma_i)$ form an (anti)isogram but not (anti)deltoid. Set
    $$
    k_i=\frac{\sin(\gamma_i-\mu_i)}{\sin(\gamma_i)\pm \sin(\mu_i)}\in \rr-\{0\},\,\,F_i=\tan\left(\frac{\zeta_{i}+\tau_{i}}{2}\right) \in \rr\cup\{\infty\},
    $$
    $$
    N_i=\left\{\begin{array}{l}
            \left(\begin{array}{cc}
            -F_i & k_i \\
            1 & k_iF_i \\
            \end{array}\right) \text{ if } \lambda_i=\mu_i, \delta_i=\gamma_i, F_i\in \rr,\\
            \left(\begin{array}{cc}
            -1 & 0 \\
            0 & k_i \\
            \end{array}\right) \text{ if } \lambda_i=\mu_i, \delta_i=\gamma_i, F_i=\infty,\\            
            \left(\begin{array}{cc}
            k_i & -F_i \\
            k_iF_i & 1 \\
            \end{array}\right) \text{ if } \lambda_i+\mu_i=\delta_i+\gamma_i=\pi, F_i\in \rr,\\
            \left(\begin{array}{cc}
            0 & -1 \\
            k_i & 0 \\
            \end{array}\right) \text{ if } \lambda_i+\mu_i=\delta_i+\gamma_i=\pi, F_i=\infty.\\
    \end{array}\right.
    $$
    Consider all combinations of signs $'\pm'$ appear in $k_i$, the mesh is flexible if and only if there exists a combination such that $N_4N_3N_2N_1$ is a scalar matrix.
\end{theorem}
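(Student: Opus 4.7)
The plan is to reduce the theorem to a statement about composition of Möbius transformations, which correspond to $2\times 2$ matrices up to scalar. Since every quad is (anti)isogonal but not (anti)deltoid, Corollary \ref{redu=iso} guarantees that each $g^{(i)}$ is non-singular and reducible, so by Lemma \ref{rational} it factors as either $a_i(x_iy_i-k_i)(x_iy_i-k_i')$ (isogram: $b_i=c_i=0$) or $c_i(k_ix_i-y_i)(k_i'x_i-y_i)$ (antiisogram: $a_i=e_i=0$). In either case, each irreducible factor imposes a Möbius relation between $x_i$ and $y_i$, and combining with the invertible hinge relation $H^{(i)}(y_i,x_{i+1})=0$ produces a Möbius map $x_{i+1}=\phi_i(x_i)\in\text{PGL}_2(\cc)$. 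By Proposition \ref{2g_i}, $\tilde{g}^{(i)}$ inherits this factorization, and each component of $S_j$ is then purely-rational in the sense of Definition \ref{casedefn} (Case 1).

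First I would pin down the explicit shape of $k_i$. For the isogram case $\lambda_i=\mu_i$, $\gamma_i=\delta_i$, a direct computation from equation (\ref{bricard}) gives $a_i=-\sin(\mu_i+\gamma_i)/(2\sin\gamma_i)$ and $e_i=\sin(\mu_i-\gamma_i)/(2\sin\gamma_i)$, and solving the quadratic $a_i z^2+z+e_i=0$ yields the two roots $k_i,k_i'=(\sin\gamma_i\mp\sin\mu_i)/\sin(\gamma_i+\mu_i)$, which after the identity $2\cos((\gamma_i-\mu_i)/2)=\sin(\gamma_i-\mu_i)/\sin((\gamma_i-\mu_i)/2)\cdot\dots$ equals the stated $\sin(\gamma_i-\mu_i)/(\sin\gamma_i\pm\sin\mu_i)$. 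An analogous computation handles the antiisogram case. Then for each choice of sign, substituting $y_i=k_i/x_i$ (resp. $y_i=k_ix_i$) into $y_i=(F_i+x_{i+1})/(1-F_ix_{i+1})$ and solving for $x_{i+1}$ gives a Möbius map whose matrix coincides with the $N_i$ stated in the theorem; the degenerate case $F_i=\infty$ is handled by using $H^{(i)}=y_ix_{i+1}+1$ instead.

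Next I would invoke Theorem \ref{flexequi}. The matching condition becomes: there exist components $W_1\subset Z(S_1)$ and $W_2\subset Z(S_2)$ whose extension diagrams give the same minimal polynomial for $x_3$ over $K_1$. Since both components are in Case 1, this polynomial is linear, so $W_j$ parameterizes $x_3=\psi_j(x_1)$ where $\psi_1=\phi_2\circ\phi_1$ and $\psi_2=\phi_3^{-1}\circ\phi_4^{-1}$. The required equality $\psi_1=\psi_2$ is equivalent to $\phi_4\circ\phi_3\circ\phi_2\circ\phi_1=\mathrm{id}_{\cp}$. Translating to matrices, this is $N_4N_3N_2N_1\in\text{PGL}_2(\cc)$ being the identity, i.e.\ a scalar matrix in $GL_2(\cc)$. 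Because each $\tilde{g}^{(i)}$ contributes exactly two factors (one per sign choice), the freedom in the equation $N_4N_3N_2N_1=\text{scalar}$ is exactly the choice of signs in the $k_i$.

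The only genuinely delicate step is checking that the representation $\phi_i\leftrightarrow N_i$ is consistent across the four possible shape/$F_i$-combinations (isogram vs.\ antiisogram, $F_i$ finite vs.\ infinite): one must verify that the assignment is correct up to an overall scalar, since the scalar ambiguity is precisely what allows "$N_4N_3N_2N_1$ is a scalar matrix" to replace "$N_4N_3N_2N_1=I$." Everything else is a routine translation between factorizations of quadratic polynomials, Möbius transformations, and matrix products.
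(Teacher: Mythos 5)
Your proposal is correct and follows essentially the same route as the paper: factor each reducible $g^{(i)}$ via Lemma \ref{rational} into Möbius relations, compose with the hinge maps $H^{(i)}$ to get $x_{i+1}=N_i(x_i)$, and reduce flexibility (via the fiber-product/Theorem \ref{flexequi} criterion) to $N_4N_3N_2N_1$ acting as the identity in $\mathrm{PGL}_2$, i.e.\ being a scalar matrix. The only cosmetic difference is that the paper writes out just the isogram, $F_i\in\rr$ case and phrases the final step directly as the fiber product $W_1\times_{\{x_1,x_3\}}W_2$ being infinite rather than citing Theorem \ref{flexequi} by name.
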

We refer to \cite{nawratil-2022} for visualizations of such flexible meshes.
\begin{proof}
    We only prove the case when $\lambda_i=\mu_i, \delta_i=\gamma_i, F_i\in\rr$ for all $1\leq i \leq 4$ since the other cases are quite similar. First, it is easy to verify that $k_i$ is well-defined and nonzero since none of the conditions in Definition \ref{(anti)del} is satisfied. On the other hand, a given mesh uniquely determines a spherical linkage $(Q_i,F_i)_{i=1}^4$ (e.g. Fig.~\ref{squads} right) and each $Q_i$ determines a $g^{(i)}$, according to Lemma \ref{i-vi} and \ref{rational} we have
    $$
    g^{(i)}=a_i(x_iy_i-k_i)(x_iy_i-k_i')=a_ix_i^2y_i^2+x_iy_i+e_i
    $$
    where $\{k_i,k_i'\}=\left\{\frac{-1\pm\sqrt{1-4a_ie_i}}{2a_i}\right\}=\left\{\frac{\sin(\gamma_i-\mu_i)}{\sin(\gamma_i)\pm \sin(\mu_i)}\right\}$. Consider the coupling $S_1=(\Tilde{g}^{(1)}, g^{(2)}, H^{(1)}, H^{(2)})$, by Proposition \ref{eqcoup} each component of $S_1$ is in the form
    $$
    W_1=Z(\Tilde{f}^{(1)}, x_2y_2-k_2, H^{(1)}, H^{(2)})               
    $$
    where $\Tilde{f}^{(1)}$ is an irreducible factor of $\Tilde{g}^{(1)}$, which can be replaced by the factor of $g^{(1)}$ due to Proposition \ref{2g_i}, so
    \begin{equation}\label{1dm}
        W_1=Z(x_1y_1-k_1, x_2y_2-k_2, y_1-F_1y_1x_2-F_1-x_2, y_2-F_2y_2x_3-F_2-x_3),
    \end{equation}
    hence in the extension diagram, for $i=1,2$ we have
    $$
    x_{i+1}=\frac{y_i-F_i}{F_iy_i+1},\,\, y_i=\frac{0x_i+k_i}{x_i+0}.
    $$
    If we formally express the above relations in a matrix form
    $$
    N=\left (\begin{array}{cc}
    a & b \\
    c & d \\
    \end{array}\right):\,\, \cp \rightarrow \cp;\,\, x \mapsto N(x):=\frac{ax+b}{cx+d},\,\,ad-bc\neq 0,
    $$
    this is the so-called M\"{o}bius transformation on $\cp$ which is compatible with matrix multiplications. e.g.
    $$
    x_{i+1}=\left (\begin{array}{cc}
    1 & -F_i \\
    F_i & 1 \\
    \end{array}\right)(y_i)=\left (\begin{array}{cc}
    1 & -F_i \\
    F_i & 1 \\
    \end{array}\right)\left (\begin{array}{cc}
    0 & k_i \\
    1 & 0 \\
    \end{array}\right)(x_i)=
    \left(\begin{array}{cc}
    -F_i & k_i \\
    1 & k_iF_i \\
    \end{array}\right)(x_i).
    $$
    Let $N_i=\left(\begin{array}{cc}
    -F_i & k_i \\
    1 & k_iF_i \\
    \end{array}\right)$ we have $x_3=N_2N_1(x_1)$. Likewise, in a component $W_2$ of coupling $S_2=(\Tilde{g}^{(3)}, g^{(4)}, H^{(3)}, H^{(4)})$ we have $x_1=N_4N_3(x_3)$. Since the mesh should be flexible, the corresponding matching $M=(\Tilde{g}^{(1)},\Tilde{g}^{(2)},\Tilde{g}^{(3)},\Tilde{g}^{(4)})$ must have an infinite zero set $Z(M)\cong Z(S_1)\times_{\{x_1,x_3\}}Z(S_2)$ (Proposition \ref{eqcoup}) thus there exist components $W_1, W_2$ of $S_1, S_2$ respectively such that $W_1\times_{\{x_1,x_3\}}W_2$ is an infinite set. From equation (\ref{1dm}) we know $W_i$ is a 1-dimensional curve that can be parameterized by $x_1$. Therefore $\{x_3=N_2N_1(x_1),x_1=N_4N_3(x_3)\}$ must have infinitely many solutions so that $W_1\times_{\{x_1,x_3\}}W_2$ can be infinite. In other words,
    $$
    x_1=N_4N_3N_2N_1(x_1)=\left (\begin{array}{cc}
    n & 0 \\
    0 & n \\
    \end{array}\right)(x_1)=x_1.
    $$
\end{proof}

\begin{figure}
\hfill
\begin{overpic}[width=0.8\textwidth]{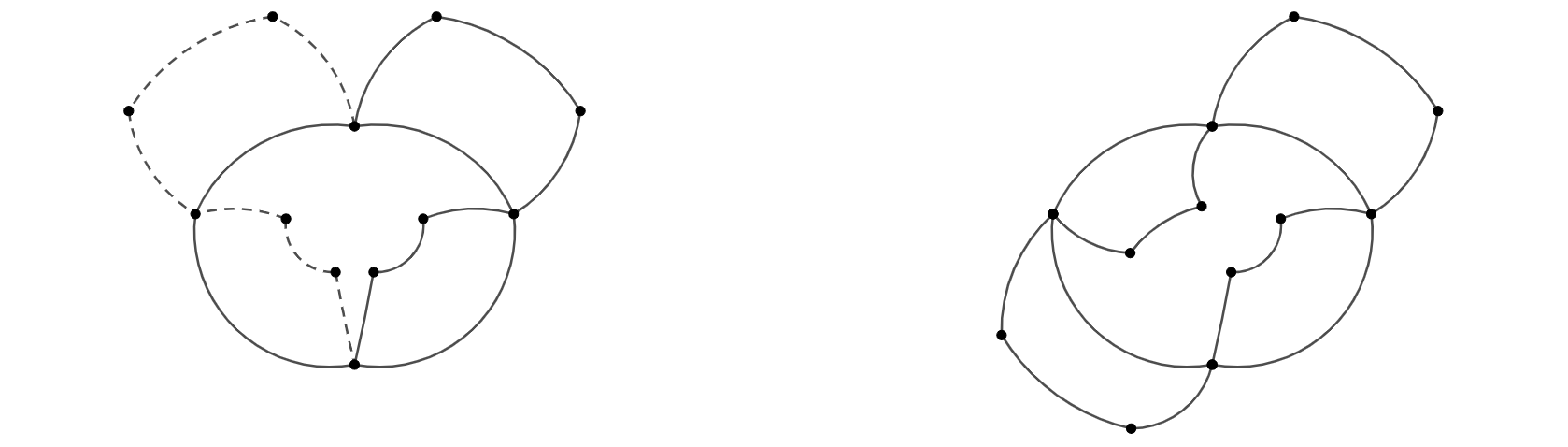}
    \put(27,9){\footnotesize\contour{white}{$Q_1$}}
    \put(29,22){\footnotesize\contour{white}{$Q_2$}}
    \put(16,9.5){\footnotesize\contour{white}{$Q_1'$}}
    \put(14,22){\footnotesize\contour{white}{$Q_2'$}}
    \put(30.5,6){\footnotesize\contour{white}{$\lambda_1$}}
    \put(26.5,17.5){\footnotesize\contour{white}{$\lambda_2$}}
    \put(12.5,5.5){\footnotesize\contour{white}{$\lambda_1'$}}
    \put(16.5,17.5){\footnotesize\contour{white}{$\lambda_2'$}}
    \put(82,9.5){\footnotesize\contour{white}{$Q_1$}}
    \put(84,22){\footnotesize\contour{white}{$Q_2$}}
    \put(85.5,6){\footnotesize\contour{white}{$\lambda_1$}}
    \put(81.5,17.5){\footnotesize\contour{white}{$\lambda_2$}}
    \put(69,9){\footnotesize\contour{white}{$\lambda_4=\lambda_1'$}}
    \put(66,21.5){\footnotesize\contour{white}{$\lambda_3=\lambda_2'$}}
    \put(67.5,5.5){\footnotesize\contour{white}{$Q_4$}}
    \put(71,16){\footnotesize\contour{white}{$Q_3$}}
\end{overpic}
  \hfill{}
    \caption{This is a (conceptual) spherical image, every curve lies on a great circle. Left: A coupling $(Q_1,F_1,Q_2,0)$ and its spherical reflection $(Q_2',-F_1,Q_1',0)$. Right: According to Fig.~\ref{squads}, the positions of adjacent quads should be upside down, so in order to get a spherical linkage, we need to flip $Q_2'$ respect to $\lambda_2'$ and $Q_1'$ respect to $\lambda_1'$.} 
    \label{Ex1}
\end{figure}

Here is another example that shows the existence of the flexible mesh whose spherical linkage contains two arbitrarily coupled quads in Fig.~\ref{squads} left, i.e. the matching that contains a coupling of an arbitrary class.

\begin{example}[Reflection]\label{skeleton}
    The main idea is extending a coupling of an arbitrary class to a matching. Notice that in table (\ref{table}), the class of the coupling is defined by its components, and the cases of components, given in Definition \ref{casedefn}, are independent of $H^{(2)}$, which means the class of a coupling $S=(\Tilde{g}^{(1)}, g^{(2)}, H^{(1)}, H^{(2)})=(g^{(1)}, g^{(2)}, H^{(1)}, H^{(2)})$ only depends on its first three polynomials. Now suppose we have a coupling in which $g^{(1)}, H^{(1)}, g^{(2)}$ are arbitrary. Set the coefficients of $g^{(3)}, g^{(4)}$ and $F_2, F_3, F_4$ as follows
    $$
    (a_3,b_3,c_3,e_3)=(a_2,c_2,b_2,e_2),\,\,(a_4,b_4,c_4,e_4)=(a_1,c_1,b_1,e_1),\,\,F_2=F_4=0,\,\,F_3=-F_1.
    $$
    In this regard $Z(\Tilde{g}^{(3)},\Tilde{g}^{(4)})$ is just a 'copy' of $Z(\Tilde{g}^{(1)},\Tilde{g}^{(2)})$ thus, by Proposition \ref{eqcoup}, the fiber product $Z(S_1)\times_{\{x_1,x_3\}} Z(S_2)\cong Z(\Tilde{g}^{(1)},\Tilde{g}^{(2)},\Tilde{g}^{(3)},\Tilde{g}^{(4)})$ is always an infinite set. Recall the paragraph above Section \ref{nonsglpro}, the geometric illustration of this example is simply the (spherical) reflection of a given coupling $(Q_1,F_1,Q_2,F_2=0)$, see Fig.~\ref{Ex1}. This trick actually works in all situations (including (anti)deltoids).
\end{example}

\section{Pseudo-planar type without (anti)deltoids}\label{pseudo}

\subsection{Section outline and classification theorem}

We start with the simplest meshes in which all $F_i\in\{0,\infty\}$. In such a type we have 
$$
\left\{\begin{array}{l}
   \Tilde{g}^{(i)}(x_i,x_{i+1})=g^{(i)}(x_i,x_{i+1})=a_ix_i^2 x_{i+1}^2  + b_i x_i^2 + c_ix_{i+1}^2 + x_ix_{i+1} + e_i \text{ if $F_i=0$,} \\
   \Tilde{g}^{(i)}(x_i,x_{i+1})=-b_ix_i^2 x_{i+1}^2  - a_i x_i^2 - e_ix_{i+1}^2 + x_ix_{i+1} - c_i \text{ if $F_i=\infty$.} \\
\end{array}\right.
$$
All equations are exactly in the same format as in the planar type, the only difference might be switching the coefficients and that is why we call it \textbf{pseudo-planar type}.\footnote{In Fig.~\ref{squads} left, planar type requires $\zeta_1=\tau_1=0$ but pseudo-planar type only requires $\sin(\zeta_1+\tau_1)=0$.} It is easy to check that the coefficients, when $F_i=\infty$, also fit the inequalities of Definition \ref{dfnpoly} and the non-sigularity of $\Tilde{g}^{(i)}$ remains. In this regard, we only need to develop the theory for $F_i=0$ and directly apply it to $F_i=\infty$ after switching the coefficients. Hence in this section, we regard $y_i=x_{i+1}$ and simply write a coupling as $S=(g^{(1)}(x_1,x_2), g^{(2)}(x_2,x_3))$. 

\begin{theorem}\label{main-ps}
    Given a non-singular coupling $S=(g^{(1)}, g^{(2)})$. According to Definition \ref{dfn-ps}, each class of $S$ is equivalent to one of the corresponding conditions. i.e. $S$ is
    \begin{itemize}
        \item [$\bullet$] \textbf{purely-rational} if both $g^{(1)}, g^{(2)}$ are reducible;
        \item [$\bullet$] \textbf{half-quadratic} if only one of $g^{(1)}, g^{(2)}$ is reducible;   
        \end{itemize} 
    Further, when both $g^{(1)}, g^{(2)}$ are irreducible, $S$ is
    \begin{itemize}
        \item [$\bullet$] \textbf{involutive-rational} if
        \begin{equation}\label{eq-c3}
            \left\{\frac{a_1}{a_2}=\frac{b_1}{c_2}=\frac{b_2}{c_1}=\frac{e_2}{e_1}\right\} \text{ and } \left\{\frac{a_1}{b_2}=\frac{b_1}{e_2}=\frac{a_2}{c_1}=\frac{c_2}{e_1}\right\};\footnote{We will frequently use such kind of proportional chains throughout the paper to represent the linear dependence between two sets of coefficients, so 0 is allowed among the denominators.}
        \end{equation}
        \item [$\bullet$] \textbf{rational-quadratic} if only one of systems (\ref{eq-c3}) holds;
        \item [$\bullet$] \textbf{purely-quadratic} if one of the following systems holds
        \begin{equation}\label{eq-c4}
            \left\{\begin{array}{l}
                 \left\{\frac{a_1 c_1}{a_2 b_2}=\frac{1- 4 a_1 e_1- 4 b_1 c_1}{1 - 4 a_2 e_2- 4 b_2 c_2}=\frac{b_1 e_1}{c_2 e_2}\neq 1\right\} \text{ or}  \\
                 \left\{\frac{4a_1 c_1}{1- 4 a_2 e_2- 4 b_2 c_2}=\frac{1- 4 a_1 e_1- 4 b_1 c_1}{4c_2 e_2}, b_1 e_1 = a_2 b_2 =0\right\} \text{ or} \\
                 \left\{\frac{4b_1e_1}{1- 4 a_2 e_2- 4 b_2 c_2}=\frac{1- 4 a_1 e_1- 4 b_1 c_1}{4a_2b_2}, a_1 c_1 = c_2 e_2 =0\right\};
            \end{array}
            \right.
        \end{equation}
        \item [$\bullet$] \textbf{involutive-quadratic} if
        \begin{equation}\label{eq-c5}
            \left\{\frac{a_1}{b_1}=\frac{c_1}{e_1}=\frac{a_2}{c_2}=\frac{b_2}{e_2}, \frac{a_1}{a_2}\neq \frac{b_2}{c_1}\right\};
        \end{equation}
        \item [$\bullet$] \textbf{quartic} if otherwise.
    \end{itemize} 
\end{theorem}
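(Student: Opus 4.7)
The plan is to translate the field-extension definitions of Definition \ref{casedefn} into explicit coefficient conditions by analyzing the factorization of the resultant
\[
R(x_1,x_3)=\text{Res}(g^{(1)},g^{(2)};x_2)\in\cc[x_1,x_3].
\]
In the pseudo-planar setting $\Tilde g^{(i)}=g^{(i)}$ (up to the coefficient swap when $F_i=\infty$), so by Proposition \ref{2g_i} the extension $[K_i(x_{i+1}):K_i]$ equals $1$ exactly when $g^{(i)}$ is reducible. Combined with Definition \ref{casedefn} this immediately yields the first two equivalences: \emph{purely-rational} iff both $g^{(i)}$ are reducible, \emph{half-quadratic} iff exactly one is.

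For the remaining four classes, I would assume both $g^{(i)}$ irreducible and write $g^{(1)}=A_1 x_2^2+B_1 x_2+C_1\in\cc[x_1][x_2]$ and $g^{(2)}=A_2 x_2^2+B_2 x_2+C_2\in\cc[x_3][x_2]$, so that
\[
R=(A_1C_2-A_2C_1)^2-(A_1B_2-A_2B_1)(B_1C_2-B_2C_1),
\]
a polynomial of bidegree at most $(4,4)$. By Proposition \ref{eqcoup}, the components of $Z(S)$ are in bijection with the irreducible factors of $R$, and if $r_k(x_1,x_3)$ is the factor giving component $W_k$, then $[K_1(x_3):K_1]$ for $W_k$ equals $\deg_{x_3}r_k$. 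The classification therefore reduces to the factorization pattern of $R$ in $K_1[x_3]$: a linear-in-$x_3$ factor corresponds to Case 3, an irreducible quadratic to Case 4 or 5, and an irreducible quartic to Case 6. Cases 4 and 5 are separated by whether $K_1(x_2)=K_1(x_3)$, i.e.\ whether the $x_3$-discriminant of $r_k$ and the $x_2$-discriminant of $g^{(1)}$ agree modulo squares in $K_1$.

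I would first analyze Case 3 via the Galois involution $\sigma$ of $K_1(x_2)/K_1$: there are exactly two candidate symmetric $x_3$-branches, obtained by pairing the two roots of $g^{(1)}(x_1,\cdot)$ with those of $g^{(2)}(\cdot,x_3)$, and each is rational in $x_1$ iff one of the two proportional systems in (\ref{eq-c3}) holds, as verified by matching coefficients of an ansatz $r=(\alpha x_1^2+\beta)x_3+(\gamma x_1^2+\delta)$ against $R$. Hence \emph{involutive-rational} (both linear branches, exhausting $\deg_{x_3}R=4$) is equivalent to both systems, while \emph{rational-quadratic} (one linear factor and one Case-4 quadratic factor, the latter automatic because the Case-3 branch forces a shared-discriminant structure on the remainder) is equivalent to exactly one system holding. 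When neither system holds, $R$ has no linear $x_3$-factor and must split either as two irreducible quadratics or remain irreducible; the split into two quadratics with $K_1(x_2)=K_1(x_3)$ (\emph{purely-quadratic}) is captured by the chain equalities (\ref{eq-c4}), whose three lines simply record which of the corner products $a_1c_1,b_1e_1,c_2e_2,a_2b_2$ vanish, while $K_1(x_2)\neq K_1(x_3)$ (\emph{involutive-quadratic}) reduces by the same discriminant computation to (\ref{eq-c5}), the side condition $a_1/a_2\neq b_2/c_1$ being precisely what rules out overlap with (\ref{eq-c4}). If none of (\ref{eq-c3})--(\ref{eq-c5}) hold, $R$ must be irreducible of degree $4$ in $x_3$, which is Case 6 (\emph{quartic}).

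The main obstacle will be bookkeeping in the last two steps: verifying that the three subcases of (\ref{eq-c4}) correctly handle every degeneration where the leading coefficients of $R$ as a polynomial in $x_3$ vanish, and checking that (\ref{eq-c3})--(\ref{eq-c5}) are mutually exclusive and exhaustive within the irreducible-$g^{(i)}$ regime. The underlying manipulations (matching coefficients of conjectured factors against the explicit form of $R$, and invoking the non-singularity inequalities from Definition \ref{dfnpoly} to discard spurious zeros) are lengthy but essentially mechanical, and the paper's Maple script can be used to validate each coefficient identity.
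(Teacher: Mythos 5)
Your overall architecture matches the paper's: reduce the first two bullets to reducibility of the $g^{(i)}$ via Proposition \ref{2g_i}, then classify the remaining cases through the factorization pattern of $R=\text{Res}(g^{(1)},g^{(2)};x_2)$, using proportionality of the coefficients of the (unique) minimal polynomial of $x_2$ to extract the linear branches, and a discriminant comparison to separate \textbf{Case 4} from \textbf{Case 5}. Two details are off but repairable: your ansatz $(\alpha x_1^2+\beta)x_3+(\gamma x_1^2+\delta)$ for a linear-in-$x_3$ factor contains only even powers of $x_1$ and so cannot represent either of the actual branches $x_3-kx_1$ and $x_1x_3-k'$ (the correct bidegree-$(1,1)$ ansatz, or the gcd argument the paper runs on equation (\ref{eq-coeff}), fixes this); and the claim that in the rational-quadratic case the residual quadratic is ``automatically'' of \textbf{Case 4} hides a real step, namely proving that this residual factor is irreducible --- the paper does this by contradiction, showing a further linear factor would force the second proportional system and hence violate non-singularity.

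The genuine gap is in how you propose to decide $K_1(x_2)=K_1(x_3)$ for the purely-quadratic versus involutive-quadratic split. You want to compare the $x_3$-discriminant of the factor $r_k$ with the $x_2$-discriminant of $g^{(1)}$ modulo squares in $K_1$; but $r_k$ is exactly what you do not know until you have already established that $R$ splits into two quadratics, so this route is circular as a way of deriving the explicit conditions (\ref{eq-c4}). The paper's key move, which your plan is missing, is to change the base field to $K_2=\cc(x_2)$ (legitimate because $K_1(x_2)$ and $K_2(x_1)$ are both the fraction field of $\cc[x_1,x_2]/(g^{(1)})$, whence $x_3\in K_1(x_2)\Leftrightarrow x_3\in K_2(x_1)\Leftrightarrow K_2(x_1)=K_2(x_3)$). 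Over $K_2$ both relevant discriminants become the explicit even quartics $\Delta_1=-4a_1c_1x_2^4+(1-4a_1e_1-4b_1c_1)x_2^2-4b_1e_1$ and $\Delta_2=-4a_2b_2x_2^4+(1-4a_2e_2-4b_2c_2)x_2^2-4c_2e_2$, and ``equal up to a square in $K_2$'' turns mechanically into the three proportionality chains of (\ref{eq-c4}) (the second and third lines handling the only possible square factor $x_2^2$). Without that pivot, neither (\ref{eq-c4}) nor the derivation of (\ref{eq-c5}) falls out of your $K_1$-based criterion, so you should either import Lemma \ref{samedsc} wholesale or rebuild its $K_2$ argument before the rest of your plan can be executed.
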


The proof has been split into cases which will be detailed studied.

\subsection{Case-by-case study}

\subsubsection{Purely-rational}

\begin{proof}(Theorem \ref{main-ps})
    Since all $g^{(i)}$ are reducible, by Proposition \ref{eqcoup}, each component has the form
    $$
    W=Z(f^{(1)}, f^{(2)}, r_{12}, H^{(1)}, H^{(2)})               
    $$
    where $f^{(1)}, f^{(2)}$ are irreducible factors of $g^{(1)},g^{(2)}$ respectively and $r_{12}=\text{Res}(f^{(1)},f^{(2)};x_2)$. Further, by Lemma \ref{rational} all extensions are trivial in the extension diagram of $W$, which implies $[K_1(x_3):K_1]=[K_1(x_2):K_1]=1$. Finally, according to Definition \ref{dfn-ps} we know $S$ is purely-rational.
\end{proof}

\subsubsection{Half-quadratic}

\begin{proof}(Theorem \ref{main-ps})
    Consider the extension diagram of the component of $S$. If $g^{(1)}$ is reducible but $g^{(2)}$ is not, we have the left-hand side of the following diagrams
    $$
    \scalebox{0.75}{\xymatrix{
                & K_1(x_2,x_3) & & & & K_3(x_1,x_2) &\\
                K_1(x_2) \ar@{-}[ur]^{g^{(2)}} & & K_1(x_3)\ar@{=}[ul] & & K_3(x_2) \ar@{-}[ur]^{g^{(1)}} & & K_3(x_1)\ar@{=}[ul]\\
                & K_1\ar@{=}[ul]^{f^{(1)}} \ar@{-}[ur] & & & & K_3\ar@{=}[ul]^{f^{(2)}} \ar@{-}[ur] & \\
            }}   
    $$
    where $f^{(1)}$ is an irreducible factor of $g^{(1)}$; or if $g^{(2)}$ is reducible but $g^{(1)}$ is not, change the parameter to $x_3$ we have the right-hand side of the above diagrams, where $f^{(2)}$ is an irreducible factor of $g^{(2)}$. Each of the diagrams implies a component of \textbf{Case 2} which are the only ones possessed by half-quadratic couplings according to Definition \ref{dfn-ps}.
\end{proof}

\begin{example}\label{HQ}
    Let us construct a matching in which both couplings $(g^{(1)}, g^{(2)}), (g^{(3)}, g^{(4)})$ respectively admit components $W_1, W_2$ of \textbf{Case 2}. First, according to Lemma \ref{rational} we may set $a_1=e_1=0$ and $b_3=c_3=0$, then set $g^{(2)}$ irreducible. To build a matching we need $W_1\times_{\{x_1, x_3\}} W_2$ to be an infinite set. On the other hand, in the extension diagrams of $W_1, W_2$ we have $x_2 = k x_1$ and $x_4=\frac{k'}{x_3}$. Plugging them into $g^{(2)}$ and $g^{(4)}$ ($g^{(4)}$ is undetermined yet) respectively should give two multiples of the same minimal polynomial of $x_3$ on $K_1$ according to Theorem \ref{flexequi}:
    $$
    \left\{\begin{array}{l}
        (k^2 a_2 x_1^2 + c_2 ) x_3^2 + k x_1 x_3 + (k^2 b_2 x_1^2+ e_2),  \\
        (c_4 x_1^2 + e_4 ) x_3^2 + k' x_1 x_3 + (k'^2 a_4 x_1^2 + k'^2 b_4), \\
    \end{array}\right.
    $$
    which should be equal up to a constant, i.e. the coefficients of $g^{(4)}$ are determined by
    $$
    \frac{k^2 a_2}{c_4}=\frac{c_2}{e_4}=\frac{k}{k'}=\frac{k^2 b_2}{k'^2 a_4}=\frac{e_2}{k'^2 b_4}.
    $$
\end{example}

\subsubsection{Involutive-rational}

\begin{lemma}\label{n-is}
    Given a non-singular coupling $S=(g^{(1)}, g^{(2)})$ where all $g^{(i)}$ are irreducible, set $R(x_1,x_3)=\text{Res}(g^{(1)},g^{(2)};x_2)$ we have
        $$
        \begin{array}{ll}
           (1)  & (x_3-kx_1)^2|R(x_1,x_3)\Leftrightarrow(x_3-kx_1)|R(x_1,x_3)\Leftrightarrow\frac{a_1}{a_2}=\frac{b_1}{c_2}=\frac{b_2}{c_1}=\frac{e_2}{e_1}=k; \\
           (2)  & (x_1x_3-k')^2|R(x_1,x_3)\Leftrightarrow(x_1x_3-k')|R(x_1,x_3)\Leftrightarrow\frac{a_1}{b_2}=\frac{b_1}{e_2}=\frac{a_2}{c_1}=\frac{c_2}{e_1}=\frac{1}{k'}.\\
        \end{array}
        $$
\end{lemma}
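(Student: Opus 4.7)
Plan: I will tackle both parts by explicit computation of the resultant combined with a projective symmetry that reduces (2) to (1). Writing $g^{(1)}$ and $g^{(2)}$ as quadratics in $x_2$ with
\[
\alpha_1=a_1x_1^2+c_1,\ \beta_1=x_1,\ \gamma_1=b_1x_1^2+e_1,\quad \alpha_2=a_2x_3^2+b_2,\ \beta_2=x_3,\ \gamma_2=c_2x_3^2+e_2,
\]
the closed-form resultant of two quadratics yields
\[
R(x_1,x_3)=A^2-BC,\qquad A=\alpha_1\gamma_2-\alpha_2\gamma_1,\ \ B=\alpha_1\beta_2-\alpha_2\beta_1,\ \ C=\beta_1\gamma_2-\beta_2\gamma_1,
\]
reducing the divisibility problem to an analysis of $R$ along the line $x_3=kx_1$ (respectively along $x_1x_3=k'$).

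For the easy direction of (1), substituting the four proportionalities $a_1=ka_2,\,b_1=kc_2,\,b_2=kc_1,\,e_2=ke_1$ into $A,B,C$ produces the explicit factorizations
\[
A=(c_1c_2-a_2e_1)(x_3-kx_1)(x_3+kx_1),\ \ B=(x_3-kx_1)(c_1-a_2x_1x_3),\ \ C=(x_3-kx_1)(c_2x_1x_3-e_1),
\]
so that $(x_3-kx_1)^2\mid R$, establishing the chain (four proportionalities) $\Rightarrow$ double divisibility $\Rightarrow$ single divisibility. For the converse direction, I would expand $R(x_1,kx_1)$ in even powers of $x_1$ and demand identical vanishing: the $x_1^8$ and $x_1^0$ coefficients force $a_1c_2=a_2b_1$ and $c_1e_2=b_2e_1$; the $x_1^6$ coefficient forces $(a_1-ka_2)(kc_2-b_1)=0$; the $x_1^2$ coefficient forces $(kc_1-b_2)(ke_1-e_2)=0$; and the $x_1^4$ coefficient supplies a consistency identity relating the middle terms. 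Choosing either branch of each disjunction and combining with the two leading identities propagates the remaining proportionalities after dividing out one coefficient at a time; the non-singular irreducibility hypotheses ensure every such division is legal, since Lemma \ref{rational} rules out $a_i=e_i=0$ and $b_i=c_i=0$, while non-singularity (Definition \ref{dfnsingular}) additionally forbids $a_i=c_i=0$ and $b_i=e_i=0$, so at most one member of each diagonal pair can vanish per index.

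Part (2) I would deduce from (1) by the projective substitution $x_3\mapsto 1/\tilde x_3$. Clearing denominators sends $g^{(2)}(x_2,x_3)$ to a polynomial of the same shape with swapped coefficients $(\tilde a_2,\tilde b_2,\tilde c_2,\tilde e_2)=(b_2,a_2,e_2,c_2)$, and the resultants transform as $\tilde R(x_1,\tilde x_3)=\tilde x_3^4\,R(x_1,1/\tilde x_3)$. Hence the factor $x_1x_3-k'$ of $R$ corresponds precisely to the factor $\tilde x_3-x_1/k'$ of $\tilde R$, so (2) is obtained by applying (1) to the transformed system with slope $k=1/k'$; unpacking the swapped coefficients in the resulting chain $\frac{a_1}{\tilde a_2}=\frac{b_1}{\tilde c_2}=\frac{\tilde b_2}{c_1}=\frac{\tilde e_2}{e_1}=1/k'$ yields exactly the claimed $\frac{a_1}{b_2}=\frac{b_1}{e_2}=\frac{a_2}{c_1}=\frac{c_2}{e_1}=1/k'$, and the double-divisibility statement transfers verbatim.

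The principal obstacle is the case bookkeeping in the reverse direction of (1): the two disjunctive conditions admit several a priori branches, and confirming that non-singularity plus irreducibility really collapses every branch to the full four-fold proportionality requires a disciplined case split on which of $a_i,b_i,c_i,e_i$ vanish simultaneously. A symbolic verification in the spirit of the Maple script accompanying the paper would be the natural way to certify that no degenerate sub-case escapes the argument.
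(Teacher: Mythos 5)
Your proposal is essentially correct, but the converse direction of (1) is handled by a genuinely different (and more laborious) route than the paper's. For ``proportionality $\Rightarrow$ $(x_3-kx_1)^2\mid R$'' your explicit factorizations of $A$, $B$, $C$ check out ($A=(c_1c_2-a_2e_1)(x_3-kx_1)(x_3+kx_1)$, etc.), and your reduction of (2) to (1) via $x_3\mapsto 1/\tilde x_3$, which swaps $(a_2,b_2,c_2,e_2)\mapsto(b_2,a_2,e_2,c_2)$ and preserves non-singularity and irreducibility by Lemma \ref{rational}, is a clean way to make precise the paper's remark that ``(2) is quite similar.'' Where you diverge is the implication $(x_3-kx_1)\mid R \Rightarrow$ proportionality: you force all five coefficients of $R(x_1,kx_1)$ to vanish and then run a branch analysis on the two disjunctions $(a_1-ka_2)(kc_2-b_1)=0$ and $(kc_1-b_2)(ke_1-e_2)=0$. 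This does close up --- in the degenerate branches (e.g.\ $a_1=a_2=0$) the $x_1^4$ coefficient is not merely a ``consistency identity'' but is indispensable: it reduces there to expressions like $k^2c_1^2(kc_2-b_1)^2=0$, which together with the at-most-one-zero consequence of Corollary \ref{redu=iso} recovers the missing proportionality --- but you have left exactly these branches unexecuted, and they are the only place the argument could fail. The paper avoids the case split entirely: from $(x_3-kx_1)\mid R$ it extracts (via Proposition \ref{eqcoup}) a component on which $x_3=kx_1$ and $x_2\notin K_1=\cc(x_1)$, so that $g^{(1)}(x_1,t)$ and $g^{(2)}(t,kx_1)$ are both scalar multiples of the minimal polynomial of $x_2$ over $K_1$; comparing their middle coefficients $x_1$ and $kx_1$ fixes the ratio as $1/k$ and all four proportionalities drop out in one line. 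Your approach is more elementary (no extension-field machinery) but buys that at the price of the combinatorial bookkeeping; the paper's buys uniformity at the price of invoking the component decomposition. If you keep your route, you should write out the degenerate branches explicitly rather than deferring to symbolic verification.
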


\begin{corollary}\label{n-is-c}
    Given a non-singular coupling $S=(g^{(1)}, g^{(2)})$ where all $g^{(i)}$ are irreducible, $S$ admits a component of \textbf{Case 3} if and only if one of systems (\ref{eq-c3}) holds if and only if
    \begin{equation}\label{===1}
        \frac{a_1 c_1}{a_2 b_2}=\frac{1- 4 a_1 e_1- 4 b_1 c_1}{1 - 4 a_2 e_2- 4 b_2 c_2}=\frac{b_1 e_1}{c_2 e_2}=1.
    \end{equation}
\end{corollary}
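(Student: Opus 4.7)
My plan is to establish two equivalences separately: (a) Case 3 $\iff$ one of systems (\ref{eq-c3}), and (b) (\ref{eq-c3}) $\iff$ (\ref{===1}). The backward direction of (a) follows immediately from Lemma \ref{n-is}: a factor $(x_3 - kx_1)$ or $(x_1 x_3 - k')$ of $R(x_1,x_3) = \text{Res}(g^{(1)}, g^{(2)}; x_2)$ produces an irreducible component on which $x_3$ is rational in $x_1$; combined with $[K_1(x_2):K_1] = 2$ (from irreducibility of $g^{(1)}$), this is exactly Case 3.

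For the forward direction of (a), I would argue as follows. On a Case 3 component, $x_3 = \phi(x_1) \in K_1$ is rational, and both roots of the irreducible quadratic $g^{(1)}(x_1,\cdot) \in K_1[x_2]$ satisfy $g^{(2)}(x_2, \phi) = 0$; hence the two quadratics in $x_2$ are proportional, and equating the three coefficients produces the polynomial relations
\begin{equation*}
a_2 x_1 \phi^2 - (a_1 x_1^2 + c_1)\phi + b_2 x_1 = 0, \quad c_2 x_1 \phi^2 - (b_1 x_1^2 + e_1)\phi + e_2 x_1 = 0.
\end{equation*}
A linear combination that eliminates $\phi^2$ gives $\phi = q_1 x_1/(p_1 x_1^2 + p_0)$ with $p_1 = a_2 b_1 - a_1 c_2$, $p_0 = a_2 e_1 - c_1 c_2$, $q_1 = a_2 e_2 - c_2 b_2$. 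Substituting this back into the first relation and matching coefficients in $x_1$ forces $p_0(b_2 p_0 - q_1 c_1) = 0$, $p_1(b_2 p_1 - q_1 a_1) = 0$, plus a middle-coefficient equation. If $p_1 = 0$, then $\phi = kx_1$ and system 1 follows via Lemma \ref{n-is}(1); if $p_0 = 0$, then $\phi = k'/x_1$ and system 2 follows via Lemma \ref{n-is}(2). In the remaining mixed subcase $p_0 p_1 \neq 0$, one obtains $a_1/p_1 = c_1/p_0 = b_2/q_1$ and the middle coefficient collapses to $a_2 q_1^2 = 0$; both $q_1 = 0$ and $a_2 = 0$ lead to contradictions with non-singularity (the former directly, since it forces $b_2 = 0$ and hence $a_2 e_2 = b_2 c_2 = 0$; the latter by substituting the resulting $\phi = b_2 x_1/(a_1 x_1^2 + c_1)$ into the second polynomial relation, which after expansion yields $c_2 b_2^2 = 0$, contradicting $b_2 c_2 \neq 0$).

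For equivalence (b), direct substitution of system 1 or system 2 into (\ref{===1}) verifies the forward direction. For the converse, assume $a_2 \neq 0$ (the degenerate case $a_2 = 0$ is handled symmetrically via non-singularity) and set $k = a_1/a_2$; the first identity of (\ref{===1}) then yields $b_2 = kc_1$, and the remaining two identities rearrange into the pair $c_1(b_1 - kc_2) = a_2(e_2 - ke_1)$ and $e_1(b_1 - kc_2) = c_2(e_2 - ke_1)$. These force either $b_1 = kc_2$ (which then gives $e_2 = ke_1$, establishing system 1) or $c_1 c_2 = a_2 e_1$ (setting $k' = c_1/a_2$ and using the first two identities of (\ref{===1}) verifies all four proportionalities of system 2). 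The main obstacle throughout is the mixed subcase in the structural analysis above, together with various edge cases from vanishing coefficients; all are resolved by repeatedly invoking the non-singularity hypothesis of Definition \ref{dfnsingular}, without which the classification would admit additional spurious rational parametrizations.
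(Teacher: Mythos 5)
Your overall architecture matches the paper's: the backward direction via Lemma \ref{n-is}, the forward direction via proportionality of $g^{(1)}(x_1,t)$ and $g^{(2)}(t,x_3)$ as (scalar multiples of) the minimal polynomial of $x_2$ over $K_1$, and an algebraic reduction of (\ref{===1}) to one of the chains in (\ref{eq-c3}). Your part (b) is essentially the paper's computation (the paper assumes $a_2e_2\neq 0$ or $b_2c_2\neq 0$ and substitutes $b_2=a_1c_1/a_2$, $c_2=b_1e_1/e_2$; your $u$--$v$ rearrangement is equivalent bookkeeping) and is fine modulo routine vanishing-coefficient subcases. Where you diverge is the middle of part (a): the paper writes $x_3=f/g$ in lowest terms and uses coprimality of the numerator and denominator triples (via Corollary \ref{redu=iso}) to conclude the ratio chain is a nonzero constant, hence $fg/x_1\in\cc\setminus\{0\}$ and $x_3=kx_1$ or $k'/x_1$ at once; you instead eliminate $\phi^2$ to get $\phi=q_1x_1/(p_1x_1^2+p_0)$ and run a coefficient-matching trichotomy.

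There is one concrete hole in that trichotomy: the elimination silently assumes $(p_1,p_0)\neq(0,0)$. If $p_1=p_0=0$ then necessarily $q_1=0$ as well (the eliminated relation reads $(p_1x_1^2+p_0)\phi=q_1x_1$), which happens exactly when $a_2b_1=a_1c_2$, $a_2e_1=c_1c_2$, $a_2e_2=c_2b_2$, i.e.\ $\frac{a_1}{b_1}=\frac{c_1}{e_1}=\frac{a_2}{c_2}=\frac{b_2}{e_2}$ --- the involutive-quadratic configuration of system (\ref{eq-c5}). In that regime your formula for $\phi$ is $0/0$, your two displayed relations are proportional rather than independent, and none of your three subcases applies; non-singularity alone does not dispose of it, contrary to your closing claim that all edge cases are resolved that way. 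The conclusion does survive: either invoke Lemma \ref{samemini} (if additionally $a_1/a_2\neq b_2/c_1$ the coupling admits only \textbf{Case 5} components, contradicting the \textbf{Case 3} hypothesis; if $a_1/a_2=b_2/c_1$ the single quadratic factors as $\frac{1}{a_2}(a_2x_1\phi-c_1)(a_2\phi-a_1x_1)$, giving $\phi=kx_1$ or $k'/x_1$), or argue directly that a root $\phi=f/g\in K_1$, in lowest terms, of $a_2x_1\phi^2-(a_1x_1^2+c_1)\phi+b_2x_1$ must satisfy $g\mid a_2x_1$ and $f\mid b_2x_1$, again forcing $\phi\in\{kx_1,\,k'/x_1\}$ after excluding constants by non-singularity --- which is in effect the paper's coprimality argument. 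Patch that subcase in and your proof is complete.
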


\begin{proof}
    We first prove the equivalence of the last two statements. It is easy to check that either of systems (\ref{eq-c3}) implies equation (\ref{===1}). Conversely, since $S$ is non-singular, we may assume $a_2e_2\neq 0$ (or $b_2c_2\neq 0$ otherwise). By substitution $\left\{b_2=\frac{a_1 c_1}{a_2}, c_2=\frac{b_1 e_1}{e_2}\right\}$, equation (\ref{===1}) implies (one of) systems (\ref{eq-c3}), i.e.
    $$
    \{a_1 c_1=a_2 b_2, b_1 e_1=c_2 e_2, (a_2e_2-a_1e_1)(a_2e_2-b_1c_1)=0\}.
    $$    
    Now suppose $S$ admits a component $W$ of \textbf{Case 3}. In the extension diagram of $W$, $g^{(1)}(x_1,t)\in K_1[t]$ determines the minimal polynomial of $x_2$ on $K_1$ since $g^{(1)}$ is irreducible. By Definition \ref{casedefn}, $x_3$ lies in $K_1$ hence $x_3=f/g$ where $f,g\in \cc[x_1]$ and $\gcd(f,g)=1$. $g^{(2)}(t,x_3)$, as well, determines the minimal polynomial of $x_2$ on $K_1$ so the ratios of the corresponding coefficients in $g^{(1)}(x_1,t)$ and $g^{(2)}(t,x_3)$ should coincide, i.e.
    \begin{equation}\label{eq-coeff}
        \frac{a_1 x_1^2 + c_1}{a_2 x_3^2 + b_2}=\frac{x_1}{x_3}=\frac{b_1 x_1^2 + e_1}{c_2 x_3^2 + e_2} \Leftrightarrow \frac{a_1 x_1^2 + c_1}{a_2 f^2 + b_2 g^2}=\frac{x_1}{f g}=\frac{b_1 x_1^2 + e_1}{c_2 f^2 + e_2 g^2}.
    \end{equation}
    According to Corollary \ref{redu=iso}, for $i=1, 2$, $\{a_i,...,e_i\}$ contains at most one zero, hence $\gcd(a_1 x_1^2 + c_1,x_1,b_1 x_1^2 + e_1)=1=\gcd(a_2 f^2 + b_2 g^2,f g,c_2 f^2 + e_2 g^2)$. This implies the second ratio chain in equation (\ref{eq-coeff}) is a constant in $\cc$ (i.e. $f g/x_1 \in \cc$) so it is either $x_3=k x_1$ or $x_3=k'/x_1$ for some nonzero constant $k$ or $k'$. Plugging them into equation (\ref{eq-coeff}) we obtain 
    $$
    \frac{a_1}{a_2}=\frac{b_1}{c_2}=\frac{b_2}{c_1}=\frac{e_2}{e_1}=k \text{ or } \frac{a_1}{b_2}=\frac{b_1}{e_2}=\frac{a_2}{c_1}=\frac{c_2}{e_1}=\frac{1}{k'}.
    $$
    The converse is directly from Lemma \ref{n-is}.
\end{proof}

\begin{proof}(Theorem \ref{main-ps})
    Given $g^{(i)}$ non-singular and 
    $$
    \left\{\frac{a_1}{a_2}=\frac{b_1}{c_2}=\frac{b_2}{c_1}=\frac{e_2}{e_1}=k, \frac{a_1}{b_2}=\frac{b_1}{e_2}=\frac{a_2}{c_1}=\frac{c_2}{e_1}=\frac{1}{k'}\right\}
    $$
    we know $kk'\neq 0$ hence $\gcd(x_3-kx_1,x_1x_3-k')=1$. Apply Lemma \ref{n-is} we have $(x_3-kx_1)^2(x_1x_3-k')^2|R(x_1,x_3)=\text{Res}(g^{(1)},g^{(2)};x_2)$. Notice that $R(x_1,x_3)$ is originally a 4 by 4 determinant with the degree of $x_1$ at most 4, so $R(x_1,x_3)$ only has two different factors $(x_3-kx_1)$ and $(x_1x_3-k')$. Consequently, the extension diagram of each component always implies $x_3\in K_1$ (i.e. \textbf{Case 3}) hence $S$ is involutive-rational.
\end{proof}

\begin{example}\label{IR}
    Let us consider an ideal $M=(g^{(1)},g^{(2)},g^{(3)},g^{(4)})$ with all $g^{(i)}$ irreducible and the coefficients satisfy
    $$
    \left\{\frac{a_1}{b_2}=\frac{b_1}{e_2}=\frac{a_2}{c_1}=\frac{c_2}{e_1}=\frac{1}{k'}=\frac{a_4}{c_3}=\frac{c_4}{e_3}=\frac{a_3}{b_4}=\frac{b_3}{e_4}\right\}
    $$
    where $k'\neq 0$ is a given parameter. Clearly, according to Lemma \ref{n-is}, both couplings $(g^{(1)},g^{(2)})$ and $(g^{(3)},g^{(4)})$ admit components of \textbf{Case 3} with the extension diagrams in which $x_3x_1-k'=0$. Thus $M$ is a matching according to Theorem \ref{flexequi}.
\end{example}

\subsubsection{Rational-quadratic}

The next lemma characterizes equimodular couplings in two aspects. \underline{Proposition \ref{mani} is crucial for its proof!}
\begin{lemma}\label{samedsc}
    Given a coupling $S=(g^{(1)}, g^{(2)})$ where all $g^{(i)}$ are irreducible. $S$ is equimodular if and only if in every extension diagram of its components, one of the following equivalent conditions holds:
    \begin{itemize}
        \item [(1)] $x_3 \in K_1(x_2)$;\quad (2) $x_3 \in K_2(x_1)$;\quad (3) $x_1 \in K_2(x_3)$;\quad (4) $x_1 \in K_3(x_2)$.
    \end{itemize} 
    On the other hand, $S$ is equimodular if and only if one of the following systems holds.
    \begin{equation}\label{eq-c3or4}
            \left\{\begin{array}{l}
                 \left\{\frac{a_1 c_1}{a_2 b_2}=\frac{1- 4 a_1 e_1- 4 b_1 c_1}{1 - 4 a_2 e_2- 4 b_2 c_2}=\frac{b_1 e_1}{c_2 e_2}\right\} \text{ or}  \\
                 \left\{\frac{4a_1 c_1}{1- 4 a_2 e_2- 4 b_2 c_2}=\frac{1- 4 a_1 e_1- 4 b_1 c_1}{4c_2 e_2}, b_1 e_1 = a_2 b_2 =0\right\} \text{ or} \\
                 \left\{\frac{4b_1e_1}{1- 4 a_2 e_2- 4 b_2 c_2}=\frac{1- 4 a_1 e_1- 4 b_1 c_1}{4a_2b_2}, a_1 c_1 = c_2 e_2 =0\right\}.
            \end{array}
            \right.
        \end{equation}
\end{lemma}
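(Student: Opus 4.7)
The plan is to split the lemma into a field-theoretic part (the equivalence of conditions (1)--(4) and their equivalence to equimodularity) and an algebraic part (translating the field condition into the coefficient systems in (\ref{eq-c3or4})). For the field-theoretic part, I would first note that as subfields of $\cc(W)$ one has $K_1(x_2) = \cc(x_1, x_2) = K_2(x_1)$ and $K_3(x_2) = \cc(x_2, x_3) = K_2(x_3)$, so (1) $\Leftrightarrow$ (2) and (3) $\Leftrightarrow$ (4) are immediate. To prove (1) $\Leftrightarrow$ (3), I would argue by degree: if $x_3 \in K_1(x_2)$ then $\cc(W) = K_1(x_2)$ has degree $2$ over $K_2 = \cc(x_2)$, and since $K_2(x_3)$ is also a degree-$2$ subfield (by irreducibility of $g^{(2)}$), it must coincide with $K_1(x_2) = K_2(x_1)$, yielding $x_1 \in K_2(x_3)$; the converse is symmetric. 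The equivalence with equimodularity then follows directly from Definition \ref{casedefn}: in both Case~3 ($x_3 \in K_1$) and Case~4 ($K_1(x_3) = K_1(x_2)$) one has $x_3 \in K_1(x_2)$, while conversely $x_3 \in K_1(x_2)$ forces $[K_1(x_3):K_1] \leq 2$, placing the component in Case~3 or Case~4.

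For the coefficient translation, I would solve $g^{(1)}$ as a quadratic in $x_1$ and $g^{(2)}$ as a quadratic in $x_3$, obtaining $K_2(x_1) = K_2(\sqrt{D_1(x_2)})$ and $K_2(x_3) = K_2(\sqrt{D_2(x_2)})$, where
\[
D_1 = -4 a_1 c_1 x_2^4 + \Delta_1 x_2^2 - 4 b_1 e_1, \quad D_2 = -4 a_2 b_2 x_2^4 + \Delta_2 x_2^2 - 4 c_2 e_2,
\]
with $\Delta_i = 1 - 4 a_i e_i - 4 b_i c_i$. Non-singularity together with irreducibility of $g^{(i)}$ excludes $D_i$ from being a square in $\cc(x_2)$ (via Gauss's lemma, using that $g^{(i)}$ has no $\cc[x_2]$ factor by Proposition~\ref{2g_i}). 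Equimodularity, which by the first part is equivalent to condition (2) holding on every component, now reads $K_2(\sqrt{D_1}) = K_2(\sqrt{D_2})$, i.e.\ $D_1/D_2 \in (\cc(x_2)^{\times})^2$.

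The final task is to translate ``$D_1 \equiv D_2$ mod squares'' into System~1, 2, or~3. Since $D_i$ are polynomials in $x_2^2$, their square classes are determined by their square-free parts as polynomials in $x_2$. Generically (none of $a_1 c_1, a_2 b_2, b_1 e_1, c_2 e_2$ vanishes), both $D_i$ have degree $4$ with degree-$4$ square-free part, and matching forces $D_1 \propto D_2$, which cross-multiplies to System~1. If instead $b_1 e_1 = 0$, then $D_1 = x_2^2 P_1$ for some degree-$2$ $P_1$, so the square-free part of $D_1$ drops to degree $2$; matching forces $D_2$ to have degree-$2$ square-free part as well, which (given $c_2 e_2 \neq 0$) means $a_2 b_2 = 0$, and the reduced proportionality $P_1 \propto D_2$ becomes System~2. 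System~3 is obtained by swapping the roles of $D_1$ and $D_2$. The main obstacle I anticipate is this exhaustive case analysis: carefully enumerating the degeneracies, using the paper's convention that $0$ is admissible in denominators to fold the remaining subcases back into System~1, and invoking irreducibility at each branch to exclude the pathological possibility that $D_i$ becomes a perfect square.
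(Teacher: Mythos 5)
Your proposal is correct and follows essentially the same route as the paper: identify $K_1(x_2)=K_2(x_1)$ and $K_3(x_2)=K_2(x_3)$ as presentations of the function field, reduce equimodularity to the common property of Cases 3 and 4, pass to the discriminants of $g^{(1)}$ and $g^{(2)}$ over $K_2$, and show that ``equal up to a square in $K_2$'' forces the three coefficient systems via the observation that the only possible square factor of $\Delta_i$ is $x_2^2$. The only cosmetic difference is in $(2)\Leftrightarrow(3)$, where you use a degree-two-subfield argument while the paper inverts the affine relation $x_3=f_1+g_1x_1$; both are equally valid.
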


\begin{proof}
    In the extension diagram, one should realize that $K_i(x_{i+1}), K_{i+1}(x_i)$ are just two representations of the same quotient field of the domain $\cc[x_i,x_{i+1}]/(g^{(i)})$, given $g^{(i)}$ is prime. This immediately shows $(1)\Leftrightarrow(2)$ and $(3)\Leftrightarrow(4)$. Now suppose $x_3 \in K_2(x_1)$ so there exist $f_1, g_1 \in K_2$ such that $x_3=f_1 + g_1 x_1$. Due to the irreducibility of $g^{(2)}$, $x_3 \notin K_2$ hence $g_1\neq0$, which implies $x_1=(x_3-f_1)/g_1\in K_2(x_3)$. This trick can show $(2)\Leftrightarrow(3)$. Next, if we treat $g^{(1)}(t,x_2)$ and $g^{(2)}(x_2,t)$ as polynomials in $K_2[t]$, condition (2) means their discriminants are equal up to a square in $K_2$. Let $\Delta_1$ and $\Delta_2$ be the corresponding discriminants of $g^{(1)}$ and $g^{(2)}$ respectively. In particular, $\Delta_1$ is not a square since $g^{(1)}$ is irreducible. Hence
    $$
    \Delta_1=-4 a_1 c_1 x_2^4 + (1 - 4 a_1 e_1- 4 b_1 c_1) x_2^2 - 4 b_1 e_1=\left\{
    \begin{array}{c}
        -4 a_1 c_1(x_2^2-\xi)(x_2^2-\eta) \text{ if } a_1 c_1\neq 0, \\
        (1 - 4 a_1 e_1- 4 b_1 c_1)(x_2^2-\rho) \text{ if } a_1 c_1=0 \\
    \end{array}
    \right.
    $$
    where $\xi,\eta,\rho \in \cc$ and $\xi \neq \eta$. So the only square factor that $\Delta_1$ might have is $x_2^2$ (i.e. $b_1 e_1=0$). All in all, systems (\ref{eq-c3or4}) are just saying $\frac{\Delta_1}{\Delta_2}$ is a square in $K_2$ so formally $K_2(x_1)=K_2(\sqrt{\Delta_1})=K_2(\sqrt{\Delta_2})=K_2(x_3)$ hence systems (\ref{eq-c3or4}) $\Leftrightarrow$ condition (2). Finally, when all $g^{(i)}$ are irreducible, by Definition \ref{casedefn} it is easy to see that components of \textbf{Case 3} and \textbf{Case 4} can be characterized by their common property $K_1(x_2,x_3)=K_1(x_2)$, i.e. condition (1) $\Leftrightarrow S$ is equimodular.
\end{proof}

\begin{proof}(Theorem \ref{main-ps})
    Since we have already proved Theorem \ref{main-ps} for involutive-rational class, together with Corollary \ref{n-is-c} we can conclude that, if $S$ admits components of \textbf{Case 3} and \textbf{Case 4}, then only one of systems (\ref{eq-c3}) holds. Conversely, suppose $\left\{\frac{a_1}{a_2}=\frac{b_1}{c_2}=\frac{b_2}{c_1}=\frac{e_2}{e_1}(=k)\right\}$ holds but $\left\{\frac{a_1}{b_2}=\frac{b_1}{e_2}=\frac{a_2}{c_1}=\frac{c_2}{e_1}\right\}$ fails. According to Corollary \ref{n-is-c} we know $S$ admits a component of \textbf{Case 3}. Symbolic computation can show
    $$
    (a_2,b_2,c_2,e_2)=\left(\frac{a_1}{k},kc_1,\frac{b_1}{k},ke_1\right) \Rightarrow \frac{\text{Res}(g^{(1)},g^{(2)};x_2)}{(x_3-kx_1)^2}=\left(\frac{a_1b_1}{k^2}x_1^2x_3^2+...+c_1e_1\right).
    $$
    We claim that $r=\frac{a_1b_1}{k^2}x_1^2x_3^2+...+c_1e_1$ is irreducible and quadratic in $x_3$. Otherwise, according to Corollary \ref{noncons}, there must be an irreducible factor $r'$ of $r$ which is linear in $x_3$ hence we have a component $Z(g^{(1)}, g^{(2)}, r', H^{(1)}, H^{(2)})$ of \textbf{Case 3} by Definition \ref{casedefn}. Consider the proof of Corollary \ref{n-is-c} we know $r'=x_3-k'x_1$ or $x_1x_3-k'$ for some $k'\neq 0$. By Lemma \ref{n-is}, it must be $r'=x_3-k'x_1=x_3-kx_1$ since we cannot have $\left\{\frac{a_1}{b_2}=\frac{b_1}{e_2}=\frac{a_2}{c_1}=\frac{c_2}{e_1}\right\}$. In the meanwhile, the highest degrees of $x_1, x_3$ in $r$ are at most 2. This leads to $a_1b_1=c_1e_1=0$ given $r'|r$ hence $g^{(1)}$ is either singular or reducible by Lemma \ref{rational}, a contradiction! So finally we have another component
    $$
    (g^{(1)}, g^{(2)}, r, H^{(1)}, H^{(2)})
    $$
    with the extension diagram in which $[K_1(x_3):K_1]=2$ (i.e. $x_3\notin K_1$). Notice that 
    $$
    \left\{\frac{a_1}{a_2}=\frac{b_1}{c_2}=\frac{b_2}{c_1}=\frac{e_2}{e_1}\right\} \Rightarrow \left\{\frac{a_1 c_1}{a_2 b_2}=\frac{1- 4 a_1 e_1- 4 b_1 c_1}{1 - 4 a_2 e_2- 4 b_2 c_2}=\frac{b_1 e_1}{c_2 e_2}\right\}.
    $$
    Hence by Lemma \ref{samedsc} we know $W$ is of \textbf{Case 4}. The proof is similar when $\left\{\frac{a_1}{b_2}=\frac{b_1}{e_2}=\frac{a_2}{c_1}=\frac{c_2}{e_1}\right\}$ holds but $\left\{\frac{a_1}{a_2}=\frac{b_1}{c_2}=\frac{b_2}{c_1}=\frac{e_2}{e_1}\right\}$ fails.
\end{proof}

\subsubsection{Purely-quadratic}

The proof of Theorem \ref{main-ps} is a direct consequence of Corollary \ref{n-is-c} and Lemma \ref{samedsc}.

This class is currently "floating in the air" since we only proved its existence. In Theorem \ref{main-ps} when the condition of purely-quadratic couplings holds, we are expecting a component of \textbf{Case 4} with an explicit representation from equation (\ref{irr})
$$
W=Z(g^{(1)}, g^{(2)}, r_k, H^{(1)}, H^{(2)})
$$
where $r_k(x_1,x_3)$ is an irreducible factor of $\text{Res}(g^{(1)},g^{(2)};x_2)$. In fact, for every component of \textbf{Case 4}, we have $r_k$ in the form
$$
r_k(x_1,x_3)=(\alpha_{22} x_1^2 + \alpha_{02} ) x_3^2 + \alpha_{11}x_1 x_3 + (\alpha_{20} x_1^2+ \alpha_{00}).
$$
The proof can be found later in Corollary \ref{even-term} where all $\alpha_{ij}$ are given explicitly.

\subsubsection{Involutive-quadratic}

The proof of Theorem \ref{main-ps} is a consequence of the following lemma.

\begin{lemma}\label{samemini}
    Given a non-singular coupling $S=(g^{(1)},g^{(2)})$, then $S$ only admits components of \textbf{Case 5} if and only if system (\ref{eq-c5}) holds. Moreover, the factorization of the resultant is
    $$
    \text{Res}(g^{(1)},g^{(2)};x_2)=\frac{b_1}{a_1}(a_2 x_1 x_3^2 - (a_1 x_1^2 + c_1) x_3 + b_2 x_1)^2.
    $$
\end{lemma}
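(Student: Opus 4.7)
The plan is to exploit the observation that under (\ref{eq-c5}) both $g^{(1)}$ and $g^{(2)}$ acquire a common quadratic factor in $x_2$, which forces the resultant to collapse into a square. Set $\lambda = a_1/b_1 = c_1/e_1 = a_2/c_2 = b_2/e_2$; the relations (\ref{eq-c5}) combined with non-singularity make all eight coefficients nonzero (since $a_1 e_1 = b_1 c_1$ and $a_2 e_2 = b_2 c_2$, and non-singularity rules out either side vanishing), so $\lambda \in \cc^{\times}$. Writing $P_1 = a_1 x_1^2 + c_1$ and $P_2 = a_2 x_3^2 + b_2$, one checks $g^{(1)} = P_1(x_2^2 + 1/\lambda) + x_1 x_2$ and $g^{(2)} = P_2(x_2^2 + 1/\lambda) + x_2 x_3$. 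Applying the standard formula $\text{Res}_{x}(Ax^2+Bx+C,\, Dx^2+Ex+F) = (AF-CD)^2 + (BD-AE)(BF-CE)$ with $(A,B,C)=(P_1,x_1,P_1/\lambda)$ and $(D,E,F)=(P_2,x_3,P_2/\lambda)$ gives $AF-CD=0$, $BD-AE=r$, and $BF-CE=r/\lambda$, so $\text{Res}(g^{(1)},g^{(2)};x_2)=r^2/\lambda=(b_1/a_1)r^2$, yielding the claimed factorization.

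To close the forward direction I would show $r$ is irreducible in $\cc[x_1,x_3]$: its discriminant as a polynomial in $x_3$ is $(a_1 x_1^2 + c_1)^2 - 4 a_2 b_2 x_1^2$, which is even in $x_1$ and is a square in $\cc[x_1]$ exactly when $a_2 b_2 = 0$ or $a_1 c_1 = a_2 b_2$; both are excluded by our hypotheses. Hence, by Proposition \ref{eqcoup}, the coupling admits a unique component $W = Z(g^{(1)}, g^{(2)}, r)$. To see $W$ lies in Case 5 I count points in a generic $x_1$-fiber: from $g^{(1)}=0$ one extracts $x_2^2 + 1/\lambda = -x_1 x_2/P_1$, which, substituted into $g^{(2)}$, yields the identity $g^{(2)} \equiv -x_2\, r/P_1 \pmod{g^{(1)}}$. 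So for generic $x_1$ with $P_1 \neq 0$, every $(x_2,x_3)$ with $g^{(1)}=r=0$ and $x_2 \neq 0$ automatically satisfies $g^{(2)}=0$; counting yields $2 \times 2 = 4$ points per fiber, i.e.\ $[K_1(x_2,x_3):K_1]=4$. Together with $[K_1(x_2):K_1]=[K_1(x_3):K_1]=2$ from the irreducibility of $g^{(1)}$ and $r$, this is precisely Case 5.

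For the converse, suppose $S$ admits only Case 5 components. Both $g^{(i)}$ must then be irreducible, else components of Case 1 or 2 would appear. Fix one component and let $r(x_1,t) \in K_1[t]$ be the minimal polynomial of $x_3$ over $K_1$, an irreducible quadratic. Because Case 5 also forces $g^{(2)}(x_2,t) \in K_1(x_2)[t]$ to be the irreducible minimal polynomial of $x_3$ over $K_1(x_2)$ up to a $K_1(x_2)$-scalar, and since $r \in K_1[t] \subset K_1(x_2)[t]$ is quadratic in $t$ and annihilates $x_3$, one must have $r = \mu(x_2)\, g^{(2)}(x_2,t)$ for some $\mu(x_2) \in K_1(x_2)$. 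Writing $\mu(x_2) = u + v x_2$ with $u,v \in K_1$ in the basis $\{1, x_2\}$ of $K_1(x_2)/K_1$, and reducing $x_2^2 = -(x_1 x_2 + b_1 x_1^2 + e_1)/(a_1 x_1^2 + c_1)$ via $g^{(1)}=0$, the condition that the $t$-coefficient $\mu \cdot x_2$ of $\mu g^{(2)}$ lie in $K_1$ forces $u = v x_1/(a_1 x_1^2 + c_1)$. Feeding this back, the requirement that the $t^2$- and $t^0$-coefficients $\mu(a_2 x_2^2 + c_2)$ and $\mu(b_2 x_2^2 + e_2)$ also lie in $K_1$ reduces, after clearing denominators, to the polynomial identities $c_2(a_1 x_1^2 + c_1) = a_2(b_1 x_1^2 + e_1)$ and $e_2(a_1 x_1^2 + c_1) = b_2(b_1 x_1^2 + e_1)$. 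Matching coefficients of $x_1^2$ and $x_1^0$ gives exactly $a_1/b_1 = c_1/e_1 = a_2/c_2 = b_2/e_2$. The inequality $a_1/a_2 \neq b_2/c_1$ must hold as well, for otherwise the forward discriminant analysis (with $a_1 c_1 = a_2 b_2$) shows $r$ splits into two linear factors, and combining this equality with the main relation collapses (\ref{eq-c5}) into both chains of (\ref{eq-c3}), so all components would be of Case 3, contradicting the hypothesis.

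The hard step is the converse: establishing the proportionality $r = \mu(x_2)\, g^{(2)}(x_2,t)$ in $K_1(x_2)[t]$ and then extracting the rigid linear dependence (\ref{eq-c5}) from the three requirements that $\mu g^{(2)}$ have all $t$-coefficients in $K_1$ after reduction modulo $g^{(1)}$. The forward direction is largely mechanical once one spots the shared factor $x_2^2 + 1/\lambda$; the congruence $g^{(2)} \equiv -x_2 r/P_1 \pmod{g^{(1)}}$ then simultaneously delivers the square resultant $r^2/\lambda$ and the Case 5 classification by direct point-counting over $\mathbb{A}^1_{x_1}$.
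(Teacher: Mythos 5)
Your proof is correct, and while it shares the paper's overall skeleton (matching coefficients of minimal polynomials, then using irreducibility of the quadratic factor to force the inequality $\frac{a_1}{a_2}\neq\frac{b_2}{c_1}$), the mechanics differ at every step in ways worth noting. For the factorization, the paper simply invokes symbolic computation, whereas you isolate the shared factor $x_2^2+1/\lambda$ in $g^{(1)}=P_1(x_2^2+1/\lambda)+x_1x_2$ and $g^{(2)}=P_2(x_2^2+1/\lambda)+x_2x_3$ and feed it into the closed-form resultant of two quadratics, which makes $AF-CD=0$ and hence the square $r^2/\lambda$ visible by hand; this is a genuine improvement in transparency. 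To place the component in \textbf{Case 5}, the paper excludes \textbf{Case 3} via Corollary \ref{n-is-c} and \textbf{Case 4} via Lemma \ref{samedsc}, while you compute $[K_1(x_2,x_3):K_1]=4$ directly from the congruence $g^{(2)}\equiv -x_2r/P_1 \pmod{g^{(1)}}$ and a generic fiber count — more self-contained, though it quietly relies on the unique-component decomposition of Proposition \ref{eqcoup} to identify the four fiber points as lying on one irreducible curve. In the converse, the paper exploits the symmetry $x_3\notin K_1(x_2)\Rightarrow x_2\notin K_1(x_3)$ and reads off the proportionality of $g^{(1)}(x_1,t)$ and $g^{(2)}(t,x_3)$ as minimal polynomials of $x_2$ over $K_1(x_3)$, which yields system (\ref{eqsamemini}) in one line; your route through $r=\mu(x_2)g^{(2)}$ with $\mu=u+vx_2$ and reduction modulo $g^{(1)}$ reaches the same two polynomial identities $c_2P_1=a_2Q_1$ and $e_2P_1=b_2Q_1$ with more computation but no appeal to that symmetry. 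Both arguments are sound; the only points you should make explicit are that $v\neq 0$ (else $\mu=0$) and that the leading coefficient $a_2x_2^2+c_2$ of $g^{(2)}$ in $x_3$ is nonzero in $K_1(x_2)$, so that $g^{(2)}$ really is a scalar multiple of the minimal polynomial of $x_3$ over $K_1(x_2)$.
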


\begin{remark}\label{ae=bc}
    Under the condition of system (\ref{eq-c5}), we have $a_ie_i=b_ic_i$ for both $i=1,2$. In a real flexible linkage, this implies the so-called \textbf{orthodiagonal} property which is $\cos(\lambda_i)\cos(\mu_i)=\cos(\gamma_i)\cos(\delta_i)$.
\end{remark}

\subsubsection{Quartic}

\begin{proof}(Theorem \ref{main-ps})
    Suppose $S$ does not satisfy the requirements of other classes. In particular, both $g^{(1)}, g^{(2)}$ are irreducible. Firstly, $S$ does not have components of \textbf{Case 1} or \textbf{Case 2} which requires reducibility on $g^{(i)}$. Secondly, $S$ does not have components of \textbf{Case 3} or \textbf{Case 4} by Corollary \ref{n-is-c} and Lemma \ref{samedsc}. Finally, $S$ does not have components of \textbf{Case 5} by Lemma \ref{samemini}. Thus $S$ only admits component of \textbf{Case 6}.
\end{proof}

\section{General type without (anti)deltoids}\label{General}

\subsection{Section outline and classification theorem}

In this section, we study the couplings from matchings of general type in which not all $F_i\in \{0, \infty \}$. So, up to a rotation, we may always assume $F_1\notin \{0, \infty \}$. We follow our approach from pseudo-planar type and make a case-by-case study. It is interesting to mention that condition $F_1\notin \{0, \infty \}$ implies even more restrictions on coefficients compared to pseudo-planar type. One will see that involutive-rational, rational-quadratic, and purely-quadratic couplings of general type are completely useless for the construction of flexible meshes. So, to avoid unnecessary discussion, we will treat them all together as equimodular class (see Definition \ref{dfn-ps}).

\begin{theorem}\label{main-g}
    Given a non-singular coupling $S=(\Tilde{g}^{(1)}, g^{(2)}, H^{(1)}, H^{(2)})$ where $F_1\notin\{0,\infty\}$. According to Definition \ref{dfn-ps}, each class of $S$ is equivalent to one of the corresponding conditions. i.e. $S$ is
    \begin{itemize}
        \item [$\bullet$] \textbf{purely-rational} if both $\Tilde{g}^{(1)}, g^{(2)}$ are reducible;
        \item [$\bullet$] \textbf{half-quadratic} if only one of $\Tilde{g}^{(1)}, g^{(2)}$ is reducible;   
        \end{itemize} 
    Further, when both $\Tilde{g}^{(1)}, g^{(2)}$ are irreducible, $S$ is
    \begin{itemize}
        \item [$\bullet$] \textbf{equimodular} if 
        \end{itemize}
        \begin{equation}\label{eq-c4-g}
        \left\{\frac{a_1}{e_1}=\frac{b_1}{c_1}, \frac{a_2}{e_2}=\frac{c_2}{b_2}, F_1=\pm 1, \frac{1-4a_1e_1-4b_1c_1-8a_1c_1}{16a_2b_2}=\frac{16a_1c_1}{1-4a_2e_2-4b_2c_2-8a_2b_2}\right\};   
        \end{equation}
        \begin{itemize}
        \item [$\bullet$] \textbf{involutive-quadratic} if
        \begin{equation}\label{eq-c5-g}
            \left\{\begin{array}{l}
                 \left\{\frac{a_1}{b_1}=\frac{c_1}{e_1}=\frac{a_2}{c_2}=\frac{b_2}{e_2}=-1, F_1\neq\pm 1\right\} \text{ or}  \\
                 \left\{\frac{a_1}{b_1}=\frac{c_1}{e_1}=\frac{a_2}{c_2}=\frac{b_2}{e_2}=-1, F_1=\pm 1, 256a_1c_1a_2b_2\neq 1\right\};
            \end{array}
            \right.
        \end{equation}
        \item [$\bullet$] \textbf{quartic} if otherwise.
    \end{itemize}
\end{theorem}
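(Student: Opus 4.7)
The plan is to mirror the case-by-case structure of Theorem \ref{main-ps}, the new ingredient being that when $F_1 \notin \{0,\infty\}$ the polynomial $\tilde{g}^{(1)}$ no longer coincides with $g^{(1)}$ but picks up genuine cross terms $x_1^2 x_2$ and $x_1 x_2^2$ from the substitution $y_1 = (F_1 + x_2)/(1 - F_1 x_2)$. This $F_1$-induced asymmetry between $\tilde{g}^{(1)}$ and $g^{(2)}$ is what forces the strictly stronger conditions in (\ref{eq-c4-g}) and (\ref{eq-c5-g}) relative to the pseudo-planar theorem.

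The purely-rational and half-quadratic cases go through exactly as before via Proposition \ref{2g_i}: reducibility of $\tilde{g}^{(1)}$ is equivalent to reducibility of $g^{(1)}$, the extension diagrams degenerate in precisely the same way, and the classification of components into Case 1 or Case 2 in the sense of Definition \ref{casedefn} transfers directly.

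For the equimodular case I would invoke the general-type analog of Lemma \ref{samedsc}: $S$ is equimodular exactly when $K_1(x_2, x_3) = K_1(x_2)$ in every extension diagram, which is in turn equivalent to the discriminants of $\tilde{g}^{(1)}(\cdot, x_2)$ and $g^{(2)}(x_2, \cdot)$, viewed as quadratics over $K_2 = \cc(x_2)$, differing by a square in $K_2$. Writing both discriminants as quartics in $x_2$ whose coefficients are polynomial in $F_1$ and in the $a_i, b_i, c_i, e_i$ and imposing equality up to a square factor should cut out system (\ref{eq-c4-g}): the symmetry conditions $a_1 c_1 = b_1 e_1$ and $a_2 b_2 = c_2 e_2$ align the shapes of the two discriminants (forcing both to be ``palindromic'' quartics in $x_2$), the specialization $F_1 = \pm 1$ is then forced because otherwise the discriminant of $\tilde{g}^{(1)}$ still carries non-matching odd-degree terms in $x_2$, and the final proportionality in (\ref{eq-c4-g}) pins down the remaining constant factor.

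For the involutive-quadratic case I would follow the analog of Lemma \ref{samemini}: $S$ only admits components of Case 5 exactly when $\text{Res}(\tilde{g}^{(1)}, g^{(2)}; x_2)$ is a constant multiple of a perfect square that has no factor linear in $x_1$ or $x_3$ alone, while $S$ itself is not equimodular. Expanding the resultant and imposing the perfect-square condition forces the ratios $a_1/b_1 = c_1/e_1 = a_2/c_2 = b_2/e_2$ as in the pseudo-planar case, and the new $F_1$-cross terms of $\tilde{g}^{(1)}$ can be absorbed into a square only when this common ratio equals $-1$. The two subcases of (\ref{eq-c5-g}) then correspond to $F_1 \neq \pm 1$, where the resulting ideal is automatically disjoint from the equimodular locus, versus $F_1 = \pm 1$, where one must additionally impose the inequality $256 a_1 c_1 a_2 b_2 \neq 1$ to exclude that overlap. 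The quartic class is declared by complementation. The main obstacle is the explicit symbolic complexity: every coefficient of $\tilde{g}^{(1)}$ is a quadratic polynomial in $F_1$, so the relevant discriminants and resultants become large polynomials in the twelve parameters $\{F_1, a_i, b_i, c_i, e_i\}$, and cleanly isolating both the specializations $F_1 = \pm 1$ and the common ratio $-1$ as genuinely necessary conditions is a bookkeeping exercise best confirmed with the companion Maple script.
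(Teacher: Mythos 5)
Your plan follows the paper's own route almost exactly: the purely-rational and half-quadratic cases transfer verbatim from Theorem \ref{main-ps} because $[K_i(y_j):K_i]=[K_i(x_{j+1}):K_i]$, the equimodular case is reduced (Lemma \ref{samedsc-g}) to the discriminants $\Delta_1,\Delta_2$ of $\Tilde{g}^{(1)}(t,x_2)$ and $g^{(2)}(x_2,t)$ over $K_2$ differing by a square, the involutive-quadratic case is handled by forcing proportionality of the two quadratics in $y_2$ that both define the minimal polynomial of $x_2$ (equivalently, by the perfect-square factorization of the resultant recorded in Lemma \ref{samemini-g} and Theorem \ref{factor}), and the quartic class is taken as the complement. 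The identification of $F_1=\pm1$ with the vanishing of the odd-degree terms of $\Delta_1$, and of the ratio $-1$ with the linear terms of $h_0,h_2$ in (\ref{h012}), is also the paper's mechanism.

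There is, however, one step you gloss over that the paper cannot avoid and that your sketch as written would not deliver: in the necessity direction of the equimodular condition, ``$\Delta_1/\Delta_2$ is a square in $K_2$'' only reduces to ``$\Delta_1/\Delta_2$ is a nonzero constant'' (and hence to coefficient proportionality and the vanishing of odd-degree terms) after one shows that $\Delta_2$ is a squarefree quartic. This in turn requires proving $a_1b_1c_1e_1a_2b_2c_2e_2\neq 0$ under the standing hypotheses, which is where the inequalities of Definition \ref{dfnpoly} enter essentially; the paper spends roughly half of the proof of Lemma \ref{samedsc-g'} ruling out the degenerate sub-cases (e.g.\ $b_1e_1=0$ forcing $(a_1-b_1)(e_1-c_1)=\tfrac14$, which is excluded). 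Writing $f=u/v$ in lowest terms, $\Delta_1v^2=\Delta_2u^2$ forces $v^2\mid\Delta_2$, so without squarefreeness of $\Delta_2$ a non-constant $f$ is not excluded and the claimed necessity of system (\ref{eq-c4-g}) does not follow. Your outline should make this non-vanishing argument explicit rather than folding it into the final ``bookkeeping'' step, since it is a structural prerequisite for the computation, not a by-product of it.
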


The proof has been split into cases which will be detailed studied.

\subsection{Case-by-case study}

\subsubsection{Purely-rational}

The proof of Theorem \ref{main-g} is left to the reader. In fact, the proof for purely-rational class and half-quadratic class can be directly copied from their counterparts in pseudo-planar type, Section \ref{pseudo}. This is because in the corresponding extension diagram, $[K_i(y_j):K_i]=[K_i(x_{j+1}):K_i]$ for all $i,j$. Hence by Definition \ref{casedefn}, the component remains in the same \textbf{Case} no matter $F_1\in \{0, \infty\}$ or not.

\subsubsection{Half-quadratic}

The proof of Theorem \ref{main-g} was mentioned above. Nevertheless, given a couping $(\Tilde{g}^{(1)}, g^{(2)}, H^{(1)}, H^{(2)})$ which admits a component $W$ of \textbf{Case 2}, we know it is either $\Tilde{g}^{(1)}$ is reducible or $g^{(2)}$ is reducible (but not both). So when $\Tilde{g}^{(1)}$ is reducible, in the extension diagram of $W$ we have $x_2\in K_1$ and the minimal polynomial of $y_2$ on $K_1$ can be obtained from $g^{(2)}$ through substitution of $x_2=x_2(x_1)$; similarly, when $g^{(2)}$ is reducible we have $x_2\in K_3=\cc(y_2)$ and the minimal polynomial of $y_2$ on $K_1$ can be obtained from $\Tilde{g}^{(1)}$ through substitution $x_2=x_2(y_2)$.

\subsubsection{Equimodular}

As in pseudo-planar type, the problem can be reduced to identical discriminants up to a square in $K_2$. To start up, fix an extension diagram and change the base field to $K_2=\cc(x_2)=\cc(y_1)$ so $x_1, y_2$ are algebraic elements on $K_2$. From $g^{(1)}, \Tilde{g}^{(1)}$, and $g^{(2)}$ respectively, we have discriminants

\begin{equation}\label{d2}
    \left\{\begin{array}{l}
        \Delta_1'=-4 a_1 c_1 y_1^4 + (1 - 4 a_1 e_1- 4 b_1 c_1) y_1^2 - 4 b_1 e_1,\\
        \Delta_1=-4 a_1 c_1 (x_2+F_1)^4 + (1 - 4 a_1 e_1- 4 b_1 c_1) (x_2+F_1)^2(1-F_1x_2)^2 - 4 b_1 e_1 (1-F_1x_2)^4,\\  
        \Delta_2=-4 a_2 b_2 x_2^4 + (1 - 4 a_2 e_2- 4 b_2 c_2) x_2^2 - 4 c_2 e_2.\\
    \end{array}\right.
\end{equation}
Knowing that $H^{(1)}=0$, $\Delta_1$ is actually obtained from $\Delta_1'$ by substitution $y_1=\frac{x_2+F_1}{1-F_1x_2}$.

We skip the proof of the following lemma since it is a complete analog of Lemma \ref{samedsc} from pseudo-planar type, however, it applies to both types.

\begin{lemma}\label{samedsc-g}
    Given a non-singular coupling $S=(\Tilde{g}^{(1)}, g^{(2)}, H^{(1)}, H^{(2)})$ where $\Tilde{g}^{(1)}, g^{(2)}$ are irreducible. $S$ is equimodular if and only if in every extension diagram of its components, one of the following equivalent conditions holds:
    \begin{itemize}
        \item [(1)] $y_2 \in K_1(x_2)$; (2) $y_2 \in K_2(x_1)$; (3) $x_1 \in K_2(y_2)$; (4) $x_1 \in K_3(x_2)$.
    \end{itemize} 
    On the other hand, regarding $\Tilde{g}^{(1)}(t,x_2), g^{(2)}(x_2,t)$ as polynomials in $K_2[t]$ with discriminants $\Delta_1, \Delta_2$ respectively, $S$ is equimodular if and only if $\exists f\in K_2$ such that $f^2=\frac{\Delta_1}{\Delta_2}$.
\end{lemma}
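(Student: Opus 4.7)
My plan is to mirror the proof of Lemma \ref{samedsc}, adjusting only for the rational substitution $H^{(1)}$ that distinguishes the general setup from the pseudo-planar one. The key field identifications are
$$
K_1(x_2) = K_2(x_1) = \mathrm{Frac}(\cc[x_1,x_2]/(\Tilde{g}^{(1)})), \quad K_2(y_2) = K_3(x_2) = \mathrm{Frac}(\cc[x_2,y_2]/(g^{(2)})),
$$
each valid because $\Tilde{g}^{(1)}$ and $g^{(2)}$ are irreducible; combined with the rational bijection $H^{(j)}$, which gives $K_i(y_j) = K_i(x_{j+1})$, these immediately yield $(1)\Leftrightarrow(2)$ and $(3)\Leftrightarrow(4)$. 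For $(2)\Leftrightarrow(3)$, since $g^{(2)}$ is irreducible and quadratic in $y_2$ over $K_2$, we have $[K_2(y_2):K_2]=2$, and likewise $[K_2(x_1):K_2]=2$; if $y_2\in K_2(x_1)$ then $y_2\notin K_2$ forces $K_2(y_2)=K_2(x_1)$ (both of degree $2$), so $x_1\in K_2(y_2)$, and the converse is symmetric.

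Next I would translate equimodularity into condition $(1)$ via Definition \ref{casedefn}. Because $\Tilde{g}^{(1)}, g^{(2)}$ are irreducible, only components of \textbf{Case 3}, \textbf{Case 4}, \textbf{Case 5}, or \textbf{Case 6} can occur. \textbf{Case 3} gives $y_2\in K_1\subseteq K_1(x_2)$, and \textbf{Case 4} is exactly $K_1(x_2,y_2)=K_1(x_2)$, i.e. $y_2\in K_1(x_2)$; whereas \textbf{Case 5} (with $[K_1(x_2,y_2):K_1]=4$) and \textbf{Case 6} (with $[K_1(y_2):K_1]=4$) both force $y_2\notin K_1(x_2)$. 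Hence $S$ is equimodular iff every component satisfies $(1)$.

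For the discriminant characterization, I would invoke Gauss's lemma together with Proposition \ref{2g_i} (which says $\Tilde{g}^{(1)}$ has no factors in $\cc[x_1]$ or $\cc[x_2]$) to conclude that $\Tilde{g}^{(1)}(t,x_2)\in K_2[t]$ remains irreducible as a quadratic in $t$; likewise $g^{(2)}(x_2,t)\in K_2[t]$. Consequently $\Delta_1,\Delta_2$ are non-squares in $K_2$, and $K_2(x_1)=K_2(\sqrt{\Delta_1})$, $K_2(y_2)=K_2(\sqrt{\Delta_2})$. Condition $(2)$ then reads $K_2(\sqrt{\Delta_2})\subseteq K_2(\sqrt{\Delta_1})$, which (both being of degree $2$) forces $K_2(\sqrt{\Delta_1})=K_2(\sqrt{\Delta_2})$, equivalently $\Delta_1/\Delta_2$ is a square in $K_2$, i.e.\ $\exists f\in K_2$ with $f^2=\Delta_1/\Delta_2$.

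The delicate step I anticipate is the local-to-global passage: conditions $(1)$--$(4)$ are formulated on an extension diagram of a single component, which by Proposition \ref{mani} only describes local behaviour, while the discriminant condition is a statement purely in $K_2$, independent of any component. I would need to verify that either one of them simultaneously characterises \emph{all} components of $S$; this should follow because the $K_2$-criterion $\Delta_1/\Delta_2\in (K_2^{\times})^2$ depends only on the global data $\Tilde{g}^{(1)},g^{(2)}$, and once it holds every component automatically lies in \textbf{Case 3} or \textbf{Case 4}, ruling out \textbf{Case 5}/\textbf{Case 6} uniformly across the decomposition.
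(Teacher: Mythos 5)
Your proposal is correct and follows essentially the same route the paper intends: the paper explicitly skips this proof, declaring it ``a complete analog of Lemma \ref{samedsc},'' and your argument is exactly that analog carried out — the quotient-field identifications for $(1)\Leftrightarrow(2)$ and $(3)\Leftrightarrow(4)$, the degree-two argument for $(2)\Leftrightarrow(3)$, the case analysis from Definition \ref{casedefn}, and the identification $K_2(x_1)=K_2(\sqrt{\Delta_1})$, $K_2(y_2)=K_2(\sqrt{\Delta_2})$. Your closing observation that the square-class of $\Delta_1/\Delta_2$ in $K_2$ is component-independent, so the criterion holds for all components or none, is a correct and worthwhile clarification of a point the paper leaves implicit.
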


The proof of Theorem \ref{main-g} is a consequence of the following lemma.

\begin{lemma}\label{samedsc-g'}
    Given a non-singular coupling $S=(\Tilde{g}^{(1)}, g^{(2)}, H^{(1)}, H^{(2)})$ where $\Tilde{g}^{(1)}, g^{(2)}$ are irreducible and $F_1\notin \{0, \infty\}$. $S$ is equimodular if and only if system (\ref{eq-c4-g}) holds.
\end{lemma}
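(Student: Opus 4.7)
By Lemma~\ref{samedsc-g}, $S$ is equimodular iff $\Delta_1/\Delta_2 \in K_2^{*2}$ with $K_2 = \mathbb{C}(x_2)$. My plan is to exploit the $\mathbb{Z}/2$-symmetry: $\Delta_2 \in \mathbb{C}[x_2^2]$ is invariant under $\sigma\colon x_2 \mapsto -x_2$, but $\Delta_1$ generically is not. Decomposing $\Delta_1 = E(x_2^2) + x_2\,O(x_2^2)$ into its $\sigma$-even and $\sigma$-odd parts, and applying $\sigma$ to $\Delta_1 = f^2\,\Delta_2$ gives $\Delta_1\cdot \sigma\Delta_1 = (f\,\sigma f)^2\,\Delta_2^2$, so $E^2 - x_2^2\,O^2$ must be a perfect square in $\mathbb{C}[x_2]$.

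For \emph{sufficiency}, I would set $A = -4a_1c_1$, $B = 1-4a_1e_1-4b_1c_1$, $C = -4b_1e_1$. The first condition $a_1c_1 = b_1e_1$ reads $A = C$, and with $F_1 = \pm 1$, expanding $\Delta_1(x_2) = (1-F_1 x_2)^4\,\Delta_1'\!\bigl(\tfrac{x_2+F_1}{1-F_1 x_2}\bigr)$ collapses $\Delta_1$ into the palindromic form $P(x_2^4+1) + R x_2^2$ with $P = 2A+B$ and $R = 12A-2B$. Similarly $a_2b_2 = c_2e_2$ makes $\Delta_2 = M(x_2^4+1) + N x_2^2$ palindromic, where $M = -4a_2b_2$ and $N = 1-4a_2e_2-4b_2c_2$. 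Direct manipulation shows the fourth equation of (\ref{eq-c4-g}) rearranges to $PN = MR$, so $\Delta_1 = (P/M)\,\Delta_2$ is a constant multiple of $\Delta_2$, hence the ratio is a square.

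For \emph{necessity}, I would let $d = \gcd(\Delta_1, \sigma\Delta_1)$. Since the gcd is $\sigma$-invariant, $d \in \mathbb{C}[x_2^2]$; writing $\Delta_1 = d\,q$, the requirement that $\Delta_1\cdot \sigma\Delta_1 = d^2\,q\,\sigma q$ be a perfect square reduces to $q\,\sigma q$ being a square in $\mathbb{C}[x_2]$. A case analysis on $\deg q \le 2$, exploiting that $\Tilde{g}^{(1)}$ irreducible forbids $\Delta_1$ itself from being a square in $\mathbb{C}[x_2]$ (else the discriminant criterion would split the quadratic over $K_2$), forces $q \in \mathbb{C}[x_2^2]$, i.e., $O \equiv 0$. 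Using the explicit expansion
\begin{equation*}
O(x_2^2) = 2F_1\bigl[(2A-B) + (B-2C)F_1^2\bigr]\,x_2^2 + 2F_1\bigl[(2A-B)F_1^2 + (B-2C)\bigr]
\end{equation*}
and setting both coefficients to zero, while excluding the degenerate relation $2A = B = 2C$ (which would make $\Delta_1'$ a perfect square in $y_1$ and hence $\Delta_1$ a perfect square in $x_2$, contradicting irreducibility), I would conclude $F_1^2 = 1$ and $A = C$, yielding the first and third conditions of (\ref{eq-c4-g}).

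With $F_1 = \pm 1$ and $A = C$ in hand, substituting $t = x_2^2$ turns $\Delta_1, \Delta_2$ into quadratic polynomials $Pt^2 + Rt + P$ and $Mt^2 + Nt + L$ in $t$. Since an element of $\mathbb{C}(x_2^2) \subset \mathbb{C}(x_2)$ is a square in $\mathbb{C}(x_2)$ iff either it or its product with $x_2^2$ is a square in $\mathbb{C}(x_2^2)$, degree-counting in $t$ together with the irreducibility-forbidden perfect-square branches collapses $\Delta_1/\Delta_2$ to a nonzero constant, so $\Delta_1 = c\,\Delta_2$. Matching coefficients then produces $L = M$ (the second condition $a_2b_2 = c_2e_2$) and $PN = MR$ (the fourth condition). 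The hardest step will be the case analysis $q\,\sigma q \in \mathbb{C}[x_2]^{\cdot 2} \Rightarrow q \in \mathbb{C}[x_2^2]$: ruling out the delicate subcase where $q$ is itself a nontrivial perfect square in $\mathbb{C}[x_2]$ requires tracking how such a factorization would force $\Delta_1$ to be a perfect square, violating the irreducibility of $\Tilde{g}^{(1)}$.
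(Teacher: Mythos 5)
Your sufficiency computation and your final ``collapse $\Delta_1/\Delta_2$ to a constant'' step are sound, and the symmetry $\sigma\colon x_2\mapsto -x_2$ is a genuinely different organizing idea from the paper's proof, which instead first establishes $a_1b_1c_1e_1a_2b_2c_2e_2\neq 0$, deduces that $\Delta_2$ is squarefree of degree $4$, and concludes directly that $\Delta_1/\Delta_2$ is a constant. The gap is exactly at the step you flag as hardest, and your proposed escape route does not work. The implication ``$q\,\sigma q$ is a perfect square and $\Delta_1$ is not a perfect square $\Rightarrow q\in\cc[x_2^2]$'' is false for polynomials: take $\Delta_1=c\,(x_2+F_1)^2(x_2^2-w^2)$ with $w\neq 0,\pm F_1$. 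Then $d=\gcd(\Delta_1,\sigma\Delta_1)=c(x_2^2-w^2)\in\cc[x_2^2]$ is squarefree, $q=(x_2+F_1)^2$ is a nontrivial perfect square with $q\,\sigma q=(x_2^2-F_1^2)^2$ a square, yet $q\notin\cc[x_2^2]$ and $\Delta_1=dq$ is \emph{not} a perfect square, so irreducibility of $\Tilde{g}^{(1)}$ gives no contradiction. This shape of $\Delta_1$ is precisely what occurs when $b_1e_1=0$ or $a_1c_1=0$ (a double root of $\Delta_1'$ at $y_1=0$ or $y_1=\infty$, sent by the M\"obius substitution to $x_2=-F_1$ or $x_2=1/F_1$), and the remaining pair of roots of $\Delta_1$ is symmetric about the origin exactly when $\Delta_1'(\pm i)=0$, i.e.\ when $1-4(a_1-b_1)(e_1-c_1)=0$. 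The only way to exclude it is the strict inequality $(a_1-b_1)(e_1-c_1)<\tfrac14$ of Definition~\ref{dfnpoly}, which your proof never invokes; the paper's proof devotes its entire opening paragraph to this degenerate case and kills it with that very identity. Once $a_1b_1c_1e_1\neq 0$ is secured, $\Delta_1$ is squarefree and your argument closes cleanly: $\Delta_1\,\sigma\Delta_1$ a square then forces $\Delta_1\mid\sigma\Delta_1$, hence $\sigma\Delta_1=\pm\Delta_1$, and the sign $-$ is ruled out because it makes $v_{x_2}(\Delta_1)$ odd while $v_{x_2}(\Delta_2)$ is even.

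Two smaller repairs. First, $\gcd(\Delta_1,\sigma\Delta_1)$ is $\sigma$-invariant only up to a unit, so $\sigma d=-d$ (i.e.\ $d\in x_2\cc[x_2^2]$) must also be dispatched, again via the parity of $v_{x_2}$. Second, in the sufficiency direction you divide by $M=-4a_2b_2$; note that $M=L=0$ contradicts the non-singularity and irreducibility of $g^{(2)}$, and $M=0$, $L\neq 0$ is incompatible with the fourth condition, so the division is legitimate. With those patches, your necessity route and the paper's converge on the same endgame (no odd-degree terms in $\Delta_1$, then proportional even coefficients), and your explicit formula for $O$ and the linear-system argument giving $F_1^2=1$ and $A=C$ are correct.
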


When $\Tilde{g}^{(1)}$ is irreducible, it determines the minimal polynomial of $x_2$ on $K_1$ in every component. It is convenient to use the expression
$$
\Tilde{g}^{(1)}=h_2(x_1) x_2^2 + h_1(x_1) x_2 + h_0(x_1)
$$
where
\begin{equation}\label{h012}
    \left\{\begin{array}{l}
    h_2(x_1)=(b_1 F_1^2 +a_1) x_1^2 - F_1 x_1 + e_1 F_1^2 +c_1, \\
    h_1(x_1)=2 F_1 (a_1 - b_1) x_1^2 + (1 - F_1^2) x_1 + 2 F_1 (c_1 - e_1), \\
    h_0(x_1)=(a_1 F_1^2 +b_1) x_1^2 + F_1 x_1 + c_1 F_1^2 +e_1. \\
\end{array}\right.
\end{equation}
Now let us suppose the component is of \textbf{Case 3} with the extension diagram in which $y_2\in K_1$, we have

\begin{lemma}\label{n-is-g}
    If a non-singular coupling $(\Tilde{g}^{(1)}, g^{(2)}, H^{(1)}, H^{(2)})$ admits a component of \textbf{Case 1} or \textbf{Case 3}, then the relation between $x_1$ and $y_2$ in the extension diagram must be in the form $y_2=\frac{px_1+q}{rx_1+s}, ps-qr\neq 0$.
\end{lemma}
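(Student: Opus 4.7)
I treat Cases 1 and 3 from the hypothesis separately. In both, Definition~\ref{casedefn} forces $y_2\in K_1=\cc(x_1)$, so $y_2$ is already a rational function of $x_1$; the task is to upgrade ``rational'' to ``M\"obius'' with $ps-qr\neq 0$.

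\emph{Case 1 (purely-rational).} Both $\Tilde{g}^{(1)}$ and $g^{(2)}$ are reducible, and Lemma~\ref{rational} writes every irreducible factor of $g^{(1)},g^{(2)}$ as either $kx-y$ or $xy-k$; in particular, every such factor is bilinear. Substituting a factor of $g^{(1)}$ into $H^{(1)}=0$ is a one-step elimination of $y_1$ and yields an irreducible factor $\Tilde{f}^{(1)}$ of $\Tilde{g}^{(1)}$ bilinear in $(x_1,x_2)$. Since the chosen factor $f^{(2)}$ of $g^{(2)}$ is linear in $x_2$, the resultant $r=\text{Res}(\Tilde{f}^{(1)},f^{(2)};x_2)$ is again a one-step substitution, hence bilinear in $(x_1,y_2)$. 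This directly exhibits $y_2$ as M\"obius in $x_1$, and $ps-qr\neq 0$ follows from the irreducibility of $r$ granted by Corollary~\ref{r_ij}.

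\emph{Case 3 (involutive-rational).} Now $\Tilde{g}^{(1)},g^{(2)}$ are irreducible. Since $\Tilde{g}^{(1)}$ has no factor in $\cc[x_1]$ (Proposition~\ref{2g_i}), Gauss's lemma makes $\Tilde{g}^{(1)}(x_1,t)\in K_1[t]$ the minimal polynomial of $x_2$ over $K_1$. As $y_2\in K_1$ but $x_2\notin K_1$, the quadratic $g^{(2)}(t,y_2)\in K_1[t]$ must be proportional to $\Tilde{g}^{(1)}(x_1,t)$, yielding
\[
\frac{a_2y_2^2+b_2}{h_2(x_1)}=\frac{y_2}{h_1(x_1)}=\frac{c_2y_2^2+e_2}{h_0(x_1)}.
\]
Cross-multiplying these ratios pairwise produces both
\[
y_2=\frac{(b_2c_2-a_2e_2)\,h_1}{c_2h_2-a_2h_0}\qquad\text{and}\qquad y_2^2=\frac{b_2h_0-e_2h_2}{c_2h_2-a_2h_0},
\]
whose consistency is the polynomial identity
\[
(c_2h_2-a_2h_0)(e_2h_2-b_2h_0)=-(a_2e_2-b_2c_2)^2\,h_1^2.
\]

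\emph{Main obstacle.} The crux is converting this identity into the M\"obius conclusion. Set $P=c_2h_2-a_2h_0$, $Q=e_2h_2-b_2h_0$, and $D=\gcd(P,Q)$ in the UFD $\cc[x_1]$; the identity makes $PQ$ a perfect square, so $P/D$ and $Q/D$ must be coprime perfect squares $u^2,v^2$ with $Duv$ proportional to $h_1$. Then $y_2$ simplifies to a constant multiple of $v/u$ with $\gcd(u,v)=1$, and an elementary degree count from $\deg P,\deg Q,\deg h_1\leq 2$ forces $\deg u,\deg v\leq 1$, giving the M\"obius form. The one remaining danger is the degenerate sub-case $\deg u=\deg v=0$, in which $y_2$ would be constant; this would pin $x_2$ to the finite root set of $g^{(2)}(t,y_2)$ and then $x_1$ to the finite root set of $\Tilde{g}^{(1)}(\cdot,x_2)$, contradicting the one-dimensionality of the component $W$ granted by Proposition~\ref{mani}. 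Coprimality of $u,v$ then delivers $ps-qr\neq 0$.
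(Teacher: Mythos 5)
Your Case 1 argument is fine and matches the paper in spirit (the paper simply defers to the M\"obius-composition argument of Theorem \ref{PR}); composing the bilinear factor of $g^{(1)}$, the bilinear $H^{(1)}$, and the bilinear factor of $g^{(2)}$ does give a M\"obius relation, with $ps-qr\neq 0$ from irreducibility of the resultant (Corollary \ref{r_ij}). Your Case 3 argument also starts exactly where the paper starts: uniqueness of the minimal polynomial of $x_2$ over $K_1$ forces the proportionality chain $\frac{a_2y_2^2+b_2}{h_2}=\frac{y_2}{h_1}=\frac{c_2y_2^2+e_2}{h_0}$. But the next step has a genuine gap. Your two displayed formulas for $y_2$ and $y_2^2$ implicitly divide by $c_2h_2-a_2h_0$, and this polynomial can vanish \emph{identically} for couplings that do admit Case 3 components. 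Concretely, take $F_1=F_2=0$, $a_1=b_1=1$, $c_1=e_1=\tfrac{1}{32}$, $a_2=c_2=1$, $b_2=e_2=\tfrac{1}{32}$: the inequalities of Definition \ref{dfnpoly} hold, both $g^{(i)}$ are non-singular and irreducible, and by Lemma \ref{n-is} both $(x_3-x_1)$ and $(x_1x_3-\tfrac1{32})$ divide the resultant, so Case 3 components exist. Yet $h_2=x_1^2+\tfrac1{32}=h_0$ and $c_2h_2-a_2h_0\equiv 0$, $b_2c_2-a_2e_2=0$, $b_2h_0-e_2h_2\equiv 0$. Both of your formulas read $0/0$, your consistency identity degenerates to $0=0$, and the entire $P,Q,D,u,v$ analysis collapses because $P=Q=0$. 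This degeneration is not exotic: it occurs exactly when $\frac{a_1}{b_1}=\frac{c_1}{e_1}=\frac{a_2}{c_2}=\frac{b_2}{e_2}$ (the orthodiagonal relation of system (\ref{eq-c5}) without its inequality), and when additionally $a_1c_1=a_2b_2$ the coupling genuinely falls under the hypothesis of the lemma. In that regime the chain still constrains $y_2$ through a single quadratic such as $a_2x_1y_2^2-(a_1x_1^2+c_1)y_2+b_2x_1=0$, which happens to split into two M\"obius relations, but your proof never establishes this.

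The paper's proof avoids the issue by never isolating $y_2$: it writes $y_2=f/g$ in lowest terms, substitutes into the chain to get $\frac{h_2}{a_2f^2+b_2g^2}=\frac{h_1}{fg}=\frac{h_0}{c_2f^2+e_2g^2}$, and observes that $\gcd(h_2,h_1,h_0)=1$ (Proposition \ref{2g_i}) and $\gcd(a_2f^2+b_2g^2,\,fg,\,c_2f^2+e_2g^2)=1$ (at most one of $a_2,b_2,c_2,e_2$ vanishes, by Corollary \ref{redu=iso}). Hence the common ratio is a nonzero constant, so $\deg f,\deg g\leq 1$ directly from $\deg h_i\leq 2$, and non-constancy of $y_2$ gives $ps-qr\neq 0$. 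To repair your argument you would either need to adopt this gcd route, or add a separate analysis of the sub-case $c_2h_2-a_2h_0\equiv 0$ showing that the surviving quadratic relation in $y_2$ over $K_1$ is reducible into M\"obius factors there; as written, the proof does not cover all couplings admitting a Case 3 component.
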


\begin{definition}\label{realcom}
    For a component $W$ of a coupling, we say $W$ is \textbf{real} if $W\cap (\rp)^5$ is an infinite set.
\end{definition}

A surprising fact about equimodular couplings of general type is that they have no real components.

\begin{proposition}\label{real-sol}
    An equimodular coupling $S=(\Tilde{g}^{(1)}, g^{(2)}, H^{(1)}, H^{(2)})$ admits a real component only if $F_1\in \{0, \infty\}$ and
    \begin{equation}\label{+ratio}
    \left\{\begin{array}{l}
        \frac{a_1 c_1}{a_2 b_2}=\frac{1- 4 a_1 e_1- 4 b_1 c_1}{1 - 4 a_2 e_2- 4 b_2 c_2}=\frac{b_1 e_1}{c_2 e_2}>0 \text{ if } F_1=0,\\
        \frac{b_1 e_1}{a_2 b_2}=\frac{1- 4 a_1 e_1- 4 b_1 c_1}{1 - 4 a_2 e_2- 4 b_2 c_2}=\frac{a_1 c_1}{c_2 e_2}>0 \text{ if } F_1=\infty.\\         
        \end{array}\right.
    \end{equation}
\end{proposition}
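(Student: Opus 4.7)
The plan is to argue by contradiction for the case $F_1 \notin \{0, \infty\}$, and then characterize the $F_1 \in \{0, \infty\}$ case by positivity. Throughout, equimodularity forces both $\Tilde{g}^{(1)}$ and $g^{(2)}$ to be irreducible, by Definition \ref{dfn-ps}.

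Suppose $F_1 \notin \{0, \infty\}$. By Lemma \ref{samedsc-g'}, equimodularity gives $F_1 = \pm 1$ and system (\ref{eq-c4-g}), so in particular $a_1 c_1 = b_1 e_1 =: T_1$ and $a_2 b_2 = c_2 e_2 =: T_2$. Set $\alpha := 1 - 4a_1 e_1 - 4b_1 c_1 - 8 T_1$ and $\beta := 1 - 4a_2 e_2 - 4b_2 c_2 - 8 T_2$. A direct expansion of the discriminants in (\ref{d2}) under these hypotheses yields
\[
\Delta_1 = \alpha(x_2^4 + 1) - (2\alpha + 64 T_1)\, x_2^2, \qquad \Delta_2 = -4 T_2(x_2^4 + 1) + (\beta + 8 T_2)\, x_2^2,
\]
and the last identity of (\ref{eq-c4-g}) rewrites as $\alpha\beta = 256\, T_1 T_2$, which is precisely the condition making $\Delta_1 = c^2 \Delta_2$ as polynomials, with $c^2 = -\alpha/(4 T_2)$.

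The crux is extracting tight sign information from the inequalities of Definition \ref{dfnpoly}. Using $a_1 c_1 = b_1 e_1$, the inequality $(a_1 - b_1)(e_1 - c_1) < 1/4$ collapses to $\alpha + 16 T_1 > 0$, while $(1 - 4 a_1 e_1 - 4 b_1 c_1)^2 > 64\, a_1 b_1 c_1 e_1$ becomes $\alpha(\alpha + 16 T_1) > 0$, forcing $\alpha > 0$; symmetrically $\beta > 0$ and $\beta + 16 T_2 > 0$. Note $T_i \neq 0$, since $T_i = 0$ would give $\alpha \beta = 0$, contradicting $\alpha, \beta > 0$. A real component requires $\Delta_1, \Delta_2 \geq 0$ on a real interval, hence $c^2 > 0$, which together with $\alpha > 0$ forces $T_2 < 0$, and via $\alpha \beta = 256\, T_1 T_2 > 0$ also $T_1 < 0$. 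But then the strict bounds $\alpha > 16 |T_1|$ and $\beta > 16 |T_2|$ multiply to $\alpha \beta > 256\, T_1 T_2$, contradicting the equimodularity identity. Therefore $F_1 \in \{0, \infty\}$.

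The remaining claim is shorter. Once $F_1 = 0$, Lemma \ref{samedsc} identifies equimodularity with the proportionality chain in the first subsystem of (\ref{eq-c3or4}), and the common ratio is exactly the constant value of $\Delta_1/\Delta_2 \in K_2$; reality of the component forces this ratio to be a positive real square — otherwise $\Delta_1$ and $\Delta_2$ have opposite signs on the reals — which is precisely the first line of (\ref{+ratio}), while the degenerate subsystems combine with Definition \ref{dfnpoly} to admit no real component at all. The case $F_1 = \infty$ is handled by the coefficient swap described at the start of Section \ref{pseudo}, yielding the second line. I expect the main obstacle to be the bookkeeping in the sign-information step: one must verify carefully, for both signs of the parameter $k = a_1/e_1 = b_1/c_1$, that the three Definition \ref{dfnpoly} inequalities really do telescope into the two strict inequalities $\alpha > 0$ and $\alpha + 16 T_1 > 0$, since the terminal contradiction depends crucially on strictness.
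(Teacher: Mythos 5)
Your exclusion of the general type ($F_1 \notin \{0,\infty\}$) is correct and I checked the computations: the closed forms for $\Delta_1,\Delta_2$ under system (\ref{eq-c4-g}), the identity $\alpha\beta = 256\,T_1T_2$, and the deductions $\alpha>0$, $\alpha+16T_1>0$ (symmetrically for $\beta$) from the inequalities of Definition \ref{dfnpoly} all hold, and the terminal contradiction $\alpha\beta > 256|T_1||T_2| = 256\,T_1T_2$ is clean. This is essentially the paper's route (Lemma \ref{samedsc-g'} plus a sign analysis of the constant $\Delta_1/\Delta_2$), repackaged: the paper shows unconditionally that $T_1,T_2>0$ under (\ref{eq-c4-g}) — via a WLOG step $\alpha\le|16T_1|$ — so that $\Delta_1/\Delta_2=-\alpha/(4T_2)<0$ outright, whereas you assume a real component, deduce $T_1,T_2<0$, and contradict the equimodularity identity; the two arguments use identical ingredients. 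The sign-telescoping you flagged as the likely obstacle is in fact fine.

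The genuine gap is in the second half. For $F_1=0$, Lemma \ref{samedsc} offers three alternative subsystems in (\ref{eq-c3or4}), and the conclusion (\ref{+ratio}) is the positive form of only the first. You dismiss the two degenerate subsystems ($b_1e_1=a_2b_2=0$ or $a_1c_1=c_2e_2=0$) in a single clause, but their exclusion requires a real argument: in the case $b_1e_1=a_2b_2=0$, one has $\Delta_1=x_2^2\left(-4a_1c_1x_2^2+(1-4a_1e_1-4b_1c_1)\right)$ and $\Delta_2=(1-4a_2e_2-4b_2c_2)x_2^2-4c_2e_2$, and one must show that the constant $\frac{-4a_1c_1}{1-4a_2e_2-4b_2c_2}$ is necessarily nonpositive — which again takes a WLOG between the two chains of the degenerate identity plus the inequalities of Definition \ref{dfnpoly} to force $a_1c_1>0$ and $1-4a_2e_2-4b_2c_2>0$. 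The paper devotes a full paragraph to exactly this step. Since these subsystems are genuinely attainable by equimodular couplings with $F_1=0$ and are not instances of the positive chain in (\ref{+ratio}), ruling them out is a necessary piece of the proof rather than bookkeeping; as written, your argument asserts the conclusion of that piece without supplying it.
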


The above proposition tells us, in reality, any two spherical quads as coupled in Fig.~\ref{squads} left can never determine an equimodular coupling of general type.

\subsubsection{Involutive-quadratic}

The proof of Theorem \ref{main-g} is a consequence of a stronger version of Lemma \ref{samemini}.

\begin{lemma}\label{samemini-g}
    Given a non-singular coupling $S=(\Tilde{g}^{(1)}, g^{(2)}, H^{(1)}, H^{(2)})$ where $F_1\notin \{0,\infty \}$, then $S$ only admits components of \textbf{Case 5} if and only if one of systems (\ref{eq-c5-g}) holds. Moreover, the factorization of the resultant is
    $$
    \text{Res}(\Tilde{g}^{(1)},g^{(2)};x_2)=-(a_2 h_1(x_1) y_2^2 - h_2(x_1) y_2 + b_2 h_1(x_1))^2
    $$
    where $h_i(x_1)$ are given in equation (\ref{h012}).
\end{lemma}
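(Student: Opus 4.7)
The plan is to mirror the structure of Lemma \ref{samemini} but adapted to general $F_1$, so that the explicit factorization of the resultant becomes the backbone of the argument. First I would dispose of the easy cases: by Proposition \ref{2g_i}, if $S$ admits only components of \textbf{Case 5}, both $\Tilde{g}^{(1)}$ and $g^{(2)}$ must be irreducible (otherwise \textbf{Case 1} or \textbf{Case 2} components appear), and by Lemma \ref{samedsc-g} the coupling fails to be equimodular, so $y_2 \notin K_1(x_2)$ in each extension diagram. Under these assumptions, each component is $Z(\Tilde{g}^{(1)}, g^{(2)}, r, H^{(1)}, H^{(2)})$ where $r$ is an irreducible factor of $R_1(x_1,y_2) = \mathrm{Res}(\Tilde{g}^{(1)}, g^{(2)}; x_2)$; the \textbf{Case 5} constraint forces the minimal polynomial of $y_2$ over $K_1$ to be quadratic in $y_2$, so $r$ is a bidegree-$(2,2)$ irreducible polynomial, and by symmetry of the argument from the $K_3$-side it is also quadratic in $x_1$.

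The key computation is the sufficient direction. Writing $\Tilde{g}^{(1)} = h_2 x_2^2 + h_1 x_2 + h_0$ with $h_i = h_i(x_1)$ as in (\ref{h012}), and $g^{(2)} = u_2 x_2^2 + u_1 x_2 + u_0$ with $u_2 = a_2 y_2^2 + b_2$, $u_1 = y_2$, $u_0 = c_2 y_2^2 + e_2$, the standard formula gives
$$R_1 = (h_2 u_0 - h_0 u_2)^2 - (h_2 u_1 - h_1 u_2)(h_1 u_0 - h_0 u_1).$$
Under $\tfrac{a_1}{b_1}=\tfrac{c_1}{e_1}=\tfrac{a_2}{c_2}=\tfrac{b_2}{e_2}=-1$, a direct substitution shows $h_0 = -h_2$ and $u_0 = -u_2$, hence $h_2 u_0 - h_0 u_2 = 0$ and $h_1 u_0 - h_0 u_1 = -(h_1 u_2 - h_2 u_1)$, collapsing $R_1$ into
$$R_1 = -(h_2 u_1 - h_1 u_2)^2 = -(a_2 h_1 y_2^2 - h_2 y_2 + b_2 h_1)^2,$$
which is exactly the claimed factorization.

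It remains to verify that the quadratic factor $\varphi := a_2 h_1 y_2^2 - h_2 y_2 + b_2 h_1$ is irreducible in $\cc[x_1, y_2]$, since only then do we genuinely get \textbf{Case 5} components (rather than degenerating into \textbf{Case 3}/\textbf{Case 4}). This is equivalent to its $y_2$-discriminant $h_2^2 - 4 a_2 b_2 h_1^2$ not being a square in $\cc(x_1)$. If $F_1 \neq \pm 1$, one exploits that $h_2$ has a nonzero constant term relative to $h_1$'s vanishing at the $x_1$-roots of $h_1$ (equivalently, $h_1, h_2$ are $\cc$-linearly independent with incompatible root structure), so the discriminant is not a perfect square. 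If $F_1 = \pm 1$, then $h_2 = -F_1 x_1$ and $h_1 = 4 F_1(a_1 x_1^2 + c_1)$, so the discriminant reduces (up to $F_1^2$) to
$$x_1^2 - 64 a_2 b_2 (a_1 x_1^2 + c_1)^2,$$
a polynomial quadratic in $x_1^2$ whose own discriminant vanishes iff $256 a_1 c_1 a_2 b_2 = 1$. Excluding this equality gives irreducibility, matching the extra condition in the second bullet of (\ref{eq-c5-g}).

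The main obstacle is the necessary direction: showing that admitting only \textbf{Case 5} components forces one of the systems (\ref{eq-c5-g}). My strategy would be to exploit that $R_1$, a polynomial of bidegree at most $(4,4)$, must factor as either a single irreducible bidegree-$(2,2)$ polynomial squared or as a product of two such irreducible polynomials, and then compare coefficients of $R_1$ against the template of a polynomial of this form. Because Case 5 also demands $y_2 \notin K_1(x_2)$ (ruling out equimodularity by Lemma \ref{samedsc-g}), the discriminant $\Delta_1/\Delta_2 \in K_2$ from (\ref{d2}) cannot be a square, which supplies extra algebraic constraints that, combined with the square-factorization requirement, collapse to the proportions $b_1 = -a_1$, $e_1 = -c_1$, $c_2 = -a_2$, $e_2 = -b_2$. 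The coefficient matching is heavy and would be verified via the accompanying Maple script; the $F_1 = \pm 1$ subcase requires the additional genericity condition to distinguish true \textbf{Case 5} from a degeneration into equimodular or reducible behavior.
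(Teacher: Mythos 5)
Your computation of the factorization $R_1=-(a_2h_1y_2^2-h_2y_2+b_2h_1)^2$ is correct and is essentially the paper's "simple calculation": the substitutions force $h_0=-h_2$ and $u_0=-u_2$, which kills the $(h_2u_0-h_0u_2)^2$ term and collapses the resultant. But there are two genuine gaps. First, in the sufficiency direction you treat irreducibility of $\varphi:=a_2h_1y_2^2-h_2y_2+b_2h_1$ as what separates \textbf{Case 5} from \textbf{Case 3}/\textbf{Case 4}. Irreducibility only rules out \textbf{Case 3}: \textbf{Case 4} components also come from \emph{irreducible} bidegree-$(2,2)$ factors of $R_1$ (that is exactly the purely-quadratic situation), and the distinction between \textbf{Case 4} and \textbf{Case 5} is whether $K_1(x_2)=K_1(y_2)$, i.e.\ equimodularity — something your argument never addresses. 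The paper closes both issues at once by noting that each system of (\ref{eq-c5-g}) is incompatible with system (\ref{eq-c4-g}), so by Lemma \ref{samedsc-g'} the coupling is not equimodular; this excludes \textbf{Case 3} and \textbf{Case 4} simultaneously and makes the direct discriminant analysis of $\varphi$ (whose $F_1\neq\pm1$ branch you only gesture at) unnecessary.

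Second, the necessity direction is not actually proved: "compare coefficients against the template, verified via Maple" is a plan, not an argument, and the template you propose ($R_1$ a product of two irreducible bidegree-$(2,2)$ factors) is equally satisfied by purely-quadratic and half-quadratic couplings, so it cannot by itself collapse to the $-1$ ratios. The missing idea is the paper's: in \textbf{Case 5} one has $x_2\notin K_1(y_2)$ with $[K_1(y_2):K_1]=2$, so $\Tilde{g}^{(1)}$ and $g^{(2)}$, viewed as quadratics in $x_2$ over $K_1(y_2)$, are both proportional to the unique minimal polynomial of $x_2$ there, forcing $\frac{h_2}{a_2y_2^2+b_2}=\frac{h_1}{y_2}=\frac{h_0}{c_2y_2^2+e_2}$. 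Cross-multiplying produces two quadratic annihilators of $y_2$ over $K_1$, which must in turn be proportional, giving $\frac{a_2}{c_2}=\frac{h_2}{h_0}=\frac{b_2}{e_2}\in\cc$; the linear terms $\mp F_1x_1$ of $h_2,h_0$ (with $F_1\neq0$) pin this constant to $-1$, and the remaining coefficients of $h_2,h_0$ yield $a_1=-b_1$ and $c_1=-e_1$. The extra condition $256a_1c_1a_2b_2\neq1$ when $F_1=\pm1$ then falls out of excluding (\ref{eq-c4-g}), consistent with your (correct) observation that the discriminant of $x_1^2-64a_2b_2(a_1x_1^2+c_1)^2$ as a quadratic in $x_1^2$ vanishes precisely when $256a_1c_1a_2b_2=1$.
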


\begin{remark}
    The conditions in Lemma \ref{samemini-g} impose restrictions on the shape of the quads, i.e.  
    $$
    \frac{a_1}{b_1}=\frac{c_1}{e_1}=\frac{a_2}{c_2}=\frac{b_2}{e_2}=-1 \Leftrightarrow \delta_1=\mu_1=\gamma_2=\mu_2=\frac{\pi}{2}.
    $$
\end{remark}

\subsubsection{Quartic}

Considering $\Tilde{g}^{(1)},g^{(2)}$ are irreducible, the proof of Theorem \ref{main-g} is a consequence of Lemma \ref{samedsc-g'} and \ref{samemini-g}.

\section{Resultant analysis}\label{Res}

\subsection{Section outline and factorization theorem}

For a given coupling, the following theorem describes its class in a different way. This new perspective is very important for the construction of matchings (i.e. flexible meshes) using Theorem \ref{flexequi}.

\begin{theorem}\label{factor}
    Depending on the class of a non-singular coupling $S=(\Tilde{g}^{(1)}, g^{(2)}, H^{(1)}, H^{(2)})$, the factorization of $R_1:=\text{Res}(\Tilde{g}^{(1)},g^{(2)}; x_2)$ is as follows:
    \begin{itemize}
        \item [$\bullet$] \textbf{purely-rational}: $R_1(x_1,y_2)=\prod_{i=1}^4(r_i x_1 y_2 - p_i x_1 + s_i y_2 - q_i)$;
        \item [$\bullet$] \textbf{half-quadratic}: $R_1(x_1,y_2)=r(x_1,y_2)\cdot r'(x_1,y_2)$ where $r, r'$ are irreducible and quadratic in both $x_1$ and $y_2$;     
        \item [$\bullet$] \textbf{involutive-rational}: $R_1(x_1,y_2)=\prod_{i=1}^2(r_i x_1 y_2 - p_i x_1 + s_i y_2 - q_i)^2$;
        \item [$\bullet$] \textbf{rational-quadratic}: $R_1(x_1,y_2)=(r_1 x_1 y_2 - p_1 x_1 + s_1 y_2 - q_1)^2\cdot r(x_1,y_2)$ where $r$ is irreducible and quadratic in both $x_1$ and $y_2$;
        \item [$\bullet$] \textbf{purely-quadratic}: $R_1(x_1,y_2)=r(x_1,y_2)\cdot r'(x_1,y_2)$ where $r, r'$ are irreducible and quadratic in both $x_1$ and $y_2$;
        \item [$\bullet$] \textbf{involutive-quadratic}: $R_1(x_1,y_2)=\frac{b_1}{a_1}(a_2 h_1(x_1) y_2^2 - h_2(x_1) y_2 + b_2 h_1(x_1))^2$ where $h_i(x_1)$ are the coefficients of $x_2^i$ in $\Tilde{g}^{(1)}$ for $i=0,1,2$ respectively;
        \item [$\bullet$] \textbf{quartic}: $R_1(x_1,y_2)$ is irreducible.
    \end{itemize}
\end{theorem}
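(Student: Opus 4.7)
The plan is to read the factorization of $R_1$ off two sources already assembled in the paper: the description of the components of $Z(S)$ given by Proposition \ref{eqcoup}, and the extension-degree information encoded in each component's diagram (Definition \ref{casedefn}). The connecting observation is that $R_1$ is the determinant of a $4\times 4$ Sylvester matrix whose entries have degree at most $2$ in $x_1$ and at most $2$ in $y_2$, so $R_1 \in \cc[x_1, y_2]$ has bi-degree at most $(4,4)$. Because $y_2$ and $x_3$ are related by the Möbius transformation $H^{(2)}$, every extension-degree statement involving $x_3$ translates directly to $y_2$, so the minimal polynomial of $y_2$ over $K_1$ on a component can be read from the diagram.

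First I handle the two reducible classes. In the \textbf{purely-rational} case, both $\Tilde{g}^{(1)}, g^{(2)}$ factor into bi-affine pieces by Lemma \ref{rational} together with Proposition \ref{2g_i}, yielding four components $W_{jk}$ by Proposition \ref{eqcoup}; each $r_{jk} = \text{Res}(\Tilde f^{(1)}_j, f^{(2)}_k; x_2)$ is bi-affine and irreducible by Corollary \ref{r_ij}, and the four together already saturate bi-degree $(4,4)$, so they are the complete list of factors. In the \textbf{half-quadratic} case exactly one of $\Tilde g^{(1)}, g^{(2)}$ is reducible; eliminating $x_2$ from each linear factor against the irreducible quadratic factor of the other yields, after clearing denominators from the Möbius substitution, an irreducible bi-quadratic polynomial (irreducibility by Corollary \ref{r_ij}), and the product of the two pieces is forced to be $R_1$ by bi-degree.

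Now suppose both $\Tilde{g}^{(1)}, g^{(2)}$ are irreducible. By Proposition \ref{eqcoup} (equation (\ref{irr})), every component has the form $W_k = Z(g^{(1)}, g^{(2)}, r_k, H^{(1)}, H^{(2)})$ with $r_k \mid R_1$ irreducible; moreover $r_k$ is, up to a constant and clearing of denominators, the minimal polynomial of $y_2$ over $K_1$ on that component, so its bi-degree is controlled by $[K_1(y_2):K_1]$: bi-affine in \textbf{Case 3}, bi-quadratic in \textbf{Case 4}, and bi-quartic in \textbf{Case 6}. The \textbf{quartic} case is immediate: if $R_1$ factored, some factor would witness $[K_1(y_2):K_1] < 4$. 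For \textbf{involutive-rational} (two Case 3 components, by Corollary \ref{n-is-c}), the defining feature is that the two $K_1$-conjugate roots $x_2, x_2'$ of $\Tilde g^{(1)}(x_1, \cdot)$ lie in the same field $K_1(y_1) = K_1(x_2)$ and, because $[K_1(y_2):K_1] = 1$, both produce the same value $y_2 = f(x_1)$; hence the Sylvester matrix drops rank by $2$ along $y_2 = f(x_1)$, and the bi-affine factor carrying this component appears with multiplicity at least $2$. The bi-degree bound $(4,4)$ then forces multiplicity exactly $2$ for each of the two components. The \textbf{rational-quadratic} (one Case 3 plus one Case 4 component) and \textbf{purely-quadratic} (two Case 4 components) factorizations follow by the same degree accounting.

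The \textbf{involutive-quadratic} case is supplied directly by Lemma \ref{samemini} in pseudo-planar type and by Lemma \ref{samemini-g} in general type, both of which exhibit the explicit squared bi-quadratic factor. The main obstacle to spell out carefully is the multiplicity step in the third paragraph: one must verify that the "at least $2$" multiplicity coming from the rank drop of the Sylvester matrix is actually exactly $2$, which is where the global bi-degree upper bound $(4,4)$ is essential. Once this is in place, every remaining case is pinned down by pure degree counting against the contributions of the individual components.
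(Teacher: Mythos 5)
Your overall strategy is legitimate and in places genuinely more conceptual than the paper's: the paper establishes the shape of each factorization from table (\ref{table}), Corollary \ref{samedg} and Proposition \ref{4dg}, and then \emph{verifies} each case by explicit symbolic factorization after reparametrization, whereas you replace the computations by structural arguments --- multiplicativity of the resultant in the two reducible classes, and a corank-$2$ argument for the Sylvester matrix that recovers the multiplicity statement of Lemma \ref{n-is} without computing anything. That corank argument is sound: on a \textbf{Case 3} component $g^{(2)}(\cdot,f(x_1))$ and $\Tilde{g}^{(1)}(x_1,\cdot)$ are proportional in $K_1[x_2]$, so the Sylvester matrix has rank $2$ along $y_2=f(x_1)$ and the determinant vanishes to order at least $2$ there.

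The genuine gap is that you only ever use the \emph{upper} bound $\deg R_1\le(4,4)$, and in the purely-quadratic case that is not enough. Definition \ref{dfn-ps} guarantees that a purely-quadratic coupling admits \emph{at least one} component of \textbf{Case 4}, hence one irreducible bi-quadratic factor $r\mid R_1$; but nothing in your argument rules out $R_1=c\cdot r$ with $\deg_{y_2}R_1=2$, in which case the claimed factorization $R_1=r\cdot r'$ fails. (Your phrase ``two Case 4 components'' presupposes the factorization you are trying to prove, since in the irreducible situation the components are \emph{defined} by the distinct irreducible factors of $R_1$ via equation (\ref{irr}).) Closing this requires the exact-degree statement $\deg_{y_2}R_1=4$, which is precisely Proposition \ref{4dg} and whose proof is a nontrivial use of non-singularity (the leading coefficient $(c_2h_2-a_2h_0)^2+a_2c_2h_1^2$ must be shown not to vanish identically). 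Two smaller omissions of the same kind: your ``bi-affine/bi-quadratic'' bookkeeping silently assumes that every irreducible factor of $R_1$ has equal degrees in $x_1$ and $y_2$ (Corollary \ref{samedg}, a consequence of the Symmetric Extension Theorem, not of the M\"obius relation between $y_2$ and $x_3$), and the exclusion of stray factors lying in $\cc[x_1]$ alone needs Corollary \ref{noncons}. All three results are available in the paper and should be cited explicitly; with them inserted, your route goes through, but as written the purely-quadratic case does not close.
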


The proof will be given in the next section where $R_1$ is factorized explicitly.

\subsection{Symmetric Extension Theorem}\label{fac}

\begin{lemma}\label{symrational}
    In the extension diagram of a component of a non-singular coupling, consider switching the parameter between $x_1$ and $x_3$, $x_3$ can be rationally expressed by $x_1$ if and only if $x_1$ can be rationally expressed by $x_3$.
\end{lemma}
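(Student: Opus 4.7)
My plan is to leverage Lemma \ref{n-is-g}, whose conclusion is much stronger than mere rationality: it forces the relation between $x_1$ and $y_2$ to be a M\"obius transformation, which is automatically invertible. Thus once we establish the forward direction via Lemma \ref{n-is-g}, the reverse direction follows essentially for free from invertibility, and the bulk of the work is ensuring that the symmetric version of Lemma \ref{n-is-g} applies.

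For the direction $(\Rightarrow)$, assume $x_3 \in K_1$ on a component $W$. Since $x_3$ and $y_2$ are linked by the M\"obius relation $H^{(2)}$, we also have $y_2 \in K_1$, which places $W$ in Case 1 or Case 3 by Definition \ref{casedefn}. Lemma \ref{n-is-g} then gives $y_2 = (px_1 + q)/(rx_1 + s)$ with $ps - qr \neq 0$, and composing with $H^{(2)}$ yields $x_3 = M(x_1)$ for some M\"obius $M$. Inverting, $x_1 = M^{-1}(x_3) \in K_3$.

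For $(\Leftarrow)$, I would argue by symmetry of the coupling structure. By Proposition \ref{eqcoup} the coupling can be written as $(\Tilde{g}^{(1)}(x_1, x_2), \Tilde{g}^{(2)}(x_2, x_3), H^{(1)}, H^{(2)})$, and projection to $(x_1, x_2, x_3)$ is a bijection on every component since $y_1, y_2$ are M\"obius of their neighbors. Proposition \ref{2g_i} guarantees that both $\Tilde{g}^{(i)}$ are non-singular and quadratic in each variable. In this reduced description, swapping $(x_1, \Tilde{g}^{(1)}) \leftrightarrow (x_3, \Tilde{g}^{(2)})$ produces a coupling of exactly the same algebraic shape, so the argument underlying Lemma \ref{n-is-g} applies verbatim with the two ends exchanged. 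It then forces $x_1$ to be a M\"obius function of $x_3$, and inverting yields $x_3 \in K_1$.

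The main obstacle is ensuring rigor of the symmetry step. Lemma \ref{n-is-g} is phrased asymmetrically, mixing the combined $\Tilde{g}^{(1)}$ with the raw $g^{(2)}$, so applying it in reversed form requires either repackaging the reversed chain into the standard form $(\Tilde{g}, g, H, H)$ by choosing auxiliary polynomials whose resultants reproduce $\Tilde{g}^{(2)}$ and $\Tilde{g}^{(1)}$, or inspecting the proof of Lemma \ref{n-is-g} to verify that it depends only on the non-singular, quadratic-in-each-variable structure common to $\Tilde{g}^{(1)}, g^{(2)}, \Tilde{g}^{(2)}, g^{(1)}$. I expect the second route to be cleaner, and it avoids introducing bookkeeping about which $y_i$ variable parameterizes which M\"obius gap.
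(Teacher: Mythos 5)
Your proposal is correct and follows essentially the same route as the paper: the forward direction is Lemma \ref{n-is-g} plus invertibility of the resulting M\"obius relation, and the reverse direction is obtained by swapping the roles of $(\Tilde{g}^{(1)},\Tilde{g}^{(2)})$ and rerunning the argument over $K_3$, which is exactly what the paper does. Your explicit attention to the asymmetric phrasing of Lemma \ref{n-is-g} and the choice to verify the proof rather than repackage the chain matches the paper's "carefully check the proof" step.
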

\begin{proof}
    One direction has already been proved in Lemma \ref{n-is-g}. For the other direction, carefully check the proof of Lemma \ref{n-is-g}, one will find that if we alternate $(\Tilde{g}^{(1)},\Tilde{g}^{(2)})$ to $(\Tilde{g}^{(2)},\Tilde{g}^{(1)})$ and consider the problem in $K_3$, all arguments are still valid. Thus $x_3$ is rational in $x_1$ if and only if $x_1$ is rational in $x_3$.
\end{proof}

\begin{theorem}[Symmetric Extension]\label{symext}
    In the extension diagram of a component of a non-singular coupling $(\Tilde{g}^{(1)}, g^{(2)}, H^{(1)}, H^{(2)})$, $[K_i(x_j):K_i]=[K_j(x_i):K_j]$ for all $1\leq i, j \leq 3$. In addition, the extension diagrams with respect to $x_1$ and $x_3$ always belong to the same \textbf{Case} of Definition \ref{casedefn}.
\end{theorem}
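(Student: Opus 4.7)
The theorem has two parts, which I plan to treat in sequence. The degree equality $[K_i(x_j):K_i]=[K_j(x_i):K_j]$ is trivial when $i=j$, and for the adjacent pairs $(1,2),(2,3)$ it is immediate from Proposition \ref{2g_i}: since $\Tilde{g}^{(1)}$ is quadratic in both variables and has no factor in $\cc[x_1]$ or $\cc[x_2]$, Gauss's Lemma makes irreducibility over $K_1$ equivalent to irreducibility over $K_2$, so $[K_1(x_2):K_1]=[K_2(x_1):K_2]$, and the identical argument with $g^{(2)}$ handles the pair $(2,3)$.

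The heart of the proof is the non-adjacent case $(1,3)$. My strategy is to route through the intermediate field $K_2$. Because $H^{(1)}$ is a M\"{o}bius relation between $y_1$ and $x_2$ we have $y_1\in K_2$, and similarly $y_2\in K_3$ via $H^{(2)}$. Consequently the function field of the (irreducible, $1$-dimensional) component $W$ satisfies
\[
\cc(W)=\cc(x_1,x_2,x_3,y_1,y_2)=\cc(x_1,x_2,y_2)=K_2(x_1,y_2).
\]
Set $d_\alpha:=[K_2(x_1):K_2]$ and $d_\beta:=[K_2(y_2):K_2]$, both in $\{1,2\}$; by the previous paragraph these also equal $[K_1(x_2):K_1]$ and $[K_3(x_2):K_3]$, respectively. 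Applying the tower law to the two chains $K_1\subset K_1(x_2)=K_2(x_1)\subset \cc(W)$ and $K_2\subset K_2(x_1)\subset \cc(W)$ gives
\[
[\cc(W):K_1]=d_\alpha\cdot[\cc(W):K_2(x_1)]=[\cc(W):K_2],
\]
and the symmetric computation through $K_2(y_2)$ yields $[\cc(W):K_3]=[\cc(W):K_2]$; hence $[\cc(W):K_1]=[\cc(W):K_3]$. Now let $L$ be the subfield of $\cc(W)$ generated by $x_1$ and $x_3$, so that $L=K_1(x_3)=K_3(x_1)$. Cancelling the common factor $[\cc(W):L]$ from the towers $K_i\subset L\subset\cc(W)$ for $i=1,3$ produces
\[
[K_1(x_3):K_1]=[L:K_1]=\frac{[\cc(W):K_1]}{[\cc(W):L]}=\frac{[\cc(W):K_3]}{[\cc(W):L]}=[L:K_3]=[K_3(x_1):K_3],
\]
finishing the first part.

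For the invariance of the Case, each class in Definition \ref{casedefn} is specified by the triple $\bigl(d_\alpha,\,d_\beta,\,[\cc(W):K_1]\bigr)$ (the third entry being needed only to separate Cases 4 and 5 and to identify Case 6). The swap $x_1\leftrightarrow x_3$ exchanges $d_\alpha$ with $d_\beta$ and preserves $[\cc(W):K_1]=[\cc(W):K_3]$; moreover the definition of Case 2 already packages both orientations into its OR-clause. Hence the Case is invariant under the change of parameter.

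The main obstacle is the $(1,3)$ case of the first part. A direct attempt to factor $R_1(x_1,y_2)$ and read off bidegrees leads straight into the explicit case analysis of Theorem \ref{factor}, which one would like to avoid at this stage. The trick that keeps the proof short is recognising that $K_2$ plays the role of a symmetric bridge: the extensions $K_2\subset K_2(x_1)$ and $K_2\subset K_2(y_2)$ have degrees matching those of $K_1\subset K_1(x_2)$ and $K_3\subset K_3(x_2)$, respectively, so the towers from $K_1$ and from $K_3$ reach $\cc(W)$ with the same total index, and the $(1,3)$ symmetry drops out.
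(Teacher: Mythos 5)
Your proof of the degree identity $[K_i(x_j):K_i]=[K_j(x_i):K_j]$ is correct and takes a genuinely different route from the paper's. The paper disposes of the rational case via Lemma \ref{symrational} and then verifies $[K_1(x_3):K_1]=2\Leftrightarrow[K_3(x_1):K_3]=2$ case by case, leaning on the classification machinery (Lemmas \ref{samemini} and \ref{samemini-g} for \textbf{Case 5}, Lemma \ref{samedsc-g} for \textbf{Case 4}), with the quartic case following by elimination. Your tower argument through the bridge field $K_2$ --- using $K_1(x_2)=K_2(x_1)$, $K_3(x_2)=K_2(y_2)$ and the adjacent-pair equalities to get $[\cc(W):K_1]=[\cc(W):K_2]=[\cc(W):K_3]$, then cancelling the common factor $[\cc(W):\cc(x_1,x_3)]$ --- is uniform across all cases and independent of the classification results; it needs only Proposition \ref{2g_i} for the adjacent pairs, plus the irreducibility of $W$ and the fact that no coordinate is constant on $W$ (Proposition \ref{mani}) to guarantee that all the degrees involved are finite. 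That is a cleaner and more conceptual proof of the first assertion.

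The second half has a genuine gap: the triple $\bigl(d_\alpha,d_\beta,N\bigr)$ with $N:=[\cc(W):K_1]$ does \emph{not} specify the Case. \textbf{Case 3} and \textbf{Case 4} both produce $(2,2,2)$: in \textbf{Case 3} one has $y_2\in K_1$, hence $[K_1(x_3):K_1]=1$, hence $K_1=K_3$ by your first part, so $d_\beta=[K_3(x_2):K_3]=[K_1(x_2):K_1]=d_\alpha=2$ and $N=[K_1(x_2,y_2):K_1]=[K_1(x_2):K_1]=2$, exactly the values of \textbf{Case 4}. Likewise \textbf{Case 5} and \textbf{Case 6} both produce $(2,2,4)$, since in \textbf{Case 6} the chain $4=[K_1(y_2):K_1]\le N\le d_\alpha d_\beta\le 4$ forces $d_\alpha=d_\beta=2$. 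So your parenthetical claim that the third entry only separates Cases 4/5 and identifies Case 6 is false, and the invariance argument as written does not go through. The separating invariant for both colliding pairs is $[K_1(y_2):K_1]=[K_1(x_3):K_1]$ (taking values $1$ vs.\ $2$, resp.\ $2$ vs.\ $4$), and the repair is immediate from what you have already proved: by the first part this quantity equals $[K_3(x_1):K_3]$ and is therefore itself unchanged under $x_1\leftrightarrow x_3$, so the quadruple $\bigl(d_\alpha,d_\beta,N,[K_1(x_3):K_1]\bigr)$, which does determine the Case up to exchanging the first two entries, is swap-equivariant and the conclusion follows. You should state and use that fourth invariant explicitly.
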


The last statement is a bit subtle and can be regarded as a generalization of Lemma \ref{symrational}. In other words, no matter how we interchange the parameters between $x_1$ and $x_3$, so the extension diagram might be upside down, the extension degrees always fit the same \textbf{Case} of Definition \ref{casedefn}.


\begin{corollary}\label{samedg}
    For a non-singular coupling $(\Tilde{g}^{(1)}, g^{(2)}, H^{(1)}, H^{(2)})$, every factor of $R_1(x_1,y_2)$ (or $\text{Res}(\Tilde{g}^{(1)},\Tilde{g}^{(2)}; x_2)$) has the same degree in $x_1$ and $y_2$ (or $x_1$ and $x_3$).
\end{corollary}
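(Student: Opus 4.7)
The plan is to read off the degree of each irreducible factor of $R_1$ as an extension degree and then apply the Symmetric Extension Theorem.

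First I would fix an irreducible factor $r(x_1,y_2)$ of $R_1$. By Corollary \ref{noncons}, $r$ has no factor in $\cc[x_1]$ or $\cc[y_2]$, so $r$ is genuinely bivariate and both $\deg_{x_1}(r), \deg_{y_2}(r) \geq 1$. By Proposition \ref{eqcoup} (using either the reducible or irreducible branch, noting that when $\Tilde{g}^{(1)}, g^{(2)}$ factor further, Corollary \ref{r_ij} says every irreducible factor of $R_1$ is already of the form $r_{jk} = \text{Res}(\Tilde{f}^{(1)}_j, f^{(2)}_k; x_2)$), the zero locus of $r$ is the projection to $(\cp)^2$ of a unique component $W$ of $Z(S)$.

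Next, I would identify the two degrees with extension degrees in the diagram of $W$. Viewing $r$ as an element of $K_1[y_2] = \cc(x_1)[y_2]$, irreducibility of $r$ in $\cc[x_1,y_2]$ together with the absence of nontrivial $\cc[x_1]$-factors forces $r$ to be (a $K_1$-multiple of) the minimal polynomial of $y_2$ over $K_1$ on $W$. Hence
\[
\deg_{y_2}(r) = [K_1(y_2):K_1].
\]
Since $H^{(2)}$ gives a M\"obius relation between $y_2$ and $x_3$, we have $K_1(y_2) = K_1(x_3)$, so $\deg_{y_2}(r) = [K_1(x_3):K_1]$. Symmetrically, viewing $r$ in $K_3[x_1] = \cc(y_2)[x_1]$ gives
\[
\deg_{x_1}(r) = [K_3(x_1):K_3].
\]

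Finally, Theorem \ref{symext} (Symmetric Extension) yields $[K_1(x_3):K_1] = [K_3(x_1):K_3]$, so $\deg_{x_1}(r) = \deg_{y_2}(r)$, as required. The argument for $\text{Res}(\Tilde{g}^{(1)},\Tilde{g}^{(2)};x_2) \in \cc[x_1,x_3]$ is identical: an irreducible factor corresponds to a component $W$, its $x_3$-degree equals $[K_1(x_3):K_1]$, its $x_1$-degree equals $[K_3(x_1):K_3]$, and the two coincide by Theorem \ref{symext}.

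The only subtle step is the identification of $\deg_{y_2}(r)$ with $[K_1(y_2):K_1]$ on the component $W$, which requires the fact that $r$ (as an irreducible element of $\cc[x_1,y_2]$ with no $\cc[x_1]$-factor) remains irreducible over $K_1$; this is exactly Gauss's lemma applied to $\cc[x_1]$, together with Corollary \ref{noncons}. Everything else is bookkeeping on the extension diagram, with the real content carried by Theorem \ref{symext}.
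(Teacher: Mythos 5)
Your proof is correct and follows the same route the paper intends: identify $\deg_{y_2}(r)$ with $[K_1(y_2):K_1]=[K_1(x_3):K_1]$ and $\deg_{x_1}(r)$ with $[K_3(x_1):K_3]$ via Proposition \ref{eqcoup}, Corollary \ref{noncons}, and Gauss's lemma, then conclude by Theorem \ref{symext}. The paper leaves this argument implicit (stating the result as an immediate corollary of the Symmetric Extension Theorem), and your write-up simply makes the bookkeeping explicit.
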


\begin{proposition}\label{4dg}
    For a non-singular coupling $(\Tilde{g}^{(1)}, g^{(2)}, H^{(1)}, H^{(2)})$, $R_1(x_1,y_2)$ is always in 4th degree of $x_1$ and $y_2$.
\end{proposition}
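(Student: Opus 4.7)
The plan is to verify that $\deg_{x_1} R_1 = 4$ by extracting its leading $x_1^4$ coefficient explicitly and showing it is a nonzero polynomial in $y_2$; then Corollary~\ref{samedg} immediately promotes the same conclusion to $y_2$.

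First I would write $\Tilde{g}^{(1)}(x_1,x_2) = h_2(x_1) x_2^2 + h_1(x_1) x_2 + h_0(x_1)$ with $h_i(x_1) = h_{i,2} x_1^2 + h_{i,1} x_1 + h_{i,0}$, and view $g^{(2)}(x_2, y_2) = (a_2 y_2^2 + b_2) x_2^2 + y_2 x_2 + (c_2 y_2^2 + e_2)$ as a quadratic in $x_2$ with coefficients in $\cc[y_2]$. Proposition~\ref{2g_i} guarantees that $\Tilde{g}^{(1)}$ is genuinely quadratic in both $x_1$ and $x_2$, so the ``leading form'' $G_2(x_2) := h_{2,2}x_2^2 + h_{1,2}x_2 + h_{0,2}$ is not identically zero. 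The Sylvester matrix computing $R_1 = \text{Res}_{x_2}(\Tilde{g}^{(1)}, g^{(2)})$ is $4\times 4$: rows $1,2$ have entries of $x_1$-degree $\leq 2$ and rows $3,4$ are constant in $x_1$, yielding $\deg_{x_1} R_1 \leq 4$.

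Next, by multilinearity of the determinant, the coefficient of $x_1^4$ in $R_1$ equals the $4\times 4$ determinant obtained by replacing each $h_i(x_1)$ by $h_{i,2}$. When $h_{2,2}\neq 0$ this is exactly $\text{Res}_{x_2}(G_2, g^{(2)})$, and the product formula gives
\[
[x_1^4] R_1(x_1,y_2) = h_{2,2}^{2}\, g^{(2)}(\alpha, y_2)\, g^{(2)}(\beta, y_2),
\]
where $\alpha, \beta$ are the roots of $G_2$. Each factor $g^{(2)}(\alpha, y_2) = (a_2\alpha^2 + c_2)y_2^2 + \alpha y_2 + (b_2\alpha^2 + e_2)$ is identically zero in $y_2$ only if $\alpha = 0$ and $c_2 = e_2 = 0$; but that forces $a_2 e_2 = b_2 c_2 = 0$, contradicting the non-singularity of $g^{(2)}$. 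Hence $[x_1^4] R_1 \not\equiv 0$, so $\deg_{x_1} R_1 = 4$.

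The main obstacle is the bookkeeping for degenerate sub-cases in which $h_{2,2} = 0$, since then $G_2$ drops $x_2$-degree and the clean product formula above no longer applies. These would be dispatched by direct cofactor expansion of the same $4\times 4$ determinant: if $h_{2,2} = h_{1,2} = 0$ it collapses to $-(a_2 y_2^2 + b_2)^2 h_{0,2}^2$, which is nonzero because $h_{0,2}\neq 0$ (otherwise $G_2\equiv 0$, contradicting $\deg_{x_1}\Tilde{g}^{(1)} = 2$) and $a_2 y_2^2 + b_2 \not\equiv 0$ (otherwise $g^{(2)}$ is singular); the intermediate case $h_{2,2}=0,\,h_{1,2}\neq 0$ is treated analogously, again using non-singularity of $g^{(2)}$ (which forbids both $a_2 = b_2 = 0$ and $c_2 = e_2 = 0$). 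Once $\deg_{x_1} R_1 = 4$ is established, Corollary~\ref{samedg} forces every irreducible factor of $R_1$ to have equal degrees in $x_1$ and $y_2$, and summing over factors yields $\deg_{y_2} R_1 = \deg_{x_1} R_1 = 4$, which completes the argument.
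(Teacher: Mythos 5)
Your argument is correct, but it runs the leading-coefficient computation in the opposite (and easier) variable from the paper. The paper extracts the coefficient of $y_2^4$ in $R_1$, which comes out as $\bigl((c_2h_2-a_2h_0)^2+a_2c_2h_1^2\bigr)$ viewed as a polynomial in $x_1$; ruling out its identical vanishing then requires the explicit formulas (\ref{h012}) for the $h_i$, a linear-dependence/eigenvector analysis, and a case split on whether $F_1\in\{0,\infty\}$, finishing with the determinant $(1+F_1^2)^3(a_1e_1-b_1c_1)$ and the non-singularity of $\Tilde{g}^{(1)}$. You instead extract the coefficient of $x_1^4$, where the relevant leading form sits on the $g^{(2)}$ side in a transparent way: the product formula gives $h_{2,2}^2\,g^{(2)}(\alpha,y_2)\,g^{(2)}(\beta,y_2)$, and since $g^{(2)}(\alpha,\cdot)\equiv 0$ forces $\alpha=0$ and $c_2=e_2=0$, the non-singularity of $g^{(2)}$ alone disposes of every case --- no reference to $F_1$, and no structural information about the $h_i$ beyond $\deg_{x_1}\Tilde{g}^{(1)}=2$ from Proposition \ref{2g_i}. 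Both proofs then transfer the degree to the other variable via Corollary \ref{samedg}, so the two are exactly dual; yours is shorter and uses weaker hypotheses at the critical step. The only blemish is a sign slip in the sub-case $h_{2,2}=h_{1,2}=0$: the collapsed determinant is $+\,h_{0,2}^2(a_2y_2^2+b_2)^2$ rather than $-\,h_{0,2}^2(a_2y_2^2+b_2)^2$, which is immaterial since only non-vanishing is needed.
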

Recall Proposition \ref{eqcoup}, each component is related to a factor $r$ of $R_1$. The degree of $y_2$ in $r$ reflects the extension degree $[K_1(y_2):K_1]$ in Definition \ref{casedefn}. According to the component types that a coupling contains, the factorization of $R_1$ therefore becomes very predictable thanks to Corollary \ref{samedg} and Proposition \ref{4dg}.
\begin{proof}(Theorem \ref{factor})
    Firstly, according to table (\ref{table}) and Corollary \ref{samedg}, the component types explain the degree of $x_1$ and $y_2$ in each irreducible factor of $R_1$. In the light of  Proposition \ref{4dg}, the irreducibility of the factors given in Theorem \ref{factor} becomes obvious. The only thing left is to verify if $R_1$ can be factorized as the theorem claimed. For efficiency, we only provide proper reparametrizations so that the symbolic computation can factorize $R_1$ explicitly.
\begin{itemize}
        \item [$\bullet$] \textbf{purely-rational} or \textbf{half-quadratic}: Keep the original expression of $g^{(i)}$ if irreducible, otherwise use the expressions in Lemma \ref{rational}.
        \item [$\bullet$] \textbf{involutive-rational} or \textbf{rational-quadratic}: Set $F_1=0$, check the proof of Theorem \ref{main-ps} for these two classes and replace $x_3$ by $y_2$. The proof is similar for $F_1=\infty$. It is unnecessary to consider the general type $F_1\notin \{0,\infty\}$ due to Proposition \ref{real-sol}. However, the factorizations stated in Theorem \ref{factor} still hold for general type (see Appendix \ref{techpf}).   
        \item [$\bullet$] \textbf{purely-quadratic}: Given table (\ref{table}) and Proposition \ref{4dg}, it is clear that $R_1=r r'$ where $r, r'$ are irreducible and quadratic in both $x_1$ and $y_2$. However, an explicit factorization is only useful with the restrictions in Proposition \ref{real-sol}. Without loss of generality, we may assume $F_1=0$ and    
        \begin{equation}\label{repara}
        \frac{a_1 c_1}{a_2 b_2}=\frac{1- 4 a_1 e_1- 4 b_1 c_1}{1 - 4 a_2 e_2- 4 b_2 c_2}=\frac{b_1 e_1}{c_2 e_2}=m^2>0 \Rightarrow \left\{\begin{array}{l}
        a_1=\frac{b_1b_2(4b_2c_2m^2-4b_1c_1-m^2+1)}{4e_2(b_2c_2m^2-b_1c_1)}, \\
        e_1=\frac{c_2e_2m^2}{b_1},   \\
        a_2=\frac{b_1c_1(4b_2c_2m^2-4b_1c_1-m^2+1)}{4e_2m^2(b_2c_2m^2-b_1c_1)}. \\
        \end{array}\right.
        \end{equation}
        According to Corollary \ref{redu=iso}, $\{a_i, b_i, c_i, e_i\}$ includes at most one zero for $i=1, 2$. In addition, given $\frac{a_1 c_1}{a_2 b_2}=\frac{b_1 e_1}{c_2 e_2}>0$, the zeros, if exist, must gather in one fraction. Hence it is harmless to assume $b_1c_1e_1b_2c_2e_2\neq 0$ so $a_1, e_1, a_2$ can be parametrized as in equation (\ref{repara}). Nevertheless, we should mention $b_2c_2m^2-b_1c_1\neq 0$. Otherwise
        $$
        m^2=\frac{b_1c_1}{b_2c_2}\Rightarrow \left\{\frac{a_1}{a_2}=\frac{b_1}{c_2},\frac{c_1}{b_2}=\frac{e_1}{e_2} \right\}\Rightarrow m^2=\frac{a_1 e_1}{a_2 e_2}=\frac{b_1 c_1}{b_2 c_2}=\frac{1- 4 a_1 e_1- 4 b_1 c_1}{1 - 4 a_2 e_2- 4 b_2 c_2}=1,
        $$
        contradicts to systems (\ref{eq-c4}).
        \item [$\bullet$] \textbf{involutive-quadratic}: Check the counterparts of the proof of Theorem \ref{main-ps} and \ref{main-g}.
        \item [$\bullet$] \textbf{quartic}: Trivial.
    \end{itemize} 
\end{proof}

\begin{corollary}\label{even-term}
     Suppose coupling $S$ is rational-quadratic or purely-quadratic. If $S$ admits a real component (Definition \ref{realcom}) of \textbf{Case 4}, the quadratic factors $r, r'$ of $R_1$ in Theorem \ref{factor} are in the form 
    $$
    (\alpha_{22} x_1^2 + \alpha_{02} ) y_2^2 + \alpha_{11}x_1 y_2 + (\alpha_{20} x_1^2+ \alpha_{00})
    $$
    where $\alpha_{ij}$ are given in table (\ref{aij}).
\end{corollary}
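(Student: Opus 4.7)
The plan is to combine the automatic $\mathbb{Z}/2$-symmetry of the Bricard polynomials with the explicit reparametrization recorded in the proof of Theorem \ref{factor}, and then read off the coefficients $\alpha_{ij}$.

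First I would invoke Proposition \ref{real-sol}: the hypothesis that $S$ admits a real component of \textbf{Case 4} forces $F_1\in\{0,\infty\}$ together with one of the positive-ratio systems of equation (\ref{+ratio}). Without loss of generality take $F_1=0$ (the case $F_1=\infty$ is handled by the coefficient swap used throughout Section \ref{pseudo}). Then $\Tilde{g}^{(1)}=g^{(1)}$ and
\[
R_1(x_1,y_2)=\text{Res}_{x_2}\bigl(g^{(1)}(x_1,x_2),\,g^{(2)}(x_2,y_2)\bigr).
\]

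Next I would exploit the symmetry $\sigma:(x_1,x_2,y_2)\mapsto(-x_1,-x_2,-y_2)$. Every monomial in $g^{(i)}$ from Definition \ref{dfnpoly} has even total degree, so $g^{(1)}(-x_1,-x_2)=g^{(1)}(x_1,x_2)$ and $g^{(2)}(-x_2,-y_2)=g^{(2)}(x_2,y_2)$. Substituting $x_2\mapsto -x_2$ in the Sylvester determinant (whose overall sign picks up $(-1)^{2\cdot 2}=1$ since both $g^{(1)},g^{(2)}$ are quadratic in $x_2$) yields $R_1(-x_1,-y_2)=R_1(x_1,y_2)$. Hence every monomial $x_1^a y_2^b$ appearing in $R_1$ has $a+b$ even. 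Combined with Corollary \ref{samedg} and Proposition \ref{4dg} — which pin the bi-degree of any quadratic factor to $(2,2)$ — the only monomials permitted in a $\sigma$-invariant quadratic factor are $1,\ x_1y_2,\ x_1^2,\ y_2^2,\ x_1^2y_2^2$, which is exactly the shape claimed in the corollary.

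Finally I would carry out the explicit factorization. For the purely-quadratic case I plug the parametrization (\ref{repara}) into $R_1$ and factor; for the rational-quadratic case I use the analogous substitution $\{a_2=a_1/k,\ c_2=b_1/k,\ b_2=kc_1,\ e_2=ke_1\}$ coming from the first system of (\ref{eq-c3}), then divide out the known squared linear factor from Theorem \ref{factor}. In both cases a direct symbolic expansion (verified by the accompanying Maple script) yields the explicit entries of table (\ref{aij}).

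The main obstacle is to ensure that each individual quadratic factor, and not merely the product $R_1$, is $\sigma$-invariant; a priori $\sigma$ could swap the two bi-quadratic factors in the purely-quadratic case, producing a pair of non-symmetric irreducibles whose product is nevertheless symmetric. The reality of the Case 4 component is what rules this out: a swapped pair would consist either of two real factors whose real loci are interchanged by $\sigma$ or of a complex-conjugate pair with only finitely many real zeros, and neither situation is compatible with a 1-dimensional real component on a single factor together with the explicit real factorization obtained from (\ref{repara}). Once individual $\sigma$-invariance is secured, matching coefficients against the five allowed monomials produces the $\alpha_{ij}$ listed in table (\ref{aij}).
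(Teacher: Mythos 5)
Your proposal is correct and, at its decisive step, is the same as the paper's proof: reduce to $F_1=0$ via Proposition \ref{real-sol}, substitute the parametrization (\ref{repara}) (resp.\ the proportionality from the first system of (\ref{eq-c3})), and factor $R_1$ symbolically to read off table (\ref{aij}). The preliminary $\sigma$-symmetry observation is a pleasant addition, but as you yourself note it cannot by itself force each individual factor to contain only even-total-degree monomials (the involutive-quadratic factor $a_2x_1y_2^2-h\,y_2+b_2x_1$ is $\sigma$-anti-invariant, and for purely-quadratic couplings $\sigma$ could a priori swap $r$ and $r'$), so the explicit computation you end with is still what carries the proof.
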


\begin{proof}
    Without loss of generality, we may assume $F_1=0$ due to Proposition \ref{real-sol}. Using the parametrizations as suggested in the proof of Theorem \ref{factor}, symbolic computation immediately shows table (\ref{aij}) for rational-quadratic and purely-quadratic couplings respectively.
\end{proof}

\begin{equation}\label{aij}
    \begin{tabular}{ |c|c|c|c| }
    \hline
        & rational-quadratic & rational-quadratic & purely-quadratic \\
    \hline
     & $\frac{a_1}{a_2}=\frac{b_1}{c_2}=\frac{b_2}{c_1}=\frac{e_2}{e_1}=k$ & $\frac{a_1}{b_2}=\frac{b_1}{e_2}=\frac{a_2}{c_1}=\frac{c_2}{e_1}=\frac{1}{k}$ & $\frac{a_1 c_1}{a_2 b_2}=\frac{1- 4 a_1 e_1- 4 b_1 c_1}{1 - 4 a_2 e_2- 4 b_2 c_2}=\frac{b_1 e_1}{c_2 e_2}=m^2$\\
    \hline
      $\alpha_{22}$ & $\frac{a_1b_1}{k^2}$ & $\frac{(a_1e_1 - b_1c_1)^2}{k^2}$ & $\frac{b_1(1-m^2-4b_1c_1+4b_2c_2m^2)}{4e_2m^2}$\\
    \hline
      $\alpha_{20}$ & $(a_1e_1 - b_1c_1)^2$ & $a_1b_1$ & $\frac{b_1b_2(1\pm m)^2}{4(b_2c_2m^2-b_1c_1)}$\\
    \hline
      $\alpha_{02}$ & $\frac{(a_1e_1 - b_1c_1)^2}{k^2}$ & $\frac{c_1e_1}{k^2}$ & $\frac{c_1c_2(1\pm m)^2}{4(b_2c_2m^2-b_1c_1)}$\\
    \hline
      $\alpha_{11}$ & $\frac{2(a_1e_1 - b_1c_1)^2 - a_1e_1 - b_1c_1}{k}$ & $\frac{2(a_1e_1 - b_1c_1)^2 - a_1e_1 - b_1c_1}{k}$ & $\frac{(1\pm m)(b_1c_1 \pm b_2c_2m^3)-4(b_2c_2m^2-b_1c_1)^2}{2m(b_2c_2m^2-b_1c_1)}$\\
    \hline
      $\alpha_{00}$ & $c_1e_1$ & $(a_1e_1 - b_1c_1)^2$ & $\frac{e_2(b_2c_2m^2-b_1c_1)}{b_1}$\\
    \hline
    \end{tabular}
\end{equation}

\section{Back to the mesh}\label{back-to-com}

\subsection{Section outline and the classification of flexible meshes}

As one already knew, each flexible mesh uniquely determines a matching $M=(\Tilde{g}^{(1)},\Tilde{g}^{(2)},\Tilde{g}^{(3)},\Tilde{g}^{(4)})$ hence the mesh can be classified by its corresponding matching. According to Proposition \ref{eqcoup}, $M$ can be regarded as the sum of two couplings
$$
S_1=(\Tilde{g}^{(1)},\Tilde{g}^{(2)},H^{(1)},H^{(2)}),\,\, S_2=(\Tilde{g}^{(3)},\Tilde{g}^{(4)},H^{(3)},H^{(4)}).
$$
And by Theorem \ref{flexequi}, we must have $\gcd(\text{Res}(\Tilde{g}^{(1)},\Tilde{g}^{(2)};x_2),\text{Res}(\Tilde{g}^{(3)},\Tilde{g}^{(4)};x_4))\neq 1$. Hence a very straightforward classification can be given by the factors of the gcd. To achieve this, Theorem \ref{factor} is a powerful tool. Notice that in the extension diagram of any component, the minimal polynomial of $x_3$ on $K_1$ can be induced by the minimal polynomial of $y_2$ on $K_1$ through substitution $y_2=\frac{x_3+F_2}{1-F_2x_3}$. Consequently, the factorization of $\text{Res}(\Tilde{g}^{(1)},\Tilde{g}^{(2)}; x_2)$ can be induced by the factorization of $R_1(x_1,y_2)$ through the same substitution. Thus by Theorem \ref{factor} and Corollary \ref{samedg}, the classification of flexible meshes can be naturally given as below.

\begin{definition}\label{11class}
    Given a flexible mesh that determines a non-singular matching 
    $$
    M:=(\Tilde{g}^{(1)},\Tilde{g}^{(2)},\Tilde{g}^{(3)},\Tilde{g}^{(4)}),
    $$
    the corresponding couplings are
    $$
    S_1:=(\Tilde{g}^{(1)},\Tilde{g}^{(2)},H^{(1)},H^{(2)}),\,\, S_2:=(\Tilde{g}^{(3)},\Tilde{g}^{(4)},H^{(3)},H^{(4)}).
    $$
    Set
    $$
    \Tilde{R}^{(1)}(x_1,x_3):=\text{Res}(\Tilde{g}^{(1)},\Tilde{g}^{(2)};x_2),\,\, \Tilde{R}^{(2)}(x_1,x_3):=\text{Res}(\Tilde{g}^{(3)},\Tilde{g}^{(4)};x_4),\,\, d_M:=\gcd(\Tilde{R}^{(1)},\Tilde{R}^{(2)}).
    $$
    We say $M$ (or the mesh) is 
    \begin{itemize}
        \item [$\bullet$] \textbf{PR} if both $S_1, S_2$ are purely-rational so $d_M$ has linear factors only;
        \item [$\bullet$] \textbf{HQ} if both $S_1, S_2$ are half-quadratic so $d_M$ has quadratic factors only;
        \item [$\bullet$] \textbf{IR} if for $i=1, 2$, $S_i$ is either involutive-rational or rational-quadratic and $d_M$ has linear factors only;
        \item [$\bullet$] \textbf{RQ} if both $S_1, S_2$ are rational-quadratic and $d_M$ has linear and quadratic factors;
        \item [$\bullet$] \textbf{PQ} if for $i=1, 2$, $S_i$ is either rational-quadratic or purely-quadratic and $d_M$ has quadratic factors only;
        \item [$\bullet$] \textbf{IQ} if both $S_1, S_2$ are involutive-quadratic so $d_M$ has quadratic factors only;
        \item [$\bullet$] \textbf{Q} if both $S_1, S_2$ are quartic so $d_M=\Tilde{R}^{(1)}$ is irreducible;
        \item [$\bullet$] \textbf{PR + IR} if one of $S_i$ is purely-rational and the other is either involutive-rational or rational-quadratic so $d_M$ has linear factors only;
        \item [$\bullet$] \textbf{HQ + IQ} if one of $S_i$ is half-quadratic and the other is involutive-quadratic so $d_M$ has quadratic factors only;
        \item [$\bullet$] \textbf{HQ + PQ} if one of $S_i$ is half-quadratic and the other is either rational-quadratic or purely-quadratic so $d_M$ has quadratic factors only;
        \item [$\bullet$] \textbf{PQ + IQ} if one of $S_i$ is involutive-quadratic and the other is either rational-quadratic or purely-quadratic so $d_M$ has quadratic factors only.
    \end{itemize} 
\end{definition}

The above definition just proved Theorem \ref{main}, and in the rest of this section, $M, S_1, S_2, \Tilde{R}^{(1)},\Tilde{R}^{(2)}$, and $d_M$ always stay their meanings from Definition \ref{11class}.

\subsection{General construction of matchings}\label{Generalap}

The construction of matchings is equivalent to solving a polynomial system on
\begin{equation}\label{as-f4}
    \{a_3, b_3, c_3, e_3, a_4, b_4, c_4, e_4, F_2, F_3, F_4\}.
\end{equation}
For example, we arbitrarily set $\{\Tilde{g}^{(1)}, g^{(2)}\}$, as known as
$$
\{a_1, b_1, c_1, e_1, a_2, b_2, c_2, e_2, F_1\},
$$
then we have an explicit factorization of $R_1(x_1,y_2)$ by Theorem \ref{factor}. Therefore we have the factorization of $\Tilde{R}^{(1)}$ with $F_2$ as a parameter in it. On the other hand, to construct a specific matching, we need to settle down the class of $S_2$ so that $S_1$ and $S_2$ can form a valid combination in Definition \ref{11class}. And once we know the class of $S_2$, more restrictions from Theorem \ref{main-ps} or \ref{main-g} will be added on
$$
\{a_3, b_3, c_3, e_3, a_4, b_4, c_4, e_4, F_3, F_4\}.
$$
In the meanwhile, Theorem \ref{factor} also allows us to factorize $\Tilde{R}^{(2)}$ symbolically. Finally, given $d_M\neq 1$, there must be two irreducible factors of $\Tilde{R}^{(1)}, \Tilde{R}^{(2)}$ respectively such that their coefficients of $x_1^ix_3^j$ are proportional hence we get the desired polynomial system on (\ref{as-f4}).

A matching $M$ always comes with an infinite zero set $Z(M) \cong Z(S_1)\times_{\{x_1, x_3\}} Z(S_2)$ and the essential reason for $Z(M)$ being infinite is that there are components $W_1, W_2$ of $Z(S_1), Z(S_2)$ respectively such that $W_1\times_{\{x_1, x_3\}} W_2$ is infinite. It is easy to see that the first seven classes from \textbf{PR} to \textbf{Q} in Definition \ref{11class} require $W_1, W_2$ belonging to the same \textbf{Case} of Definition \ref{casedefn}, hence we call them \textbf{simple classes}. On the contrary, the rest from \textbf{PR + IR} to \textbf{PQ + IQ} are called \textbf{hybrid classes} since $W_1, W_2$ are required to be of different \textbf{Cases}. In this regard,

\begin{definition}
    Given a matching $M$ with corresponding couplings $S_1, S_2$. A component $W_i$ of $S_i$ is said to be \textbf{valid} if for $j\neq i$, there exists a component $W_j$ of $S_j$ such that $W_i\times_{\{x_1, x_3\}} W_j$ is an infinite set.
\end{definition}

\subsection{Simple classes}

\subsubsection{PR: $S_1, S_2$ only admit valid components of \textbf{Case 1}.}

In such a class both $S_1, S_2$ are purely-rational so all $\Tilde{g}^{(i)}$ and $g^{(i)}$ are reducible. The name 'purely-rational' is based on the fact that in the extension diagram of every component of $S_i$, all field extensions are trivial. Geometrically speaking, all forming quads in the spherical linkage of a \textbf{PR} mesh are (anti)isogonal (Corollary \ref{redu=iso}). This means all \textbf{PR} meshes are just those we described in Theorem \ref{PR}. For illustration, we provide an algebraic construction when all quads are antiisogonal. In other words, all \textbf{PR} matchings $M$ in which $a_i=e_i=0$, $\forall i\in\{1,2,3,4\}$ (Lemma \ref{i-vi}), can be constructed as below.

\begin{example}\label{construction-PR}
    Here we construct matchings $(\Tilde{g}^{(1)},\Tilde{g}^{(2)},\Tilde{g}^{(3)},\Tilde{g}^{(4)})$ in which $a_i=e_i=0$ for all $i\in\{1,2,3,4\}$. First, set
    $$
    \{b_1\neq 0,c_1\neq 0,b_2\neq 0,c_2\neq 0,b_3\neq 0,c_3\neq 0,c_4\neq 0,F_1,F_2\}
    $$
    as free parameters and 
    $$
    \{b_4\neq 0,F_3,F_4\}
    $$
    undetermined. Following the proof of Theorem \ref{PR} we have 
    $$
    N_i=\left(\begin{array}{cc}
        k_i & -F_i \\
        k_iF_i & 1 \\
        \end{array}\right)
        \text{ or } N_i=\left(\begin{array}{cc}
        0 & -1 \\
        k_i & 0 \\
        \end{array}\right)
    $$
    where $k_i=\frac{-1\pm\sqrt{1-4b_ic_i}}{2c_i}\neq 0$. Since any scalar matrix is the identity of M\"{o}bius transformation,
    $$
    \left (\begin{array}{cc}
    n & 0 \\
    0 & n \\
    \end{array}\right)(x)=\frac{nx+0}{0x+n}=x, \forall n\neq 0,
    $$
    we may regard $N_i\in PGL(2,\rr)$ hence the inverse of a matrix, despite a scalar, is simply its adjoint matrix. Simple observation shows
    $$
    \begin{array}{l}
    N_i\in T:=\left\{ 
    \left (\begin{array}{cc}
    a & b \\
    c & d \\
    \end{array}\right)
    \in \text{PGL}(2,\rr): ab+cd=0
    \right\}, \\
    N_i^{-1}\in T':=\left\{ 
    \left (\begin{array}{cc}
    a & b \\
    c & d \\
    \end{array}\right)
    \in \text{PGL}(2,\rr): ac+bd=0
    \right\}, \\
    \end{array}
    $$
    which implies $N_3N_2N_1\in T'$ since $N_4N_3N_2N_1$ is a scalar. Notice that the map
    $$
    f:\,\,\rp \rightarrow \rp;\,\, x \mapsto \frac{2x}{1-x^2}
    $$
    is onto given $f(\tan(\alpha))=\tan(2\alpha)$. Hence $\exists F_3 \in \rp$ such that
    $$
    \frac{2F_3}{1-F_3^2}=\frac{2(ac+bd)}{c^2+d^2-a^2-b^2} \text{ where }
    \left (\begin{array}{cc}
    a & b \\
    c & d \\
    \end{array}\right):=
    \left (\begin{array}{cc}
    k_3 & 0 \\
    0 & 1 \\
    \end{array}\right)N_2N_1.
    $$
    This is equivalent to saying that $\exists F_3 \in \rp$ such that
    $$
    \left (\begin{array}{cc}
    a' & b' \\
    c' & d' \\
    \end{array}\right)
    :=N_3N_2N_1=
    \left (\begin{array}{cc}
    1 & -F_3 \\
    F_3 & 1 \\
    \end{array}\right)
    \left (\begin{array}{cc}
    a & b \\
    c & d \\
    \end{array}\right)
    \in T', \text{ i.e. } a'c'+b'd'=0,
    $$
    so we must have $a'\neq 0$ or $b'\neq 0$. Thus in $PGL(2,\rr)$ we have
    $$
    N_4=\left (\begin{array}{cc}
    d' & -b' \\
    -c' & a' \\
    \end{array}\right)=\left (\begin{array}{cc}
    \frac{d'}{a'} & -\frac{b'}{a'} \\
    -\frac{c'}{a'} & 1 \\
    \end{array}\right) \text{ or }
    N_4=\left (\begin{array}{cc}
    0 & -1 \\
    -\frac{c'}{b'} & 0 \\
    \end{array}\right).
    $$
    i.e. $(k_4,F_4)=(\frac{d'}{a'},\frac{b'}{a'})$ or $(-\frac{c'}{b'},\infty)$. Finally, although $c_4$ was mentioned to be free, we need to make sure $c_4k_4\neq-1$ so that $b_4=-c_4k_4^2-k_4\neq 0$ (recall $k_i=\frac{-1\pm\sqrt{1-4b_ic_i}}{2c_i}$).
\end{example}

\begin{remark}
    For each $i\in\{1,2,3,4\}$, a \textbf{PR} mesh requires either $a_i=e_i=0$ or $b_i=c_i=0$. However, one should realize that the above example is typical. The approach can be adapted to any other case after a minor change.
\end{remark}
 
\subsubsection{HQ: $S_1, S_2$ only admit valid components of \textbf{Case 2}.}

In such a class both $S_1, S_2$ are half-quadratic so each of them only contains one reducible $\Tilde{g}^{(i)}$. The extension diagram of a valid component of $S_1$ leads to either $x_2\in K_1$ or $x_2\in K_3$ but not both. That is why we call it 'half-quadratic'. Please note that we only need to consider the matchings in which $\Tilde{g}^{(1)}, \Tilde{g}^{(3)}$ are reducible or $\Tilde{g}^{(2)}, \Tilde{g}^{(4)}$ are reducible. This is because when reducible polynomials are adjacent, say $\Tilde{g}^{(1)}, \Tilde{g}^{(4)}$, then write the matching as $(\Tilde{g}^{(4)},\Tilde{g}^{(1)},\Tilde{g}^{(2)},\Tilde{g}^{(3)})$, it goes to \textbf{PR + IR} class.

Example \ref{HQ} gave a way to construct pseudo-planar \textbf{HQ} matchings in which $\Tilde{g}^{(1)}, \Tilde{g}^{(3)}$ are reducible. Clearly, after a rotation, it also works for those in which $\Tilde{g}^{(2)}, \Tilde{g}^{(4)}$ are reducible.

\subsubsection{IR: $S_1, S_2$ only admit valid components of \textbf{Case 3}.}

In such a class all $\Tilde{g}^{(i)}$ are irreducible and $d_M$ has linear factors only. This means in the extension diagram of any valid component we have $x_3\in K_1$. So $\Tilde{g}^{(1)}(x_1,t),\Tilde{g}^{(2)}(t,x_3)\in K_1[t]$ determine the same minimal polynomial of $x_2$ on $K_1$ which means they have two common roots in $K_1(x_2)$. Same thing happens on $(\Tilde{g}^{(3)},\Tilde{g}^{(4)})$ and $x_4$. The name 'involutive-rational' therefore comes from here.

Example \ref{IR} showed how to construct pseudo-planar matchings $M$ in which $S_1, S_2$ admit valid components of \textbf{Case 3}. However, if both $S_1, S_2$ are rational-quadratic, we only need to adjust a few coefficients such that $\Tilde{R}^{(1)},\Tilde{R}^{(2)}$ only coincide on their linear factors so $S_i$ does not have valid components of \textbf{Case 4}.

As for matchings in general type, notice that real components of \textbf{Case 3} only exist when $F_1, F_3\in\{0, \infty\}$ (see Proposition \ref{real-sol}). Suppose $W_1, W_2$ are real valid components of $S_1, S_2$ respectively such that $W_1\times_{\{x_1, x_3\}} W_2$ is an infinite set. According to Theorem \ref{flexequi}, we are allowed to merge the extension diagrams
$$
\scalebox{0.75}{\xymatrix{
    K_1(x_2,x_3) & & K_1(x_4,x_3)\\
    K_1(x_2) \ar@{=}[u]^{\Tilde{g}^{(2)}} & K_1(y_2)=K_1(x_3)\ar@{-}[ul] \ar@{-}[ur] & K_1(x_4) \ar@{=}[u]_{\Tilde{g}^{(3)}}\\
    & K_1\ar@{-}[ul]^{\Tilde{g}^{(1)}} \ar@{=}[u] \ar@{-}[ur]_{\Tilde{g}^{(4)}}& \\
}}   
$$
According to the proof of Corollary \ref{n-is-c}, we have $y_2=kx_1$ or $y_2=k/x_1$. After substitutions $y_4=\frac{F_4+x_{1}}{1-F_4x_{1}}$ and $x_3=\frac{y_2-F_2}{1+F_2y_2}$, the relation between $(y_4,x_3)$ should keep the same format like $(y_2,x_1)$. This immediately shows the following restrictions.
\begin{equation}\label{IR-g}
    \{F_2F_4=k=\pm 1\}\cup\{-F_2/F_4=k=\pm 1\}\cup\{-F_2F_4=k=\pm 1\}\cup\{F_2/F_4=k=\pm 1\}.
\end{equation}

\subsubsection{RQ: $S_1, S_2$ admit valid components of \textbf{Case 3} and \textbf{Case 4}.}

In such a class $\Tilde{R}^{(1)},\Tilde{R}^{(2)}$ have the same factorization: an irreducible and quadratic factor times a squared linear factor. This explains the name 'ration-quadratic'. We only need to consider pseudo-planar type because of the following proposition.

\begin{proposition}\label{PQpseu}
    A matching $M$ must be pseudo-planar if, for $i=1,2$, coupling $S_i$ admits a valid component $W_i$ of \textbf{Case 4} such that 
    $$
    (W_1\cap (\rp)^5)\times_{\{x_1, x_3\}} (W_2\cap (\rp)^5)
    $$
    is an infinite set.
\end{proposition}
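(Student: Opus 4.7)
The plan is to show that every $F_i$ must lie in $\{0,\infty\}$. The strategy is to apply Proposition \ref{real-sol} twice for each of the two couplings $S_1,S_2$: once in the coupling's given orientation and once after reversing its ``direction'' so that Proposition \ref{real-sol} delivers information about the $F$ at the opposite end.

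First I would observe that a component of \textbf{Case 4} can only occur inside an equimodular coupling (by Definition \ref{casedefn} together with table (\ref{table})); hence both $S_1$ and $S_2$ are equimodular and the hypothesis of Proposition \ref{real-sol} is met. Since $W_i\cap(\rp)^5$ is infinite, $S_i$ admits a real component, so the proposition applied directly to $S_1$ (respectively $S_2$) forces $F_1\in\{0,\infty\}$ (respectively $F_3\in\{0,\infty\}$), together with the positivity conditions (\ref{+ratio}).

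To obtain $F_2\in\{0,\infty\}$ I would pass to the equivalent representation $S_1=(g^{(1)},\Tilde{g}^{(2)},H^{(1)},H^{(2)})$ from Proposition \ref{eqcoup} and relabel coordinates via the involution $x_1\leftrightarrow x_3$, $y_1\leftrightarrow y_2$. Under this relabeling, $\Tilde{g}^{(2)}(x_2,x_3)$ takes the position of $\Tilde{g}^{(1)}$ in Proposition \ref{real-sol}; the polynomial $g^{(1)}$ with its coefficients $b_1,c_1$ interchanged (which is exactly the polynomial obtained from $g^{(1)}$ by swapping its two variables) takes the position of $g^{(2)}$; and the parameter now occupying the role of $F_1$ is precisely the original $F_2$. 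Since the zero set and its real locus are invariant under a coordinate relabeling, the renamed coupling still admits a real component of \textbf{Case 4}, and Proposition \ref{real-sol} applied to it yields $F_2\in\{0,\infty\}$. Repeating this reversal on $S_2$ gives $F_4\in\{0,\infty\}$, so all four $F_i$ lie in $\{0,\infty\}$ and $M$ is pseudo-planar.

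The main obstacle is verifying that the reversal actually produces a coupling of the exact form required by Proposition \ref{real-sol}: the new ``$\Tilde{g}^{(1)}$'' must arise as a resultant of the new ``$g^{(1)}$'' against an $H$ of the standard shape in Definition \ref{dfnpoly}, and the new ``$F_1$'' must be identified with the original $F_2$ in a compatible way. The degenerate case where an $F_i$ equals $\infty$ (so that $H^{(i)}$ takes the alternative form $y_ix_{i+1}+1$) requires separate bookkeeping but follows the same pattern.
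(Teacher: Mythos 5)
Your first step is fine and agrees with the paper: a valid component of \textbf{Case 4} forces $S_i$ to be equimodular, realness of $W_i$ then gives $F_1,F_3\in\{0,\infty\}$ by Proposition \ref{real-sol}. The gap is in how you try to reach $F_2$ (and $F_4$). In the coupling $S_1$, the chain of relations is $x_1\xrightarrow{g^{(1)}}y_1\xrightarrow{H^{(1)}(F_1)}x_2\xrightarrow{g^{(2)}}y_2\xrightarrow{H^{(2)}(F_2)}x_3$: the parameter that Proposition \ref{real-sol} constrains is the M\"obius gap sitting \emph{between the two Bricard relations}, which is $F_1$. Reading the chain backwards from $x_3$ does not move $F_2$ into that position; the reversed chain is M\"obius($F_2$), Bricard($Q_2$), M\"obius($F_1$), Bricard($Q_1$), so the gap between the two quads is still $F_1$, and $F_2$ now appears as a change of variable on the parameter end. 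Hence your relabelled coupling, if it can be put in standard form at all, again yields only $F_1\in\{0,\infty\}$. Indeed no analysis of $S_1$ in isolation can ever constrain $F_2$: as noted in Example \ref{skeleton}, the classes of the components are independent of $H^{(2)}$, and since $F_2\in\rp$ the map $y_2\mapsto x_3$ is a real M\"obius transformation, so realness of components is independent of $F_2$ as well. One could replace $F_2$ by any real value without changing whether $S_1$ has a real \textbf{Case 4} component, so your argument would ``prove'' too much.

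This is also visible in the fact that your proposal never uses the hypothesis that the joint fiber product $(W_1\cap(\rp)^5)\times_{\{x_1,x_3\}}(W_2\cap(\rp)^5)$ is infinite, only that each factor is; but $F_2$ and $F_4$ are precisely the gaps that glue $S_1$ to $S_2$, so they can only be constrained by a condition coupling the two. The paper's proof supplies exactly this: the infinite real fiber product lets one merge the extension diagrams via Theorem \ref{flexequi}, Lemma \ref{samedsc-g} then shows $K_1(x_2)=K_1(x_3)=K_1(x_4)$ so that the cross-couplings $(\Tilde{g}^{(4)},\Tilde{g}^{(1)},H^{(4)},H^{(1)})$ and (by Theorem \ref{symext}) $(\Tilde{g}^{(2)},\Tilde{g}^{(3)},H^{(2)},H^{(3)})$ are equimodular with real components, and Proposition \ref{real-sol} applied to \emph{those} couplings, whose internal gaps are $F_4$ and $F_2$, finishes the proof. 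To repair your argument you would need to replace the ``reversal of $S_1$'' by this cross-coupling step.
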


\begin{proof}
    Clearly, $W_1, W_2$ are real components of $S_1, S_2$ respectively, hence $F_1, F_3\in \{0, \infty\}$ by Proposition \ref{real-sol}. Moreover, due to Theorem \ref{flexequi}, we can merge the extension diagrams
    \begin{equation}\label{PQmerge}
        \scalebox{0.75}{\xymatrix{
        K_1(x_2) \ar@{=}[r] & K_1(x_3) & K_1(x_4) \ar@{=}[l]\\
        & K_1\ar@{-}[ul]^{\Tilde{g}^{(1)}} \ar@{-}[u] \ar@{-}[ur]_{\Tilde{g}^{(4)}}& \\
    }} 
    \end{equation}
    where $K_1(x_2)=K_1(x_3)=K_1(x_4)$ is based on Lemma \ref{samedsc-g}. The same lemma also tells coupling $(\Tilde{g}^{(4)},\Tilde{g}^{(1)},H^{(4)},H^{(1)})$ admits a real component of \textbf{Case 3} or \textbf{Case 4}. And again, by Proposition \ref{real-sol} we have $F_4\in \{0, \infty\}$. Finally, thanks to Theorem \ref{symext} we can use the same argument to conclude $F_2\in \{0, \infty\}$. Thus the whole matching must be pseudo-planar.
\end{proof}

The construction is easy. First, we use Example \ref{IR} to get some candidates so $\Tilde{R}^{(1)},\Tilde{R}^{(2)}$ share at least a linear factor. Then use Corollary \ref{even-term} to get the quadratic factors and make the corresponding coefficients proportional.

\subsubsection{PQ: $S_1, S_2$ only admit valid components of \textbf{Case 4}.}

In such a class we can always get a merged diagram (\ref{PQmerge}) in which
$$
K_1(x_2)=K_1(x_3)=K_1(x_4)=K_1(x_2,x_3,x_4)\text{ and } [K_1(x_2):K_1]=2.
$$
The name 'purely-quadratic' is quite straightforward.

The construction is basically as suggested in Section \ref{Generalap}. However, given $M$ is pseudo-planar (Proposition \ref{PQpseu}), and according to Corollary \ref{even-term}, the polynomial system is reduced to four equations which are derived from proportional coefficients $\alpha_{ij}$. But do not forget to remove the solutions of \textbf{RQ} class when both $S_1, S_2$ are wanted to be rational-quadratic.

\subsubsection{IQ: $S_1, S_2$ only admit valid components of \textbf{Case 5}.}

Similar to \textbf{IR} matchings, $\Tilde{g}^{(1)}, \Tilde{g}^{(2)}$ determine the same minimal polynomial of $x_2$ on $K_1(x_3)$ and $[K_1(x_3),K_1]=2$. So it is called 'involutive-quadratic'. This class belongs to the so-called \textbf{orthodiagonal involutive type} which is well studied by Aikyn in \cite{Alisher}. They gave a systematic construction for such matchings and found the equivalent condition for the valid components being real. However, we have an important result for pseudo-planar \textbf{IQ} matchings.

\begin{proposition}\label{IQpseu}
    In a pseudo-planar matching $M$ with couplings $S_1$ and $S_2$, $S_1$ is involutive-quadratic if and only if $S_2$ is involutive-quadratic.
\end{proposition}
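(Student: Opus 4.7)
The plan rests on a parity observation. In pseudo-planar type every $\tilde g^{(i)}(x_i,x_{i+1})$ has only monomials of even total degree (immediate for $F_i=0$ and preserved for $F_i=\infty$ by the coefficient swap at the start of Section \ref{pseudo}), so a short Sylvester-determinant computation shows that both $\tilde R^{(1)}$ and $\tilde R^{(2)}$ are invariant under the involution $(x_1,x_3)\mapsto(-x_1,-x_3)$. Consequently every irreducible factor of $\tilde R^{(j)}$ is either purely even, purely odd, or appears in a pair with its image under this involution.

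Assume $S_1$ is involutive-quadratic. By Lemma \ref{samemini} (applied to the effective coefficients after the swap, if needed), $\tilde R^{(1)}=c\cdot r_1^2$ with $r_1$ irreducible of bidegree $(2,2)$ and, in the base case $F_1=F_2=0$,
$$r_1(x_1,x_3)=a_2 x_1 x_3^2-(a_1 x_1^2+c_1)x_3+b_2 x_1.$$
Every monomial of $r_1$ has odd total degree, so $r_1$ is purely odd. The matching criterion (Theorem \ref{flexequi}) forces $r_1\mid \tilde R^{(2)}$, and I then exhaust the seven classes of $S_2$ from Theorem \ref{factor}. The quartic class is excluded because $\tilde R^{(2)}$ would be irreducible of bidegree $(4,4)$ and cannot equal $r_1$ of bidegree $(2,2)$. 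The purely-rational and involutive-rational classes are excluded because all irreducible factors of $\tilde R^{(2)}$ have bidegree $(1,1)$, whereas $r_1$ is irreducible of bidegree $(2,2)$ and cannot be a product of linear-in-each-variable pieces. In the half-quadratic, rational-quadratic, and purely-quadratic classes every irreducible bidegree $(2,2)$ factor of $\tilde R^{(2)}$ is purely even: in the equimodular cases this is transparent from the explicit form $(\alpha_{22}x_1^2+\alpha_{02})x_3^2+\alpha_{11}x_1 x_3+(\alpha_{20}x_1^2+\alpha_{00})$ in Corollary \ref{even-term}, whose monomials all have even total degree; in the half-quadratic case it follows by substituting each linear factor of the reducible $\tilde g^{(i)}$ into the other $\tilde g^{(j)}$ and verifying that the resulting bivariate polynomial contains only even-total-degree monomials. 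A purely odd polynomial cannot equal a purely even one, so $r_1$ cannot divide $\tilde R^{(2)}$ in these three cases. Hence $S_2$ must be involutive-quadratic.

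The converse follows by symmetry: relabel the matching as $(\tilde g^{(3)},\tilde g^{(4)},\tilde g^{(1)},\tilde g^{(2)})$ and apply the same argument. The key conceptual ingredient is the parity invariant, which distinguishes involutive-quadratic (purely odd) factors from the purely even bidegree $(2,2)$ factors of the half-quadratic and equimodular classes. The main technical obstacle is the book-keeping for the coefficient swap when some $F_i=\infty$, so that Lemma \ref{samemini} and Corollary \ref{even-term} (both formulated for $F_1=0$) are applied to the correct effective coefficients while the odd/even monomial structure of $r_1$ and of the candidate factors of $\tilde R^{(2)}$ is preserved.
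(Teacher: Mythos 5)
Your proof is correct and follows essentially the same route as the paper's: both arguments rest on Lemma \ref{samemini} showing that the involutive-quadratic resultant factor $a_2x_1x_3^2-(a_1x_1^2+c_1)x_3+b_2x_1$ contains only odd-total-degree monomials, while the quadratic factors arising in every other pseudo-planar class (Example \ref{HQ}, Corollary \ref{even-term}) contain only even-total-degree monomials, so the common factor forced by Theorem \ref{flexequi} pins down the class of the other coupling. Your version merely makes explicit the degree-mismatch exclusions of the linear-factor and quartic classes that the paper leaves implicit in the phrase ``if quadratic.''
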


\begin{proof}
    Suppose $S_i$ is involutive-quadratic so its valid component must be of \textbf{Case 5}. From Lemma \ref{samemini} we know that the irreducible factor of $\Tilde{R}^{(i)}$ is quadratic in both $x_1, x_3$ and only contains odd-degree terms. On the contrary, going through pseudo-planar couplings of other classes, the irreducible factors of the resultant, if quadratic, only contain even-degree terms (see Example \ref{HQ} and Corollary \ref{even-term}). Hence the other coupling must be involutive-quadratic as well.
\end{proof}

\subsubsection{Q: $S_1, S_2$ only admit valid components of \textbf{Case 6}.}

This is the irreducible class for $\Tilde{R}^{(1)},\Tilde{R}^{(2)}$, so in order to form a matching, we must have $\Tilde{R}^{(1)}=c\Tilde{R}^{(2)}$ for a constant $c$. We call it 'quartic' simply because all components of the couplings lead to $[K_1(x_3):K_1]=4$.

Again, the construction is as suggested in Section \ref{Generalap} and Example \ref{skeleton} (Reflection) shows the solution exists. 
In addition, it is easy to verify that by relabeling the quads in Example \ref{physical}, e.g. considering couplings $(\Tilde{g}^{(2)},\Tilde{g}^{(3)},H^{(2)},H^{(3)})$ and $(\Tilde{g}^{(4)},\Tilde{g}^{(1)},H^{(4)},H^{(1)})$, one may obtain a non-reflection \textbf{Q} mesh.

Hellmuth Stachel conjectured a long time ago that there do not exist irreducible matchings. In our language, in a flexible mesh which determines a matching $M$, $\text{Res}(\Tilde{g}^{(1)},\Tilde{g}^{(2)}; x_2)$ is irreducible only if $\text{Res}(\Tilde{g}^{(2)},\Tilde{g}^{(3)}; x_3)$ is reducible. Or $[K_1(x_3):K_1]=4\Rightarrow[K_2(x_4):K_2]\leq 2$ for short. The planar version\footnote{All forming panels in Fig.~\ref{mesh} left are planar.} of the conjecture has been proved by Erofeev in \cite{Erofeev}. If the conjecture is also true for non-planar type, the construction for \textbf{Q} matchings can be transformed into other classes after a rotation.

\subsection{Hybrid classes}\label{hybrid}

\subsubsection{PR + IR: $S_1, S_2$ admit valid components of \textbf{Case 1} and \textbf{Case 3} respectively, or vice versa.}

The construction for this class is just a byproduct of Example \ref{construction-PR}. Symmetrically, we assume $S_1$ admits a valid component of \textbf{Case 3} hence $S_2$ is purely-rational. In addition, we may assume $F_1\in \{0, \infty\}$ due to Proposition \ref{real-sol}.

\begin{example}\label{PR+IR}
    We construct matchings $M$ in which $F_1=0$ and the corresponding couplings $S_1, S_2$ admit valid components $W_1, W_2$ of \textbf{Case 3} and \textbf{Case 1} respectively. i.e. $\Tilde{g}^{(1)},\Tilde{g}^{(2)}$ are irreducible and $\Tilde{g}^{(3)},\Tilde{g}^{(4)}$ are reducible. According to Theorem \ref{flexequi}, the merged extension diagram of $W_1$ and $W_2$ is
    $$
    \scalebox{0.75}{\xymatrix{
         & K_1(x_2,y_2) &\\
        K_1(x_2) \ar@{=}[ur]^{g^{(2)}} & K_1(x_3)\ar@{-}[u] & K_1(x_4) \ar@{=}[l]\\
        & K_1\ar@{-}[ul]^{\Tilde{g}^{(1)}} \ar@{=}[u] \ar@{=}[ur]& \\
    }} 
    $$
    Since $F_1=0$, from Corollary \ref{n-is-c} and its proof, we know that in the diagram it is either $y_2=kx_1$ or $y_2=k/x_1$, depending on whether the coefficients of $g^{(1)}, g^{(2)}$ satisfy
    $$
    \left\{\frac{a_1}{a_2}=\frac{b_1}{c_2}=\frac{b_2}{c_1}=\frac{e_2}{e_1}=k\right\} \text{ or } \left\{\frac{a_1}{b_2}=\frac{b_1}{e_2}=\frac{a_2}{c_1}=\frac{c_2}{e_1}=\frac{1}{k}\right\}
    $$
    where $k\neq 0$ is a constant. In either of the above, the relation between $(x_1,x_3)$ is a M\"{o}bius transformation $x_3=\frac{px_1+q}{rx_1+s}$ where $p,q,r,s$ are parameterized by $k$ and $F_2$. The rest construction for $\Tilde{g}^{(3)},\Tilde{g}^{(4)}$ goes the same way as in Theorem \ref{PR} or Example \ref{construction-PR}, i.e. 
    $$
    N_4N_3\left(\begin{array}{cc}
        p & q \\
        r & s \\
        \end{array}\right)
    $$
    must be a scalar matrix.
\end{example}

\subsubsection{HQ + IQ: $S_1, S_2$ admit valid components of \textbf{Case 2} and \textbf{Case 5} respectively, or vice versa.}

The construction is as suggested in Section \ref{Generalap}. Nevertheless, we want to mention that, it is easy to realize one of $S_i$ is half-quadratic and the other is involutive-quadratic hence there is only one $\Tilde{g}^{(i)}$ reducible. We may temporarily assume it is $\Tilde{g}^{(2)}$ so there are valid components $W_1, W_2$ of $S_1, S_2$ respectively such that $W_1\times_{\{x_1, x_3\}} W_2$ is an infinite set. The following diagrams are from the same merged diagram of $W_1$ and $W_2$ but with respect to different parameters $x_1, x_4$ respectively.
$$
\scalebox{0.75}{\xymatrix{
    K_1(x_2,x_3) & & K_1(x_4,x_3)\\
    K_1(x_2) \ar@{=}[u] & K_1(x_3)\ar@{=}[ul] \ar@{=}[l] \ar@{-}[ur] & K_1(x_4) \ar@{-}[u]_{\Tilde{g}^{(3)}}\\
    & K_1\ar@{-}[ul]^{\Tilde{g}^{(1)}} \ar@{-}[u] \ar@{-}[ur]_{\Tilde{g}^{(4)}}& \\
}   \,\,\,\,\,\,\,\,\,
\xymatrix{
    K_4(x_1,x_2) & & K_4(x_2,x_3)\\
    K_4(x_1) \ar@{-}[u]^{\Tilde{g}^{(1)}} & K_4(x_2)\ar@{=}[ur] \ar@{=}[r] \ar@{-}[ul] & K_4(x_3) \ar@{=}[u]\\
    & K_4\ar@{-}[ul]^{\Tilde{g}^{(4)}} \ar@{-}[u] \ar@{-}[ur]_{\Tilde{g}^{(3)}}& \\
}}
$$
According to Definition \ref{casedefn} we have $K_1(x_2)=K_1(x_3)\neq K_1(x_4)$ and $[K_4(x_2):K_4]=[K_4(x_3):K_4]=2$. So by Lemma \ref{samedsc-g}, coupling $(\Tilde{g}^{(4)},\Tilde{g}^{(1)},H^{(4)},H^{(1)})$, like $S_2$, also admits a valid component of \textbf{Case 5}. This means no matter how we label the polynomials $\Tilde{g}^{(i)}$, an \textbf{HQ + IQ} matching is always an \textbf{HQ + IQ} matching. Therefore, without loss of generality, we can always assume $\Tilde{g}^{(2)}$ is reducible and both couplings $S_2$ and $(\Tilde{g}^{(4)},\Tilde{g}^{(1)},H^{(4)},H^{(1)})$ are involutive-quadratic. Hence by Theorem \ref{main-ps} or \ref{main-g} we have 
$$
\left\{\frac{a_4}{b_4}=\frac{c_4}{e_4}=\frac{a_1}{c_1}=\frac{b_1}{e_1}(=-1), \frac{a_3}{b_3}=\frac{c_3}{e_3}=\frac{a_4}{c_4}=\frac{b_4}{e_4}(=-1)\right\}.
$$
Adding this to the construction process will considerably reduce the number of parameters.
The solution exists according to the following example.

\begin{example}
    We first set 
    $$
    b_1=-a_1\neq 0, e_1=-c_1\neq 0, a_2=e_2=0, b_2=-c_2-1, b_2c_2\neq 0, F_1=1, F_2=0
    $$
    plus the inequalities in Definition \ref{dfnpoly}. Simple calculation shows $\Tilde{R}^{(1)}$ has an irreducible factor
    $$
    x_1x_3^2-4(a_1x_1^2+c_1)x_3^2-x_1
    $$
    which is also wanted to be a factor of $\Tilde{R}^{(2)}$. So finally, let $F_3=F_4=0$ and use Lemma \ref{samemini} to construct $g^{(3)}$ and $g^{(4)}$.
\end{example}

\subsubsection{HQ + PQ: $S_1, S_2$ admit valid components of \textbf{Case 2} and \textbf{Case 4} respectively, or vice versa.}

As we did for the last class, a similar argument can show that, no matter how we label the polynomials $\Tilde{g}^{(i)}$, an \textbf{HQ + PQ} matching is always an \textbf{HQ + PQ} matching. Without loss of generality, we can always assume $\Tilde{g}^{(2)}$ is reducible and both couplings $S_2$ and $(\Tilde{g}^{(4)},\Tilde{g}^{(1)},H^{(4)},H^{(1)})$ admit valid components of \textbf{Case 4}. And due to Proposition \ref{real-sol}, we may further assume $F_3, F_4\in \{0,\infty\}$. We first see an example of pseudo-planar type.

\begin{example}\label{HQ+PQ}
    We construct \textbf{HQ + PQ} matchings in which all $F_i=0$ and $g^{(2)}$ is reducible. Since $S_2$ admits a valid component of \textbf{Case 4}, from Theorem \ref{main-ps} $g^{(3)}$ and $g^{(4)}$ could be constructed by the first one of systems (\ref{eq-c4}). Hence by Corollary \ref{even-term} we get an irreducible factor of $\Tilde{R}^{(2)}$ explicitly
    $$
    (\alpha_{22} x_1^2 + \alpha_{02}) x_3^2 + \alpha_{11} x_1 x_3 + (\alpha_{20} x_1^2+ \alpha_{00}).
    $$
    If necessary, we need to adjust a few coefficients to make sure $\alpha_{11} \neq 0$. On the other hand, by Lemma \ref{rational}, we arbitrarily set $g^{(2)}$ reducible by letting $a_2=e_2=0$ or $b_2=c_2=0$ so $g^{(2)}$ has a factor in the form $y_2-kx_2$ or $x_2y_2-k$. We first consider the case $(y_2-kx_2)|g^{(2)}$. In the merged extension diagram of valid components of $S_1, S_2$, the minimal polynomial of $x_3$ on $K_1$ can be induced by substitutions
    $$
    g^{(1)}(x_1,y_1)=g^{(1)}(x_1,x_2)=g^{(1)}\left(x_1,\frac{y_2}{k}\right)=g^{(1)}\left(x_1,\frac{x_3}{k}\right)
    $$
    so $\Tilde{R}^{(1)}$ has an irreducible factor
    $$
    (a_1 x_1^2 + c_1 ) x_3^2 + k x_1 x_3 + k^2(b_1 x_1^2+ e_1)
    $$
    which should be the same factor of $\Tilde{R}^{(2)}$, i.e. $g^{(1)}$ is determined by 
    $$
    \frac{a_1}{\alpha_{22}}=\frac{c_1}{\alpha_{02}}=\frac{k}{\alpha_{11}}=\frac{k^2 b_1}{\alpha_{20}}=\frac{k^2e_1}{\alpha_{00}}
    $$
    where $\alpha_{11}\neq 0$, as required, is mandatory. The case for $(x_2y_2-k)|g^{(2)}$ is similar.
\end{example}

Clearly, there is no difficulty in applying the above method to construct all pseudo-planar \textbf{HQ + PQ} matchings since all polynomials remain in the same form. In fact, Example \ref{HQ+PQ} is even typical for general type. Notice that we have restricted $F_3, F_4\in \{0,\infty\}$ so for general type we only allow $F_1, F_2\notin \{0,\infty\}$. Likewise, after $g^{(1)}(x_1,y_1)$ going through three steps of substitutions
$$
\left(y_1=\frac{x_2+F_1}{1-F_1x_2}\right) \rightarrow \left(x_2=\frac{y_2}{k} \text{ or } x_2=\frac{k}{y_2} \right) \rightarrow \left(y_2=\frac{x_3+F_2}{1-F_2x_3}\right),
$$
we still get an irreducible polynomial in the form 
$$
(\alpha_{22} x_1^2 + \alpha_{02}) x_3^2 + \alpha_{11} x_1 x_3 + (\alpha_{20} x_1^2+ \alpha_{00})
$$
that can match the irreducible factor of $\Tilde{R}^{(2)}$ without odd-degree terms. This leads to the same consequence (\ref{IR-g}) as in \textbf{IR} class (replace $F_4$ by $F_1$). The rest work goes the same as in Example \ref{HQ+PQ}.

\subsubsection{PQ + IQ: $S_1, S_2$ admit valid components of \textbf{Case 4} and \textbf{Case 5} respectively, or vice versa.}

The construction is as suggested in Section \ref{Generalap}. The solution exists according to the following example.

\begin{example}
Suppose $S_1, S_2$ admit valid components of \textbf{Case 4} and \textbf{Case 5} respectively hence we may assume $F_1=0$ due to Proposition \ref{real-sol}. Let $F_2=0$ and according to Corollary \ref{even-term} we have an irreducible factor of $\Tilde{R}^{(1)}$
$$
(\alpha_{22} x_1^2 + \alpha_{02}) x_3^2 + \alpha_{11} x_1 x_3 + (\alpha_{20} x_1^2+ \alpha_{00}).
$$
On the other hand, set $F_3=\pm 1, F_4=0$ we get an irreducible factor of $\Tilde{R}^{(2)}$ by Lemma \ref{samemini-g}
$$
4a_4(a_3x_3^2 + c_3) x_1^2 +x_3 x_1 + 4b_4(a_3x_3^2+c_3).
$$
Thus the coefficients of $g^{(1)},g^{(2)}$ can be determined by equation (\ref{repara}) and
$$
\frac{4a_3a_4}{\alpha_{22}}=\frac{4a_3b_4}{\alpha_{02}}=\frac{1}{\alpha_{11}}=\frac{4c_3a_4}{\alpha_{20}}=\frac{4c_3b_4}{\alpha_{00}}.
$$
\end{example}

We call back to Example \ref{physical} for a physical model.

\section{Conclusion and future work}

In this article, we developed an algebraic approach to classify the flexible Kokotsakis polyhedra without (anti)deltoids, also as known as non-singular $3\times 3$ flexible meshes. Among all 11 different classes, most of them are provided with systematic constructions, and for the rest, partial examples are given to demonstrate no empty class. 

Obviously, the future work is to continue on singular ones which contain (anti)deltoids. Based on the results we obtained so far, our algebraic method still works but in an asymmetric way, unlike non-singular cases. On the other hand, the singularity may occur in many different ways hence a lot of reduction efforts are needed to provide a classification as concise as possible.

\section*{Acknowledgments}

The authors are grateful to Helmut Pottmann for his comprehensive introduction to the peer works and to Dmitry A.~Lyakhov for his feedback on this manuscript. This work has been supported by KAUST baseline funding.

\newpage

\bibliographystyle{unsrt}
\bibliography{arXiv}

\begin{thebibliography}{10}

\bibitem{hoberman2005}
Chuck Hoberman.
\newblock Transformation in architecture and design.
\newblock In {\em Transportable Environmens 3}, pages 70--79. Taylor \&\
  Francis, 2006.

\bibitem{dieleman2020}
Peter Dieleman, Niek Vasmel, Scott Waitukaitis, and Martin van Hecke.
\newblock Jigsaw puzzle design of pluripotent origami.
\newblock {\em Nature Physics}, 16(1):63--68, 2020.

\bibitem{Callens2018}
Sebastien Callens and Amir Zadpoor.
\newblock From flat sheets to curved geometries: Origami and kirigami
  approaches.
\newblock {\em Materials Today}, 21(3):241--264, 2018.

\bibitem{metamaterials-survey}
Zirui Zhai, Lingling Wu, and Hanqing Jiang.
\newblock Mechanical metamaterials based on origami and kirigami.
\newblock {\em Appl. Phys. Rev.}, 8, 2021.

\bibitem{ciang-2019-cf}
Caigui Jiang, Klara Mundilova, Florian Rist, Johannes Wallner, and Helmut
  Pottmann.
\newblock Curve-pleated structures.
\newblock {\em ACM Transactions on Graphics}, 38(6):169:1--13, 2019.

\bibitem{Dudte2016}
Levi~H Dudte, Etienne Vouga, Tomohiro Tachi, and L~Mahadevan.
\newblock Programming curvature using origami tessellations.
\newblock {\em Nature materials}, 15(5):583, 2016.

\bibitem{konakovic2016}
Mina Konakovi\'{c}, Keenan Crane, Bailin Deng, Sofien Bouaziz, Daniel Piker,
  and Mark Pauly.
\newblock Beyond developable: Computational design and fabrication with auxetic
  materials.
\newblock {\em ACM Transactions on Graphics}, 35(4):89:1--11, 2016.

\bibitem{konakovic2018}
Mina Konakovi\'{c}-Lukovi\'{c}, Julian Panetta, Keenan Crane, and Mark Pauly.
\newblock Rapid deployment of curved surfaces via programmable auxetics.
\newblock {\em ACM Transactions on Graphics}, 37(4):106:1--13, 2018.

\bibitem{jamali2022}
Amin Jamalimehr, Morad Mirzajanzadeh, Abdolhamid Akbarzadeh, and Damiano
  Pasini.
\newblock Rigidly flat-foldable class of lockable origami-inspired
  metamaterials with topological stiff states.
\newblock {\em Nature Commun.}, 13, 2022.

\bibitem{xiao2022}
Kai Xiao, Zihe Liang, Bihui Zou, Xiang Zhou, and Jaehyung Ju.
\newblock Inverse design of {3D} reconfigurable curvilinear modular origami
  structures using geometric and topological reconstructions.
\newblock {\em Nature Commun.}, 13, 2022.

\bibitem{flectofin}
Julian Lienhard, Simon Schleicher, Simon Poppinga, Tom Masselter, Markuks
  Milwich, Thomas Speck, and Jan Knippers.
\newblock Flectofin: a hingeless flapping mechanism inspired by nature.
\newblock {\em Bioinspir. Biomim.}, 6, 2011.

\bibitem{strelitzia12}
Tom Masselter, Simon Poppinga, Julian Lienhard, Simon Schleicher, and Thomas
  Speck.
\newblock The flower of {S}trelitzia reginae as concept generator for the
  development of a technical deformation system for architectural purposes.
\newblock In {\em Proc. 7th. Plant Biomechanics Int. Conf.}, pages 389--392.
  INRIA, 2012.

\bibitem{Pillwein2020}
Stefan Pillwein, Kurt Leimer, Michael Birsak, and Przemyslaw Musialski.
\newblock On elastic geodesic grids and their planar to spatial deployment.
\newblock {\em ACM Transactions on Graphics}, 39(4):12, jul 2020.

\bibitem{Pillwein2020a}
Stefan Pillwein, Johanna K{\"u}bert, Florian Rist, and Przemyslaw Musialski.
\newblock Design and fabrication of elastic geodesic grid structures.
\newblock {\em SCF '20: Proceedings of the ACM Symposium on Computational
  Fabric ation}, page~11, nov 2020.

\bibitem{soriano2019g}
Enrique Soriano, Ramon Sastre, and Dionis Boixader.
\newblock G-shells: Flat collapsible geodesic mechanisms for gridshells.
\newblock In {\em Proceedings of IASS Annual Symposia}, volume 2019, pages
  1--8. International Association for Shell and Spatial Structures (IASS),
  2019.

\bibitem{panetta2019x}
Julian Panetta, MINA Konakovi{\'c}-Lukovi{\'c}, Florin Isvoranu, Etienne
  Bouleau, and Mark Pauly.
\newblock X-shells: A new class of deployable beam structures.
\newblock {\em ACM Transactions on Graphics (TOG)}, 38(4):1--15, 2019.

\bibitem{guseinov2017}
Ruslan Guseinov, Eder Miguel, and Bernd Bickel.
\newblock Curveups: Shaping objects from flat plates with tension-actuated
  curvature.
\newblock {\em ACM Transactions on Graphics}, 36(4):64:1--12, 2017.

\bibitem{malomo2018}
Luigi Malomo, Jes\'{u}s P\'{e}rez, Emmanuel Iarussi, Nico Pietroni, Eder
  Miguel, Paolo Cignoni, and Bernd Bickel.
\newblock Flexmaps: Computational design of flat flexible shells for shaping 3d
  objects.
\newblock {\em ACM Transactions on Graphics}, 37(6):231:1--14, 2018.

\bibitem{Schief2008}
Wolfgang Schief, Alexander Bobenko, and Tim Hoffmann.
\newblock On the integrability of infinitesimal and finite deformations of
  polyhedral surfaces.
\newblock In A.~Bobenko et~al., editors, {\em Discrete differential geometry},
  volume~38 of {\em Oberwolfach Seminars}, pages 67--93. Springer, 2008.

\bibitem{kokotsakis33}
Antonios Kokotsakis.
\newblock \"{U}ber bewegliche {P}olyeder.
\newblock {\em Math. Ann.}, 107:627--647, 1933.

\bibitem{stachel2010}
Hellmuth Stachel.
\newblock A kinematic approach to {K}okotsakis meshes.
\newblock {\em Comp. Aided Geom. Design}, 27:428--437, 2010.

\bibitem{Tac09a}
Tomohiro Tachi.
\newblock Generalization of rigid foldable quadrilateral mesh origami.
\newblock {\em J. Int. Ass. Shell \&\ Spatial Structures}, 50:173--179, 2009.

\bibitem{tachi-2011-onedof}
Tomohiro Tachi and Gregory Epps.
\newblock Designing one-{DOF} mechanisms for architecture by rationalizing
  curved folding.
\newblock In Y.~Ikeda, editor, {\em Proc. ALGODE Symposium}, page 14 pp. Arch.
  Institute Japan, Tokyo, 2011.
\newblock CD ROM.

\bibitem{tachi-2013-composite}
Tomohiro Tachi.
\newblock Composite rigid-foldable curved origami structure.
\newblock In F.~Escrig and J.~Sanchez, editors, {\em Proc. 1st Transformables
  Conf.}, page 6 pp. Starbooks, Sevilla, 2013.

\bibitem{izmestiev-2017}
Ivan Izmestiev.
\newblock Classification of flexible {K}okotsakis polyhedra with quadrangular
  base.
\newblock {\em Int. Math. Res. Not.}, (3):715--808, 2017.

\bibitem{sauer:1970}
Robert Sauer.
\newblock {\em Differenzengeometrie}.
\newblock Springer, 1970.

\bibitem{voss1888uber}
Aurel Voss.
\newblock {\"U}ber diejenigen {F}l{\"a}chen, auf denen zwei {S}charen
  geod{\"a}tischer {L}inien ein conjugirtes {S}ystem bilden.
\newblock {\em Sitzungsber. Bayer. Akad. Wiss., math.-naturw. Klasse}, pages
  95--102, 1888.

\bibitem{sauer-graf}
Robert Sauer and Heinrich Graf.
\newblock {\"U}ber {F}l{\"a}chenverbiegung in {A}nalogie zur {V}erknickung
  offener {F}acettenflache.
\newblock {\em Math. Ann.}, 105:499--535, 1931.

\bibitem{t-nets-IASS}
Kiumars Sharifmoghaddam, Georg Nawratil, Arvin Rasoulzadeh, and Jonas
  Tervooren.
\newblock Using flexible trapezoidal quad-surfaces for transformable design.
\newblock In {\em Proceedings of IASS Annual Symposia}, 2020/21.

\bibitem{miura-1980}
Koryo Miura.
\newblock Method of packaging and deployment of large membranes in space.
\newblock In {\em 31st Congress Int'l Astronautical Federation (Tokyo 1980)}.
  1980.

\bibitem{nawratil-t-nets2023}
Kiumars Sharifmoghaddam, Rupert Maleczek, and Georg Nawratil.
\newblock Generalizing rigid-foldable tubular structures of {T}-hedral type.
\newblock {\em Mechanics Research Communications}, 2023.
\newblock to appear.

\bibitem{izmestiev-t-nets}
Ivan Izmestiev, Arvin Rasoulzadeh, and Jonas Tervooren.
\newblock Isometric deformations of discrete and smooth {T}-surfaces.
\newblock {\em arXiv:2302.08925}, 2023.

\bibitem{nawratil-2022}
Georg Nawratil.
\newblock Generalizing continuous flexible {K}okotsakis belts of the isogonal
  type.
\newblock {\em International Conference on Geometry and Graphics}, pages
  115--126, 2022.

\bibitem{bricard1897}
Raoul Bricard.
\newblock M{\'e}moire sur la th{\'e}orie de l'octa{\`e}dre articul{\'e}.
\newblock {\em Journal de Math{\'e}matiques pures et appliqu{\'e}es},
  3:113--148, 1897.

\bibitem{Alisher}
Alisher Aikyn, Yang Liu, Dmitry~A. Lyakhov, Florian Rist, Helmut Pottmann, and
  Dominik~L. Michels.
\newblock Flexible kokotsakis meshes with skew faces: Generalization of the
  orthodiagonal involutive type polyhedra.
\newblock {\em Computer-Aided Design}, 168, 2024.

\bibitem{Erofeev}
Ivan Erofeev and Grigory Ivanov.
\newblock Orthodiagonal anti-involutive kokotsakis polyhedra.
\newblock {\em Mechanism and Machine Theory}, 146, 2020.

\end{thebibliography}

\newpage
\appendix
\section{Definition of the dihedral angles}\label{dfnang}
Following Fig.~\ref{mesh} right, we need to give a proper definition of dihedral angles $\alpha_i'$. To do that, we are about to relocate the starting points of all vectors to the center of a unit sphere. Let us consider a planar mesh that is partially depicted in Fig.~\ref{illus} left.
\begin{figure}[t]
    \centering
    \begin{overpic}[width=0.6\textwidth]{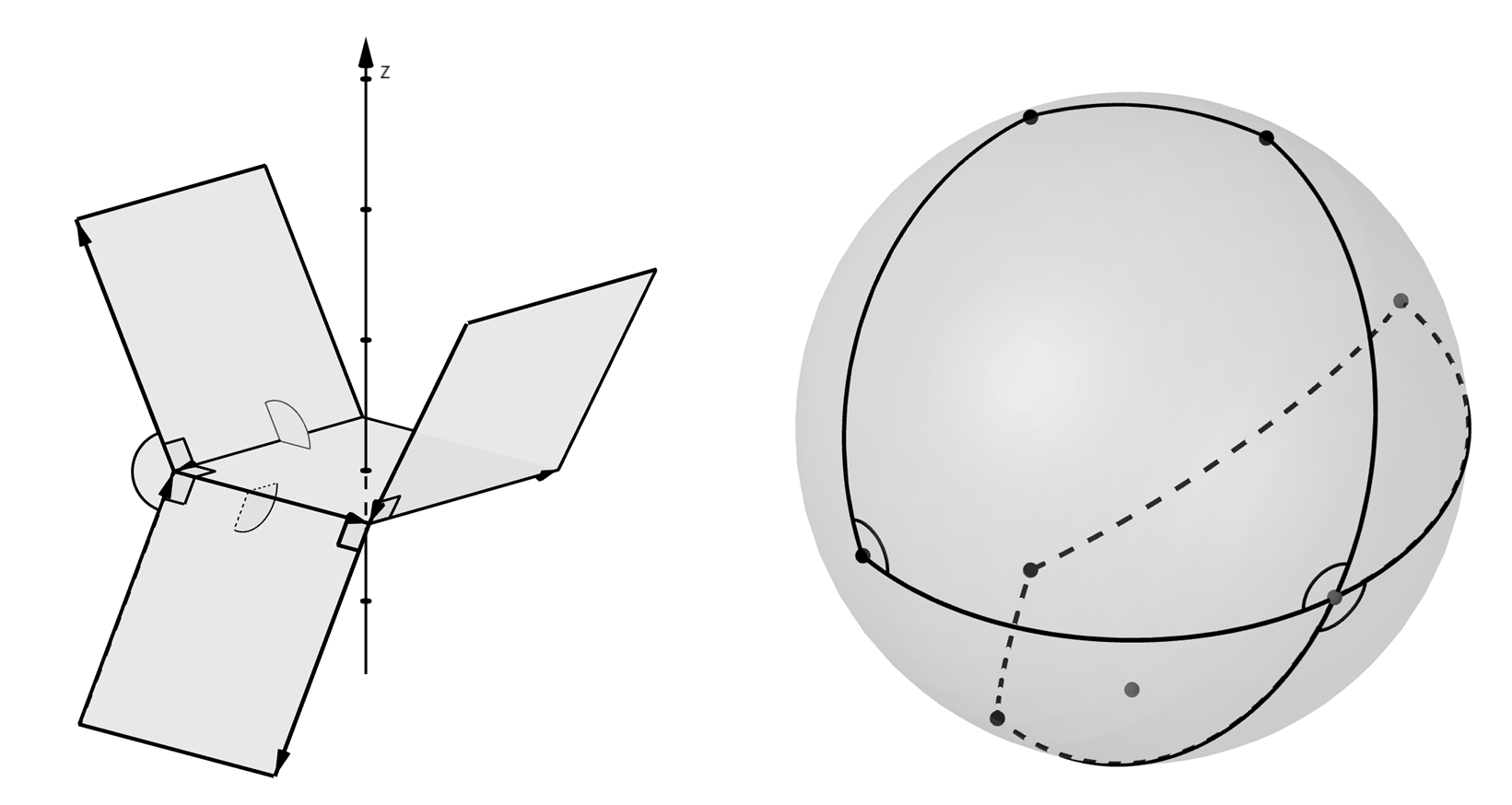}
    \put(14.5,21.2){\footnotesize\contour{white}{$-\lambda_1'$}}
    \put(19,15){\footnotesize\contour{white}{$\gamma_2'$}}
    \put(11,16.5){\footnotesize\contour{white}{$\delta_1'$}}
    \put(11,25.3){\footnotesize\contour{white}{$\gamma_1'$}}
    \put(27,21){\footnotesize\contour{white}{$\delta_2'$}} 
    \put(5,21){\footnotesize\contour{white}{$\mu_1'$}}
    \put(18.5,27){\footnotesize\contour{white}{$\alpha_1$}}
    \put(16.5,15.6){\footnotesize\contour{white}{$|$}}
    \put(16,13){\footnotesize\contour{white}{$\alpha_2$}}
    \put(75,12.5){\footnotesize\contour{white}{$\lambda_1$}}
    \put(97.5,24){\footnotesize\contour{white}{$\lambda_2$}}
    \put(76,23){\footnotesize\contour{white}{$\delta_2$}}
    \put(73,0){\footnotesize\contour{white}{$\gamma_2$}}
    \put(62,8.5){\footnotesize\contour{white}{$\mu_2$}} 
    \put(75,43){\footnotesize\contour{white}{$\mu_1$}}
    \put(60,35){\footnotesize\contour{white}{$\gamma_1$}}    
    \put(85.5,35){\footnotesize\contour{white}{$\delta_1$}}
    \put(59,18.2){\footnotesize\contour{white}{$\alpha_1$}}
    \put(90,10.4){\footnotesize\contour{white}{$\alpha_2$}}
    \put(84,16.4){\footnotesize\contour{white}{$\beta_1$}}
\end{overpic}
    \caption{Notice that $(\lambda_i, \gamma_i, \mu_i, \delta_i)$ and $(\lambda_i', \gamma_i', \mu_i', \delta_i')$ are complementary to $\pi$ respectively.}
    \label{illus}
\end{figure} 

\begin{figure}[t]
    \centering
    \begin{overpic}[width=0.5\textwidth]{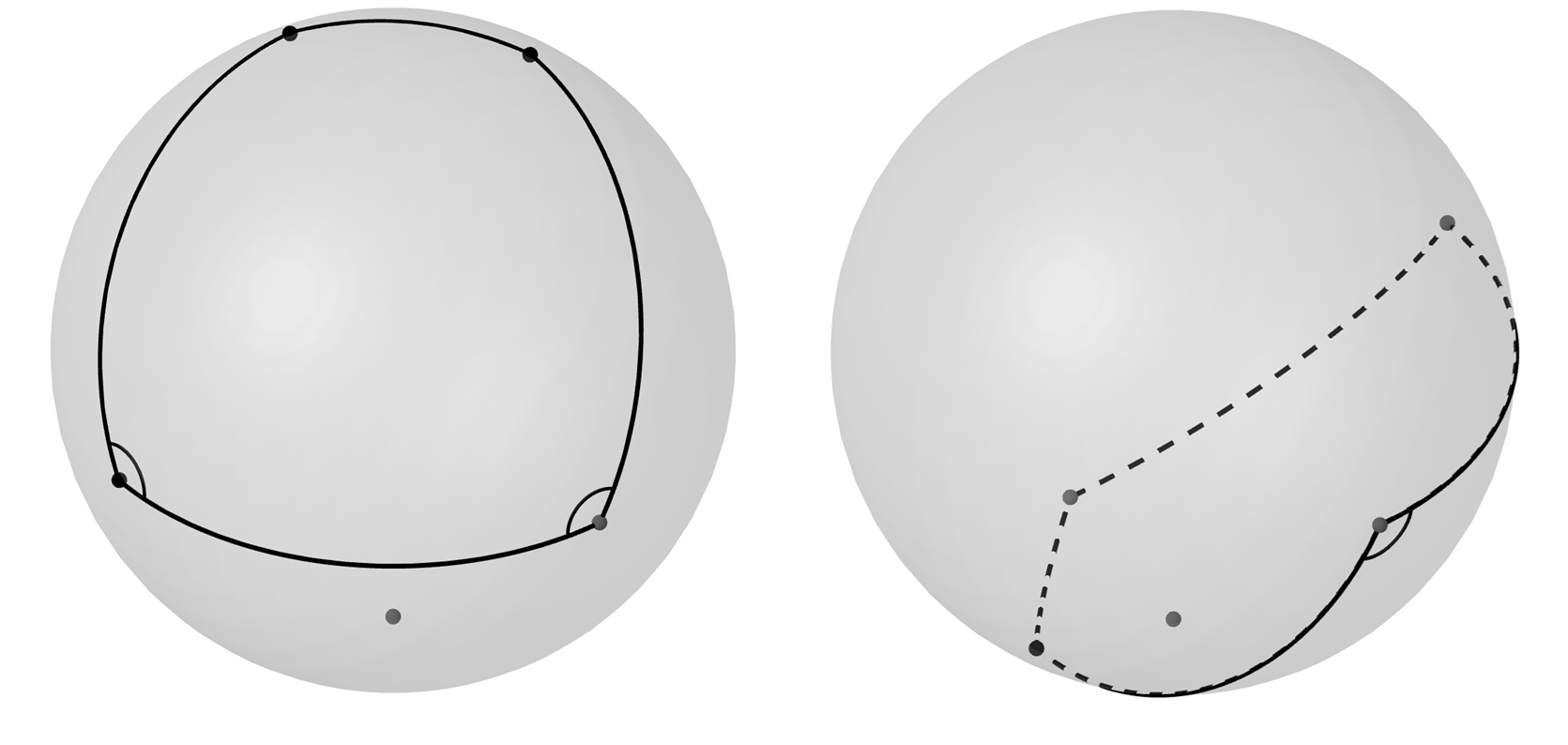}
    \put(20,13){\footnotesize\contour{white}{$\lambda_1$}}
    \put(25,43){\footnotesize\contour{white}{$\mu_1$}}
    \put(10,35){\footnotesize\contour{white}{$\gamma_1$}}    
    \put(35.5,35){\footnotesize\contour{white}{$\delta_1$}}
    \put(9,18.2){\footnotesize\contour{white}{$\alpha_1$}}
    \put(33,16.4){\footnotesize\contour{white}{$\beta_1$}}
    \put(21,7.5){\footnotesize\contour{white}{$S$}}
    \put(97.5,24){\footnotesize\contour{white}{$\lambda_2$}}
    \put(76,23){\footnotesize\contour{white}{$\delta_2$}}
    \put(73,0){\footnotesize\contour{white}{$\gamma_2$}}
    \put(62,9.5){\footnotesize\contour{white}{$\mu_2$}} 
    \put(89,11){\footnotesize\contour{white}{$\alpha_2$}}
    \put(71,7.5){\footnotesize\contour{white}{$S$}}
\end{overpic}
    \caption{This figure is the decomposition of Fig.~\ref{illus} right, $S$ is the south pole.}
    \label{split}
\end{figure} 

In Fig.~\ref{illus} right, we take the common orientation of the sphere where the surface normals are pointing outwards. And in Fig.~\ref{split} left, the oriented spherical quad $Q_1: (\lambda_1, \delta_1, \mu_1, \gamma_1)$ determines its interior---the area which does not contain south pole $S$, hence the dihedral angles $\alpha_1, \beta_1 \in [0, 2\pi)$ are well defined by the interior angles of $Q_1$. However, if we adopt the same orientation on Fig.~\ref{split} right for $Q_2$, its interior would look strange---also the area which does not contain $S$. Therefore, in order to easily represent the relation between angles, we reverse the orientation and set $Q_2: (\lambda_2, \gamma_2, \mu_2, \delta_2)$, the dihedral angles $\alpha_2, \beta_2 \in [0, 2\pi)$ are defined in the same way through the interior angles of $Q_2$.

In general, once we relocated all vectors to the sphere, the spherical quads are oriented as follows:
$$
Q_i=
\left\{\begin{array}{l}
    (\lambda_i, \delta_i, \mu_i, \gamma_i),\,\, i\in \{1,3\}, \\
    (\lambda_i, \gamma_i, \mu_i, \delta_i),\,\, i\in \{2,4\}. \\
\end{array}\right.
$$
The diheral angles $\alpha_i, \beta_i$ are hence defined by the corresponding interior angles of $Q_i$ (notice that $\beta_i=\alpha_{i+1}$). 

When it comes to non-planar meshes (Fig.~\ref{sqmesh}), we keep the same orientation of the sphere and set $\boldsymbol{v_i}$ be the surface normal at the common vertex of $Q_i$ and $Q_{i+1}$. Take Fig.~\ref{squads} as an example, a neighborhood of $Q_1 \cap Q_2$ can be stereographically projected (angle preserving) to its tangent plane (with the same orientation, see Fig.~\ref{sproj}).

\begin{figure}[t]
    \centering
    \begin{overpic}[width=0.25\textwidth]{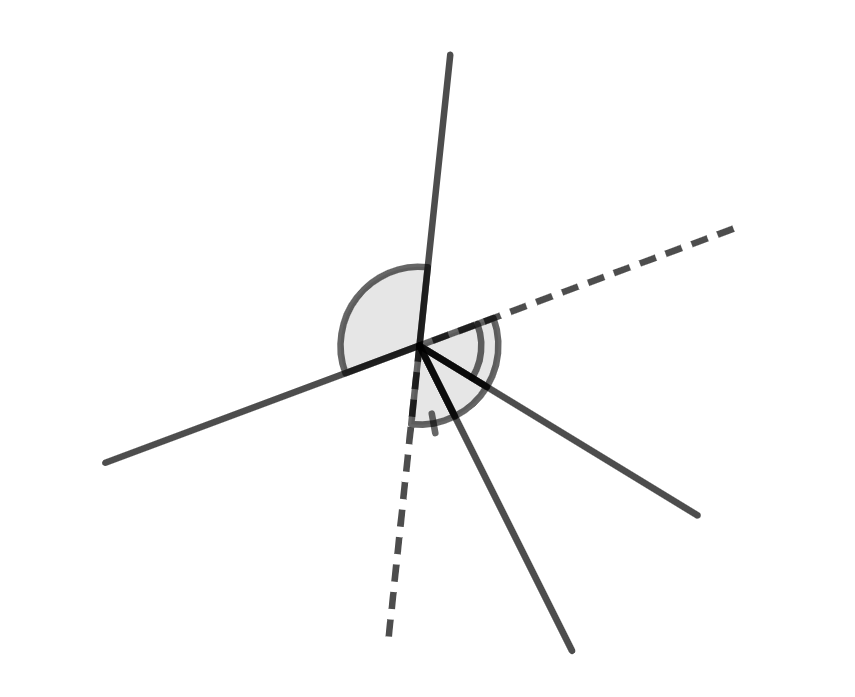}
    \put(15,22){\footnotesize\contour{white}{$\lambda_1$}}
    \put(82,22){\footnotesize\contour{white}{$\lambda_2$}}
    \put(82,48){\footnotesize\contour{white}{$-\lambda_1$}}
    \put(32,10){\footnotesize\contour{white}{$-\delta_1$}}
    \put(62,38){\footnotesize\contour{white}{$\tau_1$}}    
    \put(43,65){\footnotesize\contour{white}{$\delta_1$}}
    \put(55.5,29){\footnotesize\contour{white}{$\alpha_2$}}
    \put(32,46){\footnotesize\contour{white}{$\beta_1$}}
    \put(56,8){\footnotesize\contour{white}{$\gamma_2$}}
    \put(49,25){\footnotesize\contour{white}{$\zeta_1$}}
\end{overpic}
    \caption{Stereographic projection of the neighborhood near the '$\ast$' in Fig.~\ref{squads} left. $\boldsymbol{v_1}$ at the intersecting point is a vector perpendicular to the paper and pointing towards the reader. $-\lambda_1$ and $-\delta_1$, respectively, are the spherical reflections of $\lambda_1$ and $\delta_1$ respect to $\boldsymbol{v_1}$. }
    \label{sproj}
\end{figure} 

Generally, we set $-\lambda_i$ and $-\delta_i$, respectively, are the spherical reflections of $\lambda_i$ and $\delta_i$ respect to $\boldsymbol{v_i}$. Let $T(v_i)$ be the oriented tangent plane with surface normal $\boldsymbol{v_i}$ and introduce the operator
$$
\left\{\begin{array}{l}
     <\cdot,\cdot>_{\boldsymbol{v_i}}: T(\boldsymbol{v_i})^2-\{\boldsymbol{0}\} \rightarrow [0,2\pi);  \\
     <\boldsymbol{x},\boldsymbol{y}>_{\boldsymbol{v_i}}=\text{The counterclockwise rotating angle from $\boldsymbol{x}$ to $\boldsymbol{y}$.} 
\end{array}\right.
$$
The new operator is compatible with existing angles through the following:
$$
\alpha_i=
\left\{\begin{array}{l}
    <\gamma_i,\lambda_i>_{\boldsymbol{-v_{i-1}}},\,\, i\in \{1,3\}, \\
    <\gamma_i,\lambda_i>_{\boldsymbol{v_{i-1}}},\,\, i\in \{2,4\}, \\
\end{array}\right.
\beta_i=
\left\{\begin{array}{l}
    <\delta_i,\lambda_i>_{\boldsymbol{v_i}},\,\, i\in \{1,3\}, \\
    <\delta_i,\lambda_i>_{\boldsymbol{-v_i}},\,\, i\in \{2,4\}. \\
\end{array}\right.
$$
Thereafter, by setting
$$
\tau_i=
\left\{\begin{array}{l}
    <\lambda_{i+1},-\lambda_i>_{\boldsymbol{v_i}},\,\, i\in \{1,3\}, \\
    <\lambda_{i+1},-\lambda_i>_{\boldsymbol{-v_i}},\,\, i\in \{2,4\}, \\
\end{array}\right.
\zeta_i=
\left\{\begin{array}{l}
    <-\delta_i,\gamma_{i+1}>_{\boldsymbol{v_i}},\,\, i\in \{1,3\}, \\
    <-\delta_i,\gamma_{i+1}>_{\boldsymbol{-v_i}},\,\, i\in \{2,4\}. \\
\end{array}\right.
$$
we have $\beta_i\equiv \alpha_{i+1}+\tau_i+\zeta_i \mod 2\pi$ for all $i\in \{1,2,3,4\}$. For instance,
$$
\beta_2=<\delta_2,\lambda_2>_{\boldsymbol{-v_2}}=<-\delta_2,-\lambda_2>_{\boldsymbol{-v_2}}\equiv\zeta_2+\alpha_3+\tau_2 \mod 2\pi.
$$

\section{Technical proofs}\label{techpf}

\begin{proof}(Lemma \ref{rational})
    Firstly, it is easy to check that when $a_i=e_i=0$ or $b_i=c_i=0$, $g^{(i)}$ can be factorized as stated and $kk'\neq 0$ is directly from the non-singularity of $g^{(i)}$.

    Conversely, if neither $a_i=e_i=0$ nor $b_i=c_i=0$, $g^{(i)}$ must be irreducible. We prove this by contradiction. Suppose $g^{(i)}$ is reducible. Definition \ref{dfnsingular} implies $g^{(i)}$ has no factors in $\cc[x_i]$ or $\cc[y_i]$. So any irreducible factor of $g^{(i)}$ must be in the form 
    $$
    px_iy_i+qx_i+ry_i+s,  p\neq 0 \text{ or } q\neq 0.
    $$
    This means, by regarding $x_i$ as the unknown and $y_i$ as a parameter, $g^{(i)}=0$ admits a rational solution $x_i=x_i(y_i)\in \cc(y_i)$. In other words, the discriminant
    $$
    -4a_ic_iy_i^4+(1-4a_ie_i-4b_ic_i)y_i^2-4b_ie_i
    $$
    is a square in $\cc[y_i]$. With this information, we can claim that $a_ic_i\neq 0$. Otherwise, we must have $(1-4a_ie_i-4b_ic_i)=0$ or $b_ie_i=0$. This first one fails due to the inequalities of Definition \ref{dfnpoly} and the second is also impossible given $g^{(i)}$ is non-singular and neither $a_i=e_i=0$ nor $b_i=c_i=0$. So the discriminant can be factorized into $-4a_ic_i(y_i^2-k)(y_i^2-k')$ where $k=k'$ since it is a square. Consequently, 
    $$
    -4a_ic_iy_i^2+(1-4a_ie_i-4b_ic_i)y_i-4b_ie_i=-4a_ic_i(y_i-k)(y_i-k')
    $$
    is a square in $\cc[y_i]$ hence has a zero discriminant
    $$
    (1-4a_ie_i-4b_ic_i)^2-64a_ib_ic_ie_i=0,
    $$
    contradicts to the inequalities in Definition \ref{dfnpoly}.
\end{proof}

\begin{proof}(Corollary \ref{r_ij})
    We first assume $\Tilde{g}^{(i)}$ is reducible. By Lemma \ref{rational} and Proposition \ref{2g_i} we know $\Tilde{f}^{(i)}$ is in first degree of $x_i$ and $x_{i+1}$ so $\Tilde{f}^{(i)}=0$ gives a rational relation between $x_i$ and $x_{i+1}$. In other words, the factorization of $\text{Res}(\Tilde{f}^{(i)},f^{(i+1)}; x_{i+1})$ induces the factorization of $f^{(i+1)}$ hence must be irreducible. The rest proof when $g^{(i+1)}$ is reducible will be the same.
\end{proof}

\begin{proof}(Corollary \ref{noncons})
    Suppose there is an $r(y_{i+1})\in \cc[y_{i+1}]-\cc$ which divides $R_i(x_i,y_{i+1})$. So we have $R_i(x_i,c)=0$ for some constant $c\in \cc$. According to the algebraic meaning of the resultant, $\Tilde{g}^{(i)}(x_i,x_{i+1})$ and $g^{(i+1)}(x_{i+1},c)$, as polynomials in $x_{i+1}$, share a common root almost everywhere for $x_i\in \cc$.\footnote{We need to exclude the values for $x_i$ such that the degree of $x_{i+1}$ in $\Tilde{g}^{(i)}$ degenerates.} Since $\Tilde{g}^{(i)}(x_i,x_{i+1})$ has no factors in $\cc[x_{i+1}]$ (Proposition \ref{2g_i}), by solving $\Tilde{g}^{(i)}=0$, $x_{i+1}$  takes infinitely many values while $x_i$ changes continuously. So $g^{(i+1)}(x_{i+1},c)$ must be a zero polynomial which contradicts to its non-singularity. Similarly, $R_i(x_i,y_{i+1})$ has no factors in $\cc[x_i]$ either.
\end{proof} 

\begin{proof}(Proposition \ref{mani})
    Consider the equivalent forms of $S$ in Proposition \ref{eqcoup}. We first assume none of $g^{(i)}$ is reducible. In $Z(S)$, the values of $y_i$ and $x_{i+1}$ uniquely determine each other through $H^{(i)}=0$. Given $g^{(i)}$ non-singular and $\forall c\in \cp$, $g^{(i)}(x_i,c)=g^{(i)}(c,y_i)=0$ always have finitely many solutions in $x_i$ and $y_i$ respectively. This immediately shows $Z(S)$ is locally a 1-dimensional curve except on
    $$
    W_0:=\bigcup_{i\in \{1,2\} \atop t\in \{x,y\}} Z\left(S+\left(\frac{\partial g^{(i)}}{\partial t_i}\right)\right)
    $$
    which is a finite set given that $g^{(i)}, \frac{\partial g^{(i)}}{\partial t_i}$ are coprime. In particular, $Z(S)-W_0$ is locally a function of any variable of $\{x_1,y_1,x_2,y_2,x_3\}$ since none of the partial derivatives vanishes. As for the component in Proposition \ref{eqcoup}, $W=Z(S+(r_k))$ where $r_k$ is an irreducible factor of $R_1$ and $Z(r_k)\subset (\cp)^2$ is a curve in $(x_1,y_2)$. Since $r_k|R_1$, $R_1$ vanishes on $Z(r_k)$ and therefore $\{\Tilde{g}^{(1)}=g^{(2)}=0\}$ always has a solution in $x_2$ for all $(x_1,y_2)\in Z(r_k)$. This means the projection of $W$ in $(\cp)^2$ is a curve hence, as a subset of $Z(S)-W_0$, $W-W_0$ is locally a 1-dimensional curve and a function of any variable of $\{x_1,x_2,x_3,y_1,y_2\}$. Finally, if one of $g^{(i)}$ is reducible, apply the above argument directly on $W$ (rather than $Z(S)$) to get the same result.
\end{proof}

\begin{proof}(Theorem \ref{flexequi})
    By Proposition \ref{eqcoup} we know $Z(M)\cong Z(S_1)\times_{\{x_1, x_3\}} Z(S_2)$. Clearly, if we project $Z(S_1)$ into $(x_1,x_3)$-plane we get $Z(\text{Res}(\Tilde{g}^{(1)},\Tilde{g}^{(2)};x_2))$. For each $(x_1,x_3)\in Z(\text{Res}(\Tilde{g}^{(1)},\Tilde{g}^{(2)};x_2))$, there are at most two pre-images in $Z(S_1)$ according to Proposition \ref{2g_i}.  So $Z(M)$ is an infinite set if and only if $Z(\text{Res}(\Tilde{g}^{(1)},\Tilde{g}^{(2)};x_2))\cap Z(\text{Res}(\Tilde{g}^{(3)},\Tilde{g}^{(4)};x_4))$ is an infinite set if and only if
    $$
    \gcd(\text{Res}(\Tilde{g}^{(1)},\Tilde{g}^{(2)};x_2),\text{Res}(\Tilde{g}^{(3)},\Tilde{g}^{(4)};x_4))\neq 1.
    $$
    On the other hand, $Z(M)$ is an infinite set if and only if there exist components $W_1, W_2$ of $S_1, S_2$ respectively such that $W_1\times_{\{x_1, x_3\}} W_2$ is an infinite set. In the language of field extensions, it is equivalent to saying that $x_3$ is the same algebraic element on $K_1$ in the extension diagrams of $W_1$ and $W_2$, i.e. the minimal polynomials of $x_3$ on $K_1$ are identical.
\end{proof}

\begin{proof}(Lemma \ref{n-is})
    We only prove (1) since (2) is quite similar. Symbolic computation can show
    $$
    \frac{a_1}{a_2}=\frac{b_1}{c_2}=\frac{b_2}{c_1}=\frac{e_2}{e_1}=k \Rightarrow (x_3-kx_1)^2|R(x_1,x_3) \Rightarrow (x_3-kx_1)|R(x_1,x_3).
    $$
    Conversely, when $(x_3-kx_1)|R(x_1,x_3)$, $S$ admits a component
    $$
    W=Z(g^{(1)}, g^{(2)}, x_3-kx_1, H^{(1)}, H^{(2)})
    $$
    with the extension diagram
    $$
    \scalebox{0.75}{\xymatrix{
                & K_1(x_2,x_3) & \\
                K_1(x_2) \ar@{=}[ur]^{g^{(2)}} & & K_1(x_3)\ar@{-}[ul]\\
                & K_1\ar@{-}[ul]^{g^{(1)}} \ar@{=}[ur]_{x_3-kx_1}& \\
            }}   
    $$
    in which $x_3=kx_1$ and $x_2\notin K_1$ due to the irreducibility of $g^{(1)}$.  Thus
    $$
    g^{(1)}(x_1,t)=(a_1 x_1^2 + c_1 ) t^2 + x_1 t + (b_1 x_1^2+ e_1),
    $$
    $$
    g^{(2)}(t,x_3)=(a_2 k^2 x_1^2 + b_2 ) t^2 + k x_1 t + (c_2 k^2 x_1^2+ e_2)
    $$
    both determine the minimal polynomial of $x_2$ on $K_1$ so the coefficients must be proportional:
    $$
    \frac{a_1}{a_2}=\frac{b_1}{c_2}=\frac{b_2}{c_1}=\frac{e_2}{e_1}=k.
    $$
\end{proof}

\begin{proof}(Lemma \ref{samemini})
    First, we assume system (\ref{eq-c5}) holds so we can claim that all coefficients are nonzero. This is because zeros always come in pairs in the system, which is against the non-singularity of $g^{(i)}$. Hence by Lemma \ref{rational}, $g^{(1)}, g^{(2)}$ must be irreducible. Set 
    $$
    b_1=ka_1,\,\, e_1=kc_1,\,\,c_2=ka_2,\,\, e_2=kb_2,
    $$
    symbolic computation can show
    $$
    \text{Res}(g^{(1)},g^{(2)};x_2)=k(a_2 x_1 x_3^2 - (a_1 x_1^2 + c_1) x_3 + b_2 x_1)^2.
    $$
    In addition, $a_2 x_1 x_3^2 - (a_1 x_1^2 + c_1) x_3 + b_2 x_1$ must be irreducible. Otherwise, according to Corollary \ref{noncons}, there would be an irreducible factor $r_k$ which is linear in $x_3$ and further leads to a component of \textbf{Case 3} in Definition \ref{casedefn}. Apply Corollary \ref{n-is-c} we have one of systems (\ref{eq-c3}) holds. Either of them contradicts to the inequality of system (\ref{eq-c5}). Therefore $S$ only has one component
    $$
    W=Z(g^{(1)}, g^{(2)}, a_2 x_1 x_3^2 - (a_1 x_1^2 + c_1) x_3 + b_2 x_1, H^{(1)}, H^{(2)})
    $$
    which belongs to \textbf{Case 4} or \textbf{Case 5} but not \textbf{Case 3}. Similarly, $W$ must be of \textbf{Case 5}. Otherwise, by Lemma \ref{samedsc}, we have 
    $$
    \frac{a_1 c_1}{a_2 b_2}=\frac{1- 4 a_1 e_1- 4 b_1 c_1}{1 - 4 a_2 e_2- 4 b_2 c_2}=\frac{b_1 e_1}{c_2 e_2}=\frac{1- 8k a_1 c_1}{1 - 8k a_2 b_2}=\frac{1- 8k a_1 c_1+8k a_1 c_1}{1 - 8k a_2 b_2+8k a_2 b_2}=1.
    $$
    Contradicts to our previous discussion considering Corollary \ref{n-is-c}.

    Conversely, suppose $S$ has a component $W$ of \textbf{Case 5} and consider its extension diagram. By Definition \ref{casedefn}, we have $x_3\notin K_1(x_2)$ which implies $x_2\notin K_1(x_3)$ given $[K_1(x_3):K_1]=[K_1(x_2):K_1]=2$. This means both $g^{(1)}$ and $g^{(2)}$ determine the minimal polynomial of $x_2$ on $K_1(x_3)$ hence the coefficients should be proportional, which brings us back to the left one of equation (\ref{eq-coeff}). i.e.
    \begin{equation}\label{eqsamemini}
        \left\{\begin{array}{l}
                            a_2 x_1 x_3^2 - (a_1 x_1^2 + c_1) x_3 + b_2 x_1 = 0, \\
                            c_2 x_1 x_3^2 - (b_1 x_1^2 + e_1) x_3 + e_2 x_1 = 0. \\ 
                \end{array}\right.
    \end{equation}
    Since $[K_1(x_3):K_1]=2$, the above two equations both determine the minimal polynomial of $x_3$ on $K_1$ so
    $$
    \frac{a_1}{b_1}=\frac{c_1}{e_1}=\frac{a_2}{c_2}=\frac{b_2}{e_2}.
    $$
    Finally, the irreducibility of equation (\ref{eqsamemini}) implies $\frac{a_1}{a_2}\neq\frac{b_2}{c_1}$ since
    $$
    \frac{a_1}{a_2}=\frac{b_2}{c_1}\Rightarrow a_2 x_1 x_3^2 - (a_1 x_1^2 + c_1) x_3 + b_2 x_1=\frac{1}{a_2}(a_2x_1x_3-c_1)(a_2x_3-a_1x_1).
    $$
\end{proof}

\begin{proof}(Lemma \ref{samedsc-g'})
    We first assume $S$ is equimodular. By Lemma \ref{samedsc-g}, we only need to focus on the equivalent condition for '$\exists f\in K_2$ such that $f^2=\frac{\Delta_1}{\Delta_2}$' where $\Delta_i$ are given by equation (\ref{d2}).
    
    We can claim that $a_1b_1c_1e_1a_2b_2c_2e_2\neq 0$. According to $f^2=\frac{\Delta_1}{\Delta_2}$, $\Delta_1$ and $\Delta_2$ are equal after removing their square factors. In the meanwhile, we know from the proof of Lemma \ref{samedsc} that $\Delta_2$ has 4 distinct roots if and only if $a_2b_2c_2e_2\neq 0$. Thus $a_1b_1c_1e_1=0\Rightarrow a_2b_2c_2e_2=0$. When $a_1b_1c_1e_1\neq 0$, with the same reason,
    $$
    \Delta_1'(y_1)=-4 a_1 c_1 y_1^4 + (1 - 4 a_1 e_1- 4 b_1 c_1) y_1^2 - 4 b_1 e_1=-4 a_1 c_1(y_1-\xi)(y_1+\xi)(y_1-\eta)(y_1+\eta).
    $$
    has 4 distinct roots. Notice that $\Delta_1(x_2)=(1-F_1 x_2)^4\Delta_1'\left(\frac{x_2+F_1}{1-F_1 x_2}\right)$. Therefore $\Delta_1$ has at least 3 distinct roots,\footnote{Number 3 occurs only if $\Delta_1'(\frac{-1}{F_1})=0$.} which implies the degree of $x_2$ in $\Delta_2$ is at least 3, i.e. $a_2b_2c_2e_2\neq 0$, and hence $a_1b_1c_1e_1=0 \Leftrightarrow a_2b_2c_2e_2=0$. Next we need to show that $a_1b_1c_1e_1=0$ cannot happen, otherwise $a_2b_2c_2e_2=0$ and
    $$
    \Delta_2=x_2^2(-4 a_2 b_2 x_2^2 + (1 - 4 a_2 e_2- 4 b_2 c_2)) \text{ or } (1 - 4 a_2 e_2- 4 b_2 c_2) x_2^2 - 4 c_2 e_2.
    $$
    Clearly, if we ignore the square factor, there is no linear term in $\Delta_2$. On the other hand,
    $$
    \Delta_1=\left\{\begin{array}{l}
    (1-F_1x_2)^2[(1 - 4 a_1 e_1- 4 b_1 c_1) (x_2+F_1)^2 - 4 b_1 e_1 (1-F_1x_2)^2] \text{ or}\\
    (x_2+F_1)^2[-4 a_1 c_1 (x_2+F_1)^2 + (1 - 4 a_1 e_1- 4 b_1 c_1) (1-F_1x_2)^2]. \\
    \end{array}\right.
    $$
    Likewise, there should be no linear term in
    $$
    \left\{\begin{array}{l}
    (1 - 4 a_1 e_1- 4 b_1 c_1) (x_2+F_1)^2 - 4 b_1 e_1 (1-F_1x_2)^2 \text{ or}\\
    -4 a_1 c_1 (x_2+F_1)^2 + (1 - 4 a_1 e_1- 4 b_1 c_1) (1-F_1x_2)^2 \\
    \end{array}\right.
    $$
    since the non-square factors should vanish in the fraction $\frac{\Delta_1}{\Delta_2}=f^2$. In either of the cases, we have $(a_1-b_1)(e_1-c_1)=\frac{1}{4}$ which contradicts to Definition \ref{dfnpoly}. So we can conclude $a_1b_1c_1e_1a_2b_2c_2e_2\neq 0$.
    
    From $a_1b_1c_1e_1a_2b_2c_2e_2\neq 0$ we know that both $\Delta_1$ and $\Delta_2$ are in degree 4. Since $\Delta_2$ has distinct roots, $f^2=\frac{\Delta_1}{\Delta_2}$ implies $\frac{\Delta_1}{\Delta_2}\in \cc$. So, like $\Delta_2$, there should be no linear or cubic term in $\Delta_1$, which brings us
    \begin{equation}\label{a=bb=a}
        \left\{\begin{array}{l}
                (1-4a_1e_1-4b_1c_1+8a_1c_1)=F_1^2(1-4a_1e_1-4b_1c_1+8b_1e_1), \\
                (1-4a_1e_1-4b_1c_1+8a_1c_1)F_1^2=(1-4a_1e_1-4b_1c_1+8b_1e_1). \\ 
                \end{array}\right.
    \end{equation}
    Given $F_1^2\neq -1$, we must have 
    $$
    (1-4a_1e_1-4b_1c_1+8a_1c_1)=(1-4a_1e_1-4b_1c_1+8b_1e_1),
    $$
    i.e. $\frac{a_1}{e_1}=\frac{b_1}{c_1}$. Therefore equation (\ref{a=bb=a}) is equivalent to 
    $$
    (4(a_1-b_1) (e_1-c_1)-1)=(4(a_1-b_1) (e_1-c_1)-1)F_1^2
    $$
    where $F_1=\pm 1$ follows (see the inequalities in Definition \ref{dfnpoly}). Besides, the coefficients of even-degree terms in $\Delta_1$ and $\Delta_2$ should be proportional
    \begin{equation}\label{-ratio}
        \frac{1- 4 a_1 e_1- 4 b_1 c_1-8 a_1 c_1}{-4a_2 b_2}=\frac{8 a_1 e_1+8 b_1 c_1-48 a_1 c_1-2}{1-4a_2e_2-4b_2 c_2}=\frac{1- 4 a_1 e_1- 4 b_1 c_1-8 a_1 c_1}{-4c_2 e_2}    
    \end{equation}
    hence $\frac{a_2}{e_2}=\frac{c_2}{b_2}$. Finally, the above's first equality is equivalent to the last equation of system (\ref{eq-c4-g}). 
    
    Conversely, it is easy to verify that when system (\ref{eq-c4-g}) holds, $\exists f\in \cc \subset K_2$ such that $f^2=\frac{\Delta_1}{\Delta_2}$ so $S$ is equimodular by Lemma \ref{samedsc-g}.
\end{proof}

\begin{proof}(Lemma \ref{n-is-g})
    The statement has already been proved in Theorem \ref{PR} for components of \textbf{Case 1}. Suppose the component is of \textbf{Case 3}, so $\Tilde{g}^{(1)}, g^{(2)}$ are irreducible. For pseudo-planar type, this lemma was contained in the proof of Corollary \ref{n-is-c} so we only consider the case when $F_1 \notin \{0, \infty\}$. Since $y_2\in K_1$, we may assume $y_2=\frac{f}{g}$ where $f,g\in \cc[x_1]$ and $\gcd(f,g)=1$. Due to the uniqueness of the minimal polynomial of $x_2$ on $K_1$, the coefficients in $\Tilde{g}^{(1)}$ and $g^{(2)}$ should be proportional
    \begin{equation}\label{eq-ratio-g''}
    \frac{h_2(x_1)}{a_2 y_2^2 + b_2}=\frac{h_1(x_1)}{y_2}=\frac{h_0(x_1)}{c_2 y_2^2 + e_2}\Leftrightarrow \frac{h_2(x_1)}{a_2 f^2 + b_2 g^2}=\frac{h_1(x_1)}{f g}=\frac{h_0(x_1)}{c_2 f^2 + e_2 g^2}
    \end{equation}
    where $h_i(x_1)$ are given in equation (\ref{h012}). From Corollary \ref{redu=iso} we know at most one of $\{a_2,...,e_2\}$ can be zero so $\gcd(a_2 f^2 + b_2 g^2,f g,c_2 f^2 + e_2 g^2)=1$. On the other hand, by Proposition \ref{2g_i} we also have $\gcd(h_2,h_1,h_0)=1$.
    Thus the right chain of (\ref{eq-ratio-g''}) is equal to a nonzero constant which implies the degrees of $f$ and $g$ are at most 1, i.e. $y_2=\frac{px_1+q}{rx_1+s}$.
    Finally, $y_2$ is not a constant according to the left chain of (\ref{eq-ratio-g''}) so $ps-qr\neq 0$.
\end{proof}

\begin{proof}(Proposition \ref{real-sol})
    The first step is to exclude general type. Suppose $F_1\notin \{0, \infty\}$. In the extension diagram of $W$, we take $x_2$ as the parameter and adapt the result in Lemma \ref{samedsc-g'}. Given $a_1c_1=b_1e_1$ we can conclude that 
    \begin{equation}\label{+1}
    1- 4 a_1 e_1- 4 b_1 c_1-8 a_1 c_1>0.
    \end{equation}
    In fact, recall Definition \ref{dfnpoly} we have
    $$
    1- 4 a_1 e_1- 4 b_1 c_1+8 a_1 c_1=1- 4(a_1-b_1)(e_1-c_1)>0 \Rightarrow 1- 4 a_1 e_1- 4 b_1 c_1>-8 a_1 c_1.
    $$
    The opposite of equation (\ref{+1}) leads to $1- 4 a_1 e_1- 4 b_1 c_1\leq 8 a_1 c_1$ so
    $$
    (1- 4 a_1 e_1- 4 b_1 c_1)^2 \leq (8 a_1 c_1)^2 =64a_1b_1c_1e_1,
    $$
    contradicts to Definition \ref{dfnpoly}. Similarly, $1- 4 a_2 e_2- 4 b_2 c_2-8 a_2 b_2>0$ must hold given $a_2b_2=c_2e_2$. In the meanwhile, Lemma \ref{samedsc-g'} also requires 
    $$
    (1-4a_1e_1-4b_1c_1-8a_1c_1)(1-4a_2e_2-4b_2c_2-8a_2b_2)=(16a_1c_1)(16a_2b_2)>0.
    $$
    Symmetrically, we may assume $(1-4a_1e_1-4b_1c_1-8a_1c_1)\leq |16a_1c_1|$ so 
    $$
    (1-4a_1e_1-4b_1c_1-8a_1c_1)^2-(16a_1c_1)^2=(1- 4 a_1 e_1- 4 b_1 c_1+8 a_1 c_1)(1-4a_1e_1-4b_1c_1-24a_1c_1)\leq 0.
    $$
    Again, by Definition \ref{dfnpoly}  we know $1- 4 a_1 e_1- 4 b_1 c_1+8 a_1 c_1>0$ so $1-4a_1e_1-4b_1c_1-24a_1c_1\leq 0$ which indicates $a_1c_1 >0$ and hence $a_2b_2>0$. Now we go back to the proof of Lemma \ref{samedsc-g'}, equation (\ref{-ratio}) tells us $\frac{\Delta_1}{\Delta_2}<0$ which means $x_1$ and $x_3$ cannot be both real except for finitely many values of $x_2$. Thus $F_1\in \{0, \infty\}$.
    
    The next step is to exclude the last two of systems (\ref{eq-c3or4}). We may assume $F_1=0$ since the proof for $F_1=\infty$ is quite similar. Suppose $b_1e_1=a_2b_2=0$, so
    $$
    (1-4a_1e_1-4b_1c_1)(1-4a_2e_2-4b_2c_2)=(4a_1c_1)(4c_2e_2).
    $$
    Symmetrically, we may assume $|1-4a_1e_1-4b_1c_1|\leq |4a_1c_1|$ and $|1-4a_2e_2-4b_2c_2|\geq |4c_2e_2|$. Again, recall Definition \ref{dfnpoly} we have
    $$
    1- 4 a_1 e_1- 4 b_1 c_1+4 a_1 c_1+0=1- 4(a_1-b_1)(e_1-c_1)>0 \Rightarrow 1- 4 a_1 e_1- 4 b_1 c_1>-4 a_1 c_1,
    $$
    $$
    \left.\begin{array}{l}
        1- 4 a_1 e_1- 4 b_1 c_1>-4 a_1 c_1,\\
        |1-4a_1e_1-4b_1c_1|\leq |4a_1c_1|.\\         
    \end{array}\right\} \Rightarrow 4a_1c_1>0.
    $$
    Same technique can show $1-4a_2e_2-4b_2c_2>0$ which lead to $\frac{\Delta_1}{\Delta_2}=\frac{-4a_1c_1x_2^2}{1-4a_2e_2-4b_2c_2}\leq 0$. So there are only finitely many real points in $Z(S)$ when $b_1e_1=a_2b_2=0$. For the same reason, we cannot have $a_1c_1=c_2e_2=0$ either.
    
    Finally, since $x_1, y_2$ should be real at the same time, we must have 
    $$
    \frac{\Delta_1}{\Delta_2}=\frac{a_1 c_1}{a_2 b_2}=\frac{1- 4 a_1 e_1- 4 b_1 c_1}{1 - 4 a_2 e_2- 4 b_2 c_2}=\frac{b_1 e_1}{c_2 e_2}>0.
    $$
\end{proof}

\begin{proof}(Lemma \ref{samemini-g})
    The approach is quite similar to Lemma \ref{samemini}. We first suppose $S$ only admits components of \textbf{Case 5}. Hence in every the extension diagram, we have $x_2 \notin K_1(y_2)$ given $K_1(y_2)\neq K_1(x_2,y_2)$. Since the minimal polynomial of $x_2$ on $K_1(y_2)$ is unique, it can be obtained from both equalities of the left chain of (\ref{eq-ratio-g''}), i.e.
    $$
    \left\{\begin{array}{l}
        a_2 h_1(x_1) y_2^2 - h_2(x_1) y_2 + b_2 h_1(x_1) = 0,\\
        c_2 h_1(x_1) y_2^2 - h_0(x_1) y_2 + e_2 h_1(x_1) = 0.\\         
    \end{array}\right.
    $$
    This immediately gives
    $$
    \frac{a_2}{c_2}=\frac{h_2(x_1)}{h_0(x_1)}=\frac{b_2}{e_2}\in \cc.
    $$
    According to the linear terms of $h_2$ and $h_0$, these identical ratios are simply $-1$. Similarly, from the rest coefficients of $h_2$ and $h_0$ we obtain
    $$
    \frac{a_1}{b_1}=\frac{c_1}{e_1}=\frac{a_2}{c_2}=\frac{b_2}{e_2}=-1.
    $$
    In addition, when $F_1=\pm 1$, in order to keep the component away from \textbf{Case 3} and \textbf{Case 4}, we need to break system (\ref{eq-c4-g}), i.e. $256a_1c_1a_2b_2\neq 1$.

    Conversely, if one of systems (\ref{eq-c5-g}) holds, simple calculation shows
    $$
    \text{Res}(\Tilde{g}^{(1)},g^{(2)};x_2)=-(a_2 h_1(x_1) y_2^2 - h_2(x_1) y_2 + b_2 h_1(x_1))^2.
    $$
    Gien Lemma \ref{samedsc-g'}, $S$ cannot have components of \textbf{Case 3} or \textbf{Case 4} since each one of systems (\ref{eq-c5-g}) is against systems (\ref{eq-c4-g}). As a consequence, $a_2 h_1(x_1) y_2^2 - h_2(x_1) y_2 + b_2 h_1(x_1)$ is irreducible and quadratic in $y_2$, i.e. $S$ only admits a component of \textbf{Case 5}.
\end{proof}

\begin{proof}(Theorem \ref{symext})
    Since the theorem becomes trivial when $i=j$, we first assume $i=1, j=2$. According to Lemma \ref{rational} and Proposition \ref{2g_i}, $[K_1(x_{2}):K_1]=1$ if and only if $\Tilde{g}^{(1)}$ is reducible if and only if $[K_2(x_1):K_2]=1$. Since $[K_1(x_{2}):K_1]$ and $[K_2(x_1):K_2]$ are maximum 2, we also have $[K_1(x_{2}):K_1]=2 \Leftrightarrow [K_2(x_1):K_2]=2$. So $[K_i(x_j):K_i]=[K_j(x_i):K_j]$ always holds for adjacent pairs $(i,j)$. 
    
    Now we consider the case $i=1, j=3$. Lemma \ref{symrational} tells us $[K_1(x_3):K_1]=1 \Leftrightarrow [K_3(x_1):K_3]=1$. Further, given $[K_1(x_3):K_1]\in \{1, 2, 4\}$, we only need to prove $[K_1(x_3):K_1]=2 \Leftrightarrow [K_3(x_1):K_3]=2$ so that $[K_1(x_3):K_1]=4 \Leftrightarrow [K_3(x_1):K_3]=4$ holds automatically.
    
    For \textbf{Case 2}, which is defined in a symmetric way in Definition \ref{casedefn}, we always have $[K_1(x_3):K_1]=[K_1(y_2):K_1]=[K_3(x_1):K_3]=2$. For \textbf{Case 5}, Lemma \ref{samemini} and \ref{samemini-g} provides irreducible annihilating polynomials of $(x_1,y_2)$ which are quadratic in both $x_1, y_2$ hence $[K_3(x_1):K_3]=2$. As for \textbf{Case 4}, $g^{(2)}$ is irreducible so $[K_3(x_2):K_3]=2$. By Lemma \ref{samedsc-g}, $x_1 \in K_3(x_2)$. Again, by Lemma \ref{symrational}, $[K_1(x_3):K_1]=2 \Rightarrow x_3 \notin K_1 \Leftrightarrow x_1 \notin K_3$ and $[K_3(x_1):K_3]=2$ follows.
    
    We just proved $[K_1(x_3):K_1]=2 \Rightarrow [K_3(x_1):K_3]=2$. Conversely, if we reverse all the arguments and apply them on $(\Tilde{g}^{(2)},\Tilde{g}^{(1)})$, we will obtain $[K_3(x_1): K_3]=2 \Rightarrow [K_1(x_3): K_1]=2$ and conclude that $[K_3(x_1): K_3]=2 \Leftrightarrow [K_1(x_3): K_1]=2$, hence $[K_i(x_j): K_i]=[K_j(x_i): K_j]$ for all $1\leq i, j \leq 3$.
    
    Finally, take \textbf{Case 2} of Definition \ref{casedefn} as an example. It was defined in a symmetric way based on the extension diagrams with respect to $x_1$ and $x_3$. It is not hard to realize that, for a component of any other \textbf{Case}, if we consider its extension diagram with respect to $x_3$ rather than $x_1$, the extension degrees will lead us to exactly the same \textbf{Case} as it was. This proves the last statement of the theorem.
\end{proof}

\begin{proof}(Proposition \ref{4dg})
    Rewrite $\Tilde{g}^{(1)}$ as $h_2(x_1)x_2^2+h_1(x_1)x_2+h_0(x_1)$. The term of $R_1$ in the highest degree of $y_2$ is $((c_2 h_2-a_2 h_0)^2+a_2c_2h_1^2)y_2^4$. Now suppose the coefficient vanishes, so it is either $c_2 h_2-a_2 h_0-\sqrt{-a_2c_2}h_1=0$ or $c_2 h_2-a_2 h_0+\sqrt{-a_2c_2}h_1=0$.\footnote{$\sqrt{-a_2c_2}\in\{re^{i\theta}:r\geq 0, \theta \in [0,\pi)\}$.} Clearly, $a_2, c_2$ cannot be both zero since $g^{(2)}$ is non-singular.  Also, according to Proposition \ref{2g_i}, none of $h_2, h_0$ could be zero. Therefore $a_2, c_2$ are both nonzero and hence $h_1$ is a linear combination of $h_2$ and $h_0$. This is obviously impossible when $F_1$ is 0 or $\infty$. So we only need to consider the case when $0 \neq F_1 \in \rr$ ($h_i$ goes as equation (\ref{h012})), which can be converted to an eigenvector problem
    $$
    \left (\begin{array}{cccc}
    b_1 F_1^2 +a_1 &2 F_1 (a_1 - b_1) & a_1 F_1^2 +b_1 \\
    -F_1 & 1 - F_1^2 & F_1   \\
    e_1 F_1^2 +c_1 & 2 F_1 (c_1 - e_1) & c_1 F_1^2 +e_1 \\
    \end{array}\right)
    \left (\begin{array}{cccc}
    c_2 \\
    \pm \sqrt{-a_2c_2}   \\
    -a_2 \\
    \end{array}\right)=0.
    $$
    It is not hard to get the determinant $(1+F_1^2)^3(a_1 e_1-b_1 c_1)$ which should be 0. Given $F_1^2 \neq -1$, $(a_1, b_1)$ and $(c_1, e_1)$ must be linear dependent. On the other hand, set
    $$
    \left\{\begin{array}{l}
    \Tilde{h}_2(x)=(b_1 F_1^2 +a_1) x^2 + 2 F_1 (a_1 - b_1) x + a_1 F_1^2 +b_1, \\
    \Tilde{h}_1(x)=-F_1x^2+(1 - F_1^2)x + F_1=(x+F_1)(1-F_1x). \\
    \end{array}\right.
    $$
    The eigenvector gives a common solution $x=\pm \sqrt{-c_2/a_2}$ of $\Tilde{h}_2(x)=0$ and $\Tilde{h}_1(x)=0$. According to the factors of $\Tilde{h}_1(x)$, we have either $\Tilde{h}_2(-F_1)=0$ or $\Tilde{h}_2(1/F_1)=0$, which implies $b_1=0$ or $a_1=0$. Recall $(a_1, b_1)$ and $(c_1, e_1)$ are collinear, we have either $b_1=e_1=0$ or $a_1=c_1=0$, contradicts to the non-singularity of $\Tilde{g}^{(1)}$. Therefore, $R_1(x_1,y_2)$ is in 4th degree of $y_2$. Finally, by Corollary \ref{samedg}, $R_1(x_1,y_2)$ is in 4th degree of $x_1$ as well.
\end{proof}

\begin{proof}(Theorem \ref{factor}, complementary)
    Suppose $F_1\notin \{0,\infty\}$ and $W$ is a component of \textbf{Case 3}. Lemma \ref{n-is-g} says $y_2=\frac{px_1+q}{rx_1+s}$ on $W$ which implies $(rx_1 y_2 - px_1 + sy_2 - q)|R_1$. In particular, if $S$ is rational-quadratic, $R_1$ should also have an irreducible factor which is quadratic in $x_1$ and $y_2$. Hence the only thing left is to prove that 
    $$
    (rx_1 y_2 - px_1 + sy_2 - q)|R_1\Rightarrow (rx_1 y_2 - px_1 + sy_2 - q)^2|R_1.
    $$
    Given $S$ is equimodular, we adopt system (\ref{eq-c4-g}) and equation (\ref{h012}) so
    $$
    \left\{\frac{e_1}{a_1}=\frac{c_1}{b_1}=\omega_1^2, \frac{e_2}{a_2}=\frac{b_2}{c_2}=\omega_2^2, F_1=\pm 1, h_1=2F_1(a_1-b_1)(x_1-\omega_1)(x_1+\omega_1)\right\}.
    $$
    We are using $\{a_1,b_1,\omega_1,\omega_2\}$ to reparametrize the others so the signs of $\omega_i$ are not our concern.
    Review the proof of Lemma \ref{n-is-g} we have $\frac{h_1(x_1)}{(px_1+q)(rx_1+s)}\in\cc$ hence $y_2=k\frac{x_1-\omega_1}{x_1+\omega_1}$. Plug $f=k(x_1-\omega_1), g=(x_1+\omega_1)$ into equation (\ref{eq-ratio-g''}) we obtain
    $$
    \left\{k=\pm \omega_2, a_2=\frac{2\omega_1(a_1+b_1)+1}{8k\omega_1(a_1-b_1)},c_2=\frac{2\omega_1(a_1+b_1)-1}{8k\omega_1(a_1-b_1)}\right\}.
    $$
    Now symbolic computation is able to factorize $R_1$ after all coefficients being parametrized by $\{a_1,b_1,\omega_1,\omega_2\}$.
\end{proof}

\begin{proof}(Proposition \ref{IQpseu})
    Suppose $S_i$ is involutive-quadratic so its valid component must be of \textbf{Case 5}. From Lemma \ref{samemini} we know that the irreducible factor of $\Tilde{R}^{(i)}$ is quadratic in both $x_1, x_3$ and only contains odd-degree terms. On the contrary, going through pseudo-planar couplings of other classes, the irreducible factors of the resultant, if quadratic, only contain even-degree terms (see Example \ref{HQ} and Corollary \ref{even-term}). Hence the other coupling must be involutive-quadratic as well.
\end{proof}

\end{document}